\newcommand{\mm}{\mathfrak m}
\newcommand{\Z}{\mathbb{Z}}
\newcommand{\N}{\mathbb{N}}
\newcommand{\Q}{\mathbb{Q}}
\newcommand{\PC}{\mathbb{P}}
\newcommand{\Fc}{\mathcal{F}}
\newcommand{\xb}{{\bf x}}
\newcommand{\yb}{{\bf y}}
\DeclareMathOperator{\pnt}{\raise 0.5mm \hbox{\large\bf.}}
\DeclareMathOperator{\codim}{codim}
\DeclareMathOperator{\depth}{depth}
\DeclareMathOperator{\Img}{Im}
\DeclareMathOperator{\gr}{gr}
\DeclareMathOperator{\chara}{char}
\DeclareMathOperator{\Tor}{Tor}
\DeclareMathOperator{\Ass}{Ass}
\DeclareMathOperator{\lind}{ld}
\DeclareMathOperator{\glind}{glld}
\DeclareMathOperator{\linp}{lin}
\DeclareMathOperator{\Ker}{Ker}
\DeclareMathOperator{\embdim}{emb\,dim}
\DeclareMathOperator{\reg}{reg}
\DeclareMathOperator{\flatdim}{fd}
\DeclareMathOperator{\projdim}{pd}
\DeclareMathOperator{\ini}{in}
\DeclareMathOperator{\rk}{rank}
\DeclareMathOperator{\Sym}{Sym}
\DeclareMathOperator{\HF}{HF}
\DeclareMathOperator{\height}{height}
\def\+#1{\relax\ifmmode\if\noexpand #1\relax \mathop{\kern
    0pt^+{#1}}\nolimits\else \kern 0pt^+\!#1 \fi\else$^*$#1\fi}
\let\phi=\varphi
\newtheorem{thm}{\bf Theorem}[section]
\newtheorem{lem}[thm]{\bf Lemma}
\newtheorem{cor}[thm]{\bf Corollary}
\newtheorem{prop}[thm]{\bf Proposition}
\newtheorem{quest}[thm]{\bf Question}
\theoremstyle{definition}
\newtheorem{defn}[thm]{\bf Definition}
\theoremstyle{plain}
\newtheorem*{thm*}{Theorem}
\newtheorem*{lem*}{Lemma}
\newtheorem*{cor*}{Corollary}
\newtheorem*{claim*}{Claim}
\newtheorem*{defn*}{Definition}
\theoremstyle{remark}
\newtheorem{rem}[thm]{Remark}
\newtheorem{ex}[thm]{Example}
\title[Absolutely Koszul algebras and the Backelin-Roos property]
{Absolutely Koszul algebras and \\ the Backelin-Roos property}
\author[Conca]{Aldo Conca}
\address{Dipartimento di Matematica, Universit\`{a} di Genova, Via Dodecaneso 35, 16146 Genova,
Italy} \email{conca@dima.unige.it}
\author[Iyengar]{Srikanth B. Iyengar}
\address{Department of Mathematics, University of Utah, Salt Lake City, UT 84112, USA}
\email{iyengar@math.utah.edu}
\author[Nguyen]{Hop D. Nguyen}
\address{Dipartimento di Matematica, Universit\`a di Genova, Via Dodecaneso 35, 16146 Genoa, Italy}
\email{ngdhop@gmail.com}
\author[R\"omer]{Tim R\"omer}
\address{Universit\"at Osnabr\"uck, Institut f\"ur Mathematik, 49069 Osnabr\"uck, Germany}
\email{troemer@uos.de}
\thanks{Iyengar was partly supported by National Science Foundation grant DMS-1201889. Nguyen is grateful to the CARIGE foundation for support.}
\begin{document}

\dedicatory{Dedicated to Ng\^o Vi\d{\^e}t Trung on the occasion of his $60^{th}$ birthday.}

\begin{abstract} We study absolutely Koszul algebras, Koszul algebras with the Backelin-Roos property and their behavior under standard algebraic operations. In particular,  we identify some Veronese subrings of  polynomial rings that have the Backelin-Roos property and conjecture that the list is indeed complete. Among other things, we prove that every universally Koszul ring defined by monomials has the Backelin-Roos property. 
\end{abstract}

\maketitle

\section{Introduction}
\label{sect_intro}
Let $k$ be a field and $R$ a  standard graded $k$-algebra.  Recall that $R$ is said to be \emph{Koszul} if $k$, viewed as an $R$-module via the canonical augmentation $R\to k$,  has a linear free resolution.  This is the commutative version of a notion that goes back to Priddy  \cite{P}. For a recent general overview of this class of algebras in the commutative context, and related ones like universally Koszul algebras \cite{Con1} and the strongly Koszul algebras~\cite{HHR}, see \cite{CDR}. 

Koszul algebras are known to behave well from several points of views. For example, the Koszul property can be completely characterized in terms of  finiteness of the Castelnuovo-Mumford regularity, see Avramov, Eisenbud and Peeva \cite{AE}, \cite{AP}.   On the other hand, not all the Koszul algebras are born equal.  According to Roos \cite{Roos} a  Koszul algebra $R$ is \emph{good}  if for each finitely generated graded $R$-module $M$, its Poincar\'{e} series, which is the generating series of the Betti numbers of $M$, is rational and with denominator  depending only on $R$. Roos described  in \cite{Roos}  examples  of  bad (i.e. not good) Koszul algebras.  

The focus of this work is on a class of good Koszul algebras called absolutely Koszul algebras. It was introduced in \cite{IyR} through the notion of linearity defect as follows.  Consider the minimal graded free resolution $F$ of a finitely generated $R$-module $M$.  The multiplication with the maximal homogeneous ideal $\mm$  of $R$ induces on $F$ a suitable filtration (see Section \ref{sect_general} for details) whose associated graded complex  is denoted by $\linp^R F$. The \emph{linearity defect} of $M$ is  defined as
\[
\lind_R M=\sup\{i: H_i(\linp^R F)\neq 0\}.
\]
This homological invariant has been introduced by Herzog and Iyengar \cite{HIy} building on work of  Eisenbud, Fl{\o}ystad, and Schreyer~\cite{EFS} on resolution of modules over exterior algebras. The ring $R$ is \emph{absolutely Koszul} if $\lind_RM$ is finite for each finitely generated graded $R$-module $M$. 

One of the things that has emerged from the work reported here is that the absolutely Koszul property for $R$, which concerns the structure of all $R$-modules, is closely related to one concerning the algebra structure of $R$. Namely whether there exists a surjective morphism $\varphi\colon Q\to R$ of standard graded $k$-algebras such that the ring $Q$ is a complete intersection and the map $\varphi$ is Golod. When such a map exists we say that $R$ has the \emph{Backelin-Roos property}; see Section~\ref{BRprop} for details. For a Koszul algebra $R$ one has the following implications: 
 \[
\begin{array}{c}
R \mbox{ has the Backelin-Roos property} \\
\Downarrow \\ 
R \mbox{ is absolutely Koszul} \\
\Downarrow \\ 
 R \mbox{ is good in the sense of Roos.}  
 \end{array} 
 \] 
These implications are proved in \cite[\S1.8]{HIy} and \cite[Theorem 5.9]{HIy}. We are not aware of any examples that show the implications are strict. In other words, every absolutely Koszul algebra that we know of has the Backelin-Roos property (at least after a field extension), and the ones that are not absolutely Koszul rings are bad Koszul algebras in the sense of Roos.  However if one drops the assumption that $R$ is Koszul, there are examples of algebras $R$ that are good in the sense of Roos without having  the Backelin-Roos property, see \cite[pg.30-31]{A} and \cite[Sect.6]{KP}. 

The first goal of the paper is to track the absolutely Koszul and the Backelin-Roos properties under standard algebra operations. This is the content of Sections \ref{sect_general} and \ref{BRprop}.  The focus of Section \ref{sect_monomial} is on Koszul algebras with monomial relations, and one of the main results there is that any universal Koszul algebra with monomial relations has the Backelin-Roos property.  

Section \ref{VeroneseSegre} is devoted to identifying Veronese and Segre algebras that have the Backelin-Roos property. When $k$ is algebraically closed and of characteristic zero, we  show that a  Cohen-Macaulay domain $R$ with  $e(R)\leq 2\codim(R)$ has  the Backelin-Roos property  and, under some   additional assumptions, the same is true also for Gorenstein domains  with $e(R)=2\codim(R)+2$.   These statements are proved in Theorems \ref{thm:geoint}  using a reduction to the Artinian case,  exploiting  a result  of Harris on the general hyperplane sections of irreducible projective curves  stated in  \cite[page 109]{ACGH}, and results of Conca, Rossi and Valla \cite{CRV} on the Koszul property of points in projective space and of \c{S}ega and Henriques on the  Backelin-Roos property of   Gorenstein Artinian algebras with socle in degree $3$. From this  one deduces that the $c$-th Veronese $S^{(c)}$ subalgebra of the polynomial ring $S$ in $n$ variables over an algebraically closed field of characteristic $0$ has the Backelin-Roos property  in the following cases: 
\begin{enumerate}[\quad\rm(1)]
\item $n\le 3$ and any $c$; 
\item $n=4$ and $c\le 4$;
\item $n\le 6$ and $c\le 2$;  
\end{enumerate} 
See Corollary \ref{cor:Veronese} and Corollary~\ref{cor:anyk}. 

On the other hand, when $(n,c)$ equals $(4,5)$, or $(5,3)$, or $(7,2)$,  the algebra  $S^{(c)}$ does not have the Backelin-Roos property;
see Lemma~\ref{lem:obstructedVeronese}. Indeed the $h$-polynomial $h_R(z)$ of an algebra $R$ with the Backelin-Roos property must satisfy  the inequality $h_R(-1)\leq 0$; see Corollary~\ref{cor:HilbObstruction}, and this condition is violated for the values of $(n,c)$ listed above.  A more stringent numerical obstruction is described  in Proposition~\ref{prop:HilbObstruction}. In fact, computational evidence suggest that all the Veronese algebras $S^{(c)}$ with $(n,c)$ different from the values in (1),(2), and (3) above violate the inequality. A similar situation occurs for Segre products of polynomial rings. 

We are thus faced with the following intriguing question:

\medskip

 \emph{Are all Veronese subrings  of polynomial rings absolutely Koszul? What about Segre products}?

\medskip
 
Finally in Section \ref{sect_glind}  we discuss the global linearity defect $\glind R$ of $R$, defined as the supremum of all the linearity defects of modules over $R$. This number invariant is rarely finite; see \cite[Theorem 6.2]{HIy}. Our goal in this section is to find bounds on the global linearity defect.

The first author would like to thank Volkmar Welker for useful discussions concerning the Hilbert series  of Veronese algebras. 

\section{Absolutely Koszul algebras and changes of rings}
\label{sect_general}
In this section we establish some results, and recall earlier ones, that track the absolutely Koszul property under change of rings.  Throughout, $k$ will  be a field and $R$ a standard graded $k$-algebra.   Let $M$ be a finitely generated graded $R$-module and
\[
F: \cdots \to F_i\to \cdots \to F_1\to F_0\to 0
\]
its minimal graded free resolution. For each $i\ge 0$, consider the subcomplex $\Fc^i F$ of $F$ given by 
\[
\Fc^i F: \cdots \to F_{i+1} \to F_i \to \mm F_{i-1} \to \cdots \to \mm^{i-1}F_1\to \mm^iF_0 \to 0.
\]
where $\mm=R_{\geqslant 1}$ is the homogeneous maximal ideal of $R$. The associated graded complex of the descending filtration $\{   \Fc^{i}F \}_{i\in \N}$   is denoted  by $\linp^R F$, and called the \emph{linear part of $F$}. Observe that $(\linp^R F)_i=\gr_{\mm}(F_i)(-i)$ and that the matrices of differential of $\linp^R F$ are obtained from those of $F$ by replacing all entries of degree $\ge 2$ by zero. 

The \emph{linearity defect} of $M$ is  defined as
\[
\lind_R M=\sup\{i: H_i(\linp^R F)\neq 0\}.
\]
By definition, $R$ is absolutely Koszul if $\lind_R M$ is finite for every $M$.  R\"omer proved that when $R$ is Koszul, a module has linearity defect zero if and only if it is componentwise linear in the sense of Herzog and Hibi \cite{HH}; see, e.g., \cite[Theorem 5.6]{IyR}.

To prepare the reader to better appreciate the results below, we recall an example that shows that, unlike for Koszul algebras,  the class of absolutely Koszul algebras is not closed under tensor product over $k$.   

\begin{ex}
\label{bad-Koszul}
Let $k$ be a field and set $R=k[x,y]/(x,y)^2$, where both $x$ and $y$ are of degree one. This ring is absolutely Koszul: evidently it is Koszul, so $\lind_{R}k=0$, and the first syzygy module of any $R$-module is direct sum of shifted copies of $k$. In fact, it is not hard to verify directly from definitions that the linearity defect of any $R$-module is zero.

However, $R\otimes_{k}R$ is Koszul but not absolutely Koszul. This is because  $R$ is a bad Koszul algebra, in the sense of Roos~\cite[Theorem 2.4b(A)]{Roos}.

Another such example is the ring $R=k[x,y]/(x^{2},xy)$: it is absolutely Koszul but $R\otimes_{k}R$ is not; see \cite[Theorem 2.4b(C)]{Roos}.
\end{ex}

\subsection*{Change of rings}
We present a few results concerning the ascent and descent of the absolutely Koszul property  along a morphism $\varphi\colon R\to S$ of standard graded $k$-algebras. Following \cite{IyR}, we call the number
\[
\glind R=\sup\{\lind_R M: M ~ \text{a finitely generated graded $R$-module}\}
\]
the \emph{global linearity defect} of the $k$-algebra $R$. 

\begin{lem}
\label{lem:field-extension}
Let $R$ be a standard graded $k$-algebra and $k\subseteq l$ an algebraic extension of fields. Set $S=l\otimes_{k}R$ and let $\varphi\colon R\to S$ be the canonical inclusion. The following statements hold.
\begin{enumerate}[\quad\rm(1)]
\item $\lind_{S}(l\otimes_{k}M)=\lind_{R}M$ for any finitely generated graded $R$-module $M$.
\item $\lind_{S} N = \lind_{R}N$ if $k\subseteq l$ is a finite extension and $N$ is a finitely generated graded $S$-module. 
\item $\glind S= \glind R$ holds.
\item $R$ is absolutely Koszul if and only if $S$ is. 
\end{enumerate}
\end{lem}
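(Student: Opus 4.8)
The plan is to reduce all four statements to a single observation: passing from $R$ to $S = l\otimes_k R$ alters neither minimal free resolutions nor their linear parts in any essential way, because $l\otimes_k-$ is exact and faithfully flat and because the linear part of a complex is determined entirely by the \emph{degrees} of the entries of its differentials, which base change does not disturb. Throughout I would write $\mm=R_{\geqslant 1}$ and $\nn=S_{\geqslant 1}$, and record at the outset that $\nn=\mm S=l\otimes_k\mm$, whence $\gr_{\nn}S=l\otimes_k\gr_{\mm}R$.

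For (1) I would start from the minimal graded free resolution $F$ of $M$ over $R$ and first check that $l\otimes_k F$ is the minimal graded free resolution of $l\otimes_k M$ over $S$: it is a resolution by exactness of $l\otimes_k-$, its terms are graded free over $S$, and it is minimal since the entries of its differentials lie in $l\otimes_k\mm=\nn$. As forming the linear part only records the degree-one part of each differential and these degrees are unchanged by tensoring with $l$, this yields an identification $\linp^S(l\otimes_k F)=l\otimes_k\linp^R F$ of complexes of graded $\gr_{\nn}S$-modules. Applying $l\otimes_k-$ then gives $H_i(\linp^S(l\otimes_k F))=l\otimes_k H_i(\linp^R F)$, and faithful flatness of $l$ over $k$ forces the left side to vanish exactly when the right side does; taking the supremum over $i$ proves (1). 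I note this step uses only that $k\subseteq l$ is a field extension, not algebraicity or finiteness.

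For (2), assume $d=[l:k]<\infty$ and let $G$ be the minimal graded free resolution of $N$ over $S$. Fixing a $k$-basis $e_1,\dots,e_d$ of $l$ in degree $0$ identifies $S$ with $R^d$ as a graded $R$-module, so each $S(-a)$ restricts to $R(-a)^d$ and the complex $G$, viewed over $R$, is graded free and still exact. The main obstacle is to verify that $G|_R$ is again \emph{minimal} and that its linear part over $R$ coincides with that of $G$ over $S$; both follow from $\nn=\mm S$. Writing a homogeneous entry $s\in\nn$ of a differential of $G$ as $s=\sum_j e_j\otimes r_j$ with $r_j\in\mm$ homogeneous of degree $\deg s$, a short computation using $e_ie_j=\sum_p c_{ijp}e_p$ with $c_{ijp}\in k$ shows that every entry of $s$ in the $R$-basis is a $k$-combination of the $r_j$, hence lies in $\mm$ and has degree $\deg s$. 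Minimality follows, and an entry survives in $\linp^R(G|_R)$ precisely when $\deg s=1$, i.e. exactly when the corresponding entry survives in $\linp^S G$. Thus $\linp^R(G|_R)$ and $\linp^S G$ share the same underlying complex of graded $k$-vector spaces and the same differentials, so their homologies agree and $\lind_R N=\lind_S N$.

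Finally I would bootstrap (3) and (4) from (1) and (2). By (1), $\lind_R M=\lind_S(l\otimes_k M)$ for every $M$, which gives $\glind R\leqslant\glind S$ and shows that $S$ absolutely Koszul forces $R$ absolutely Koszul. For the reverse I must bound $\lind_S N$ for arbitrary finitely generated $N$, and here the algebraicity of $l/k$ enters: the finitely many elements of $l$ occurring in a presentation of $N$ generate a finite subextension $k\subseteq l'\subseteq l$, so $N=l\otimes_{l'}N'$ for a finitely generated graded $S'=l'\otimes_k R$-module $N'$. Applying (1) to the extension $l'\subseteq l$ gives $\lind_S N=\lind_{S'}N'$, and applying (2) to the finite extension $k\subseteq l'$ gives $\lind_{S'}N'=\lind_R N'$. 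Hence $\lind_S N=\lind_R N'\leqslant\glind R$, yielding $\glind S\leqslant\glind R$ and proving (3); the same chain shows $\lind_S N<\infty$ whenever $R$ is absolutely Koszul, which together with the first implication completes (4).
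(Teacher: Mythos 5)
Your proposal is correct and follows essentially the same route as the paper's proof: base change of the minimal resolution and its linear part for (1), restriction of scalars along the finite free extension for (2), and descent of an arbitrary $S$-module to a finite subextension $l'$ of $k$ to combine (1) and (2) for (3) and (4). The extra computation you carry out in (2) (expanding entries of the differential in a $k$-basis of $l$ to verify minimality and the matching of linear parts) is a correct elaboration of what the paper leaves implicit.
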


\begin{proof}
The  morphism $\varphi$ is flat and $R_{\geqslant 1}S = S_{\geqslant 1}$; this remark will be used repeatedly in the proof.

(1) It is not hard to verify that if $F$ is the minimal graded free resolution of a finitely generated graded $R$-module $M$, then the complex of $S$-modules $l\otimes_{k}F$ is a minimal graded free resolution of the $S$-module $l\otimes_{k}M$, and that there is an isomorphism of complexes
\[
\linp^{S}(l\otimes_{k}F) \cong l\otimes_{k}\linp^{R}F\,.
\]
This explains (1).

\medskip

(2) Let $G$ be a minimal graded free resolution of $N$ as an $S$-module. The extension $k\subseteq l$ is finite, so $S$ is a finitely generated free $R$-module. It follows that $G$ is also the minimal graded free resolution of $N$ viewed as an $R$-module via $\varphi$, and that there is an isomorphism of complexes
\[
\linp^{R}G \cong \linp^{S}G\,.
\]
This implies the desired equality.

\medskip

(3) and (4) When $S$ is absolutely Koszul, it is immediate from (1) that $R$ is absolutely Koszul as well, and that there is an inequality $\glind R\leq \glind S$.  For the converse: let $N$ be a finitely generated graded $S$-module. Observe that $N$ can be realised as $l\otimes_{l'}N'$, where $l'$ is a \emph{finite} extension of $k$ and $N'$ is a finitely generated graded  module over $l'\otimes_{k}R$. Then from (1) and (2) one gets equalities
\[
\lind_{S}N = \lind_{(l'\otimes_{k}R)}(N') = \lind_{R} N'.
\]
This implies that if $R$ is absolutely Koszul, so is $S$ and also yields $\glind S\leq \glind R$. 
\end{proof}

In preparation for proof of the next result we recall that the  $(i,j)$-graded Betti number of finitely generated graded $M$ over a standard graded $k$-algebra is the integer
\[
\beta^{R}_{i,j}(M)=\dim_k \Tor^R_i(k,M)_j\,.
\]
The graded Poincar\'{e} series  of $M$ is the formal power series
\[
P^R_M(s,z)=\sum_{i\in \Z}\sum_{j\in \Z}\beta^{R}_{i,j}(M)s^{j}z^i \in \Z[s][|z|];
\]
it has non-negative coefficients.

\begin{prop}
\label{prop:retracts}
Let $\varphi\colon R\to S$ be a morphism of standard graded $k$-algebras. If there exists a morphism $\sigma\colon S\to R$ of $k$-algebras such that $\varphi\sigma\colon S\to S$ is the identity, then the following statements hold.
\begin{enumerate}[\quad\rm(1)]
\item
If $\lind_{R}S=0$, then $\lind_S N=\lind_RN$  for every finitely generated graded $S$-module $N$.
\item
When $R$ is Koszul, so is $S$ and $\lind_{R}S=0$.
\item
When $R$ is absolutely Koszul so is $S$ and there is an inequality $\glind S\leq \glind R$.
\end{enumerate}
 \end{prop}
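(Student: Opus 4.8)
The engine of the proof will be an explicit model for the minimal $R$-free resolution of an $S$-module built out of the section. Fix a finitely generated graded $S$-module $N$, let $G$ be its minimal graded $S$-free resolution and let $F^{(S)}$ be the minimal graded $R$-free resolution of $S$, regarded as an $R$-module through $\varphi$. Since $\sigma$ is a graded $k$-algebra section, every free $R$-module becomes an $(R,S)$-bimodule via $r\cdot s=r\sigma(s)$, and as $R$ is commutative this right $S$-action is central, so every $R$-linear map is automatically right $S$-linear. Thus $F:=F^{(S)}\otimes_S G$ is a complex of free $R$-modules, and I would first verify it resolves $N$: filtering the double complex by $G$-degree, each column $F^{(S)}\otimes_S G_q$ is a direct sum of copies of $F^{(S)}$ (as $G_q$ is $S$-free), hence has homology $S\otimes_S G_q=G_q$ concentrated in degree $0$, so the spectral sequence degenerates to $G$ and computes $N$. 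It is minimal because the two summands $d_{F^{(S)}}\otimes 1$ and $1\otimes d_G$ of the differential have entries in $\mm_R$ and in $\sigma(\mm_S)\subseteq \mm_R$ respectively. Hence $F$ is \emph{the} minimal graded $R$-free resolution of $N$; counting ranks yields the bigraded factorization $P^R_N(s,z)=P^R_S(s,z)\,P^S_N(s,z)$, and inspecting the degrees of the matrix entries shows
\[
\linp^R F\;\cong\;\linp^R F^{(S)}\otimes_S\linp^S G .
\]

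For (2) I take $N=k$ in the rank identity, giving $P^R_k=P^R_S\cdot P^S_k$. When $R$ is Koszul the series $P^R_k$ is supported on the diagonal $i=j$; since $P^R_S$ and $P^S_k$ have non-negative coefficients and constant term $1$, the only way their product can be diagonal is for each factor to be diagonal. Thus $P^S_k$ is diagonal, so $S$ is Koszul, and $P^R_S$ is diagonal, so $S$ admits a linear $R$-free resolution and in particular $\lind_R S=0$.

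The heart of the matter is (1), where $R$ is no longer assumed Koszul and only $\lind_R S=0$ is given. The point I would stress is the identification of $T:=H_0(\linp^R F^{(S)})$. A short argument using that $R$ and $S$ are standard graded and that $\sigma$ is an algebra section shows that $\Ker\varphi$ is generated in degree one: modulo the ideal generated by its linear forms the induced retraction has kernel with no linear part, which forces it to be an isomorphism in degree one and hence, both rings being standard graded, in every degree. Consequently $T=\coker(\linp^R d_1)=R/\Ker\varphi=S$, which is \emph{free} over $S$. Now $\lind_R S=0$ says $\linp^R F^{(S)}$ is acyclic in positive degrees, so it is a (linear) free resolution of the $S$-free module $T=S$. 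Running the spectral sequence of the double complex $\linp^R F^{(S)}\otimes_S\linp^S G$, homology in the $F^{(S)}$-direction first collapses everything onto the single row $T\otimes_S\linp^S G$, and freeness of $T$ over $S$ eliminates all higher $\Tor$ terms, giving $H_n(\linp^R F)\cong H_n(\linp^S G)$ for every $n$, whence $\lind_R N=\lind_S N$. I expect this collapse to be the main obstacle: without the freeness of $T$ over $S$ the Künneth spectral sequence could a priori create homology above degree $\lind_S N$, so the crucial—and easily overlooked—input is that the kernel of a graded algebra retraction is generated in degree one.

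Finally (3) is formal. If $R$ is absolutely Koszul it is Koszul, so (2) gives $\lind_R S=0$, and (1) then gives $\lind_S N=\lind_R N$ for every finitely generated graded $S$-module $N$. As $N$ is also finitely generated over $R$ and $R$ is absolutely Koszul, $\lind_R N<\infty$; hence $\lind_S N<\infty$ and $S$ is absolutely Koszul. Taking the supremum over all such $N$ gives $\glind S=\sup_N\lind_R N\le\glind R$.
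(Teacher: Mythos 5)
Your proposal is correct and follows essentially the same route as the paper: both build the minimal $R$-free resolution of $N$ as $F\otimes_S G$ (the paper cites Herzog for this), deduce the factorizations $\linp^R(F\otimes_S G)\cong\linp^RF\otimes_S\linp^SG$ and $P^R_k=P^R_S\,P^S_k$, and conclude (1) by collapsing onto $S\otimes_S\linp^SG$ and (2) by inspecting the diagonal support of the Poincar\'e series. The only differences are in the level of detail — you verify directly that $\Ker\varphi$ is generated in degree one to identify $H_0(\linp^RF)$ with $S$, and your positivity argument for (2) avoids the a priori vanishing $\beta^R_{i,j}(S)=0=\beta^S_{i,j}(k)$ for $i>j$ that the paper invokes — neither of which changes the substance of the argument.
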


\begin{proof}

(1)  Let $F$ be the minimal graded $R$-free resolution of $S$ and $G$  the minimal graded $S$-free resolution of $N$. The complex of $R$-modules $F\otimes_{S}G$, where $S$ acts on $F$ by restriction of scalars along $\sigma$ and $R$ acts on the tensor product via its action on $F$, is then the  minimal graded $R$-free resolution of $N$: Indeed, it is clearly minimal, and as $G$ is a complex of free $S$-modules, the quasi-isomorphism $F\xrightarrow{\simeq} S$ induces the first quasi-isomorphism below
\[
 F\otimes_{S}G\xrightarrow{\simeq} S\otimes_{S}G\simeq N\,.\tag{$\ast$}
 \]
See \cite[Theorem 1]{Her} for details. Arguing as in the proof of \cite[Lemma 2.7]{IyR} one gets an isomorphism of complexes of $R$-modules:
\[
\linp^R (F\otimes_S G)\cong \linp^R F \otimes_S \linp^S G\,.
\]
Since $\linp^R F \simeq S$, as $\lind_RS=0$, and $\linp^S G$ is a complex of free $S$-modules, the canonical map
\[
\linp^R F \otimes_S \linp^S G \xrightarrow{\quad \simeq\quad} S\otimes_{S} \linp^SG \cong \linp^S G
\]
is a quasi-isomorphism. This gives the desired equality.

\medskip

(2) The isomorphism ($\ast$) above, with $N=k$, implies an equality of formal power series
\[
P^{R}_{k}(s,z) = P^{R}_{S}(s,z)P^{S}_{k}(s,z)\,.
\]
When $R$ is Koszul, $\beta^{R}_{i,j}(k)=0$ for all $i\ne j$. Recall that one always has inequalities
\[
\beta^{R}_{i,j}(S)=0=\beta^{S}_{i,j}(k)\quad \text{for $i>j$}
\]
so it follows from the equality above and the condition on $\beta^{R}_{i,j}(k)$ that  $\beta^{R}_{i,j}(S)=0=\beta^{S}_{i,j}(k)$ also for $i<j$.  Thus $S$ has a linear resolution over $R$, so that $\lind_{R}S=0$, and also $S$ is Koszul.

\medskip

(3) is a direct consequence of (1) and (2).
\end{proof}

Recall that  a \emph{flat resolution} of a (not necessarily finitely generated) $R$-module $M$ is a complex $F$ of flat $R$-modules with $F_{i}=0$ for $i<0$ equipped with a quasi-isomorphism $F\simeq M$. The \emph{flat dimension} of $M$ is 
\[
\flatdim_{R}M:=\inf\left\{n\ge 0 \left|
\begin{gathered}
\text{there exists a flat resolution $F\simeq M$}\\
\text{ such that $F_{i}=0$ for $i\not\in[0,n]$}
\end{gathered}
\right.
\right\}.
\]
This equals the projective dimension of $M$ when $M$ is finitely generated. The result below extends \cite[Theorem 2.11]{IyR} where it is assumed also that $S$ is a finitely generated $R$-module.

\begin{thm}
\label{thm:glind-flatdim}
Let $R\to S$ be a morphism of standard graded $k$-algebras such that $\flatdim_R S$ is finite.  If $S$ is absolutely Koszul, then so is $R$, and there is an inequality
\[
\glind R\le \glind S+\flatdim_R S + \dim (S/\mm S)\,,
\]
where $\mm$ is the homogeneous maximal ideal of $R$.
\end{thm}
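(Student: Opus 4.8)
The plan is to reduce to \cite[Theorem 2.11]{IyR}, the case in which $S$ is a finitely generated $R$-module, by interposing a polynomial extension of $R$ over which $S$ becomes module-finite. By Lemma~\ref{lem:field-extension} the numbers $\glind R$ and $\glind S$ are unchanged under an algebraic field extension, while $\flatdim_R S$ and $\dim(S/\mm S)$ are preserved under the corresponding flat base change; so I may assume $k$ is infinite. Set $d=\dim(S/\mm S)$. Since $S/\mm S=S\otimes_R k$ is a standard graded $k$-algebra of dimension $d$, Noether normalization supplies classes $\bar y_1,\dots,\bar y_d\in(S\otimes_R k)_1$ such that $(S\otimes_R k)/(\bar y_1,\dots,\bar y_d)$ is finite dimensional over $k$. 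Lifting these to $y_1,\dots,y_d\in S_1$, I form the polynomial ring $T=R[t_1,\dots,t_d]$, a standard graded $k$-algebra, together with the morphism $T\to S$ extending $\varphi$ by $t_i\mapsto y_i$.

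Next I would check that $T\to S$ satisfies the hypotheses of \cite[Theorem 2.11]{IyR}. Its fiber is $S\otimes_T k=(S\otimes_R k)/(\bar y_1,\dots,\bar y_d)$, which is finite dimensional by construction, so graded Nakayama shows that $S$ is a finitely generated $T$-module. To control its projective dimension I would use, for a single variable and any $R[t]$-module $N$, the standard exact sequence
\[
0\to R[t]\otimes_R N\to R[t]\otimes_R N\to N\to 0,
\]
which yields $\flatdim_{R[t]}N\le\flatdim_R N+1$ because $\flatdim_{R[t]}(R[t]\otimes_R N)\le\flatdim_R N$. Iterating over the $d$ variables gives $\flatdim_T S\le\flatdim_R S+d$; and since $S$ is finitely generated over the Noetherian ring $T$, this number equals $\projdim_T S$, so in particular it is finite.

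Now \cite[Theorem 2.11]{IyR} applies to $T\to S$ and shows that $T$ is absolutely Koszul with
\[
\glind T\le\glind S+\projdim_T S\le\glind S+\flatdim_R S+d.
\]
To return to $R$, I would observe that $R$ is a retract of $T$: the inclusion $R\hookrightarrow T$ is split by the augmentation $T\to R$ sending each $t_i$ to $0$. Proposition~\ref{prop:retracts}(3), applied with $T$ as the ambient ring and $R$ as its retract, then gives that $R$ is absolutely Koszul and $\glind R\le\glind T$. Combining the two inequalities yields the asserted bound, and the implication that $R$ is absolutely Koszul whenever $S$ is follows at once, as each term on the right-hand side is finite.

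The only genuinely new point beyond \cite[Theorem 2.11]{IyR} is the construction of the intermediate ring $T$, and this is where the main difficulty lies: one must simultaneously force $S$ to be module-finite over $T$, which requires at least $d$ new variables, and keep $\projdim_T S$ finite with the sharp summand $d$, which requires at most $d$. Noether normalization of the fiber $S\otimes_R k$ is exactly what balances these opposing demands, and the estimate $\flatdim_T S\le\flatdim_R S+d$ is what converts the count of added variables into the term $\dim(S/\mm S)$ of the final bound.
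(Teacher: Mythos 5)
Your argument is correct and follows essentially the same route as the paper: reduce to an infinite field, use Noether normalization of the fibre $S/\mm S$ to interpose a polynomial extension $T=R[t_1,\dots,t_d]$ over which $S$ is module-finite of finite projective dimension, apply \cite[Theorem 2.11]{IyR} to $T\to S$, and descend to $R$ via the retraction $T\to R$ and Proposition~\ref{prop:retracts}. The only (harmless) difference is that you justify $\flatdim_T S\le\flatdim_R S+d$ by the elementary characteristic sequence $0\to R[t]\otimes_R N\to R[t]\otimes_R N\to N\to 0$, where the paper instead cites \cite[(3.3)]{AFH}.
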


\begin{proof}
We reduce to the case where the $R$-module $S$ is finitely generated, as follows.

Enlarging the field $k$ if necessary, we may assume it is infinite; see Lemma~\ref{lem:field-extension}. Then, by Noether normalisation, there exist elements $s_{1},\dots,s_{c}$ in $S_{1}$ that form a system of parameters for the ring $S/\mm S$. Set $R'=R[y_1,\ldots,y_c]$, where $\yb:=\{y_{1},\dots,y_{c}\}$ are indeterminates in degree one, let $\dot{\varphi}\colon R\to R'$ be the canonical inclusion, and $\varphi'\colon R'\to S$ the morphism of $R$-algebras defined by the assignment $y_{i}\mapsto s_{i}$ for each $i$. These give a factorisation
\[
R\xrightarrow{\quad\dot\varphi\quad} R' \xrightarrow{\quad\varphi'\quad} S
\]
of $\varphi$. Note that $\mm' = \mm + (y_{1},\dots,y_{c})$ is the homogeneous maximal ideal of $R'$ and that  $\rk_{k}(S/\mm' S)$ is finite, by construction. Therefore, $S$ is finitely generated when viewed as an $R'$-module via $\varphi'$; this is by Nakayama's Lemma for graded modules.

Consider the morphism $R'\to R'/(\yb)\cong R$. The Koszul complex on $\yb$ is a minimal graded free resolution of $R$ as a module over $R'$. Given this it is clear that $\lind_{R'}R=0$. Moreover, composing $\dot\varphi$ with $R'\to R$ gives the identify on $R$, so Proposition~\ref{prop:retracts} applies and yields that when $R'$ is absolutely Koszul so is $R$, and that there is an inequality
\[
\glind R\leq \glind R'\,.
\]
We claim that there is an inequality 
\[
\flatdim_{R'} S \le \flatdim_R S+ c\,.
\]
Indeed, this holds because $\dot\varphi$ is a flat morphism and $R'/\mm R'$ is a $c$-dimensional regular ring; see \cite[(3.3)]{AFH} for an argument in the case of local homomorphisms; it carries over to the graded case.

In summary, replacing $R$ by $R'$ we may assume that $S$ is finitely generated as an $R$-module, and of finite projective dimension.
In this case \cite[Theorem 2.11]{IyR} applies---noting once again that whilst \emph{op.cit.} deals with local rings the argument carries over to the graded case---and gives the desired statement.
\end{proof}

\begin{cor}
\label{cor:poly}
Let $R$ be a standard graded $k$-algebra and $x$ an indeterminate with $\deg x=1$. Then $R$ is absolutely Koszul if and only if so is $R[x]$.
\end{cor}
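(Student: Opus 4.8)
The plan is to prove Corollary~\ref{cor:poly} by applying Theorem~\ref{thm:glind-flatdim} in both directions, using the polynomial extension $R\to R[x]$ as the morphism $\varphi$. The key observation is that this extension is especially well-behaved: $R[x]$ is free as an $R$-module, and the quotient $R[x]/\mm R[x]\cong k[x]$ is a one-dimensional polynomial ring.

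For the forward direction, suppose $R$ is absolutely Koszul. I would like to apply the theorem to a morphism whose source is $R[x]$ and whose target is $R$. The natural candidate is the augmentation $R[x]\to R[x]/(x)\cong R$, which sends $x\mapsto 0$. This map has $\flatdim_{R[x]}R=1$, witnessed by the Koszul complex on the single element $x$, and the fibre ring $R/\mm R$ (with respect to the maximal ideal of $R[x]$) is simply $k$, so $\dim(R/\mm_{R[x]}R)=0$. Theorem~\ref{thm:glind-flatdim} then yields that $R[x]$ is absolutely Koszul with $\glind R[x]\le \glind R+1$.

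For the reverse direction, suppose $R[x]$ is absolutely Koszul. Here I would apply the theorem to the canonical inclusion $\varphi\colon R\to R[x]$. Since $R[x]$ is a free $R$-module, we have $\flatdim_R R[x]=0$, and the fibre ring is $R[x]/\mm R[x]\cong k[x]$, which has dimension $1$. Theorem~\ref{thm:glind-flatdim} then gives that $R$ is absolutely Koszul with $\glind R\le \glind R[x]+0+1=\glind R[x]+1$. This completes both implications.

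The main subtlety to verify is that Theorem~\ref{thm:glind-flatdim} is genuinely applicable in the forward direction, where the relevant morphism $R[x]\to R$ does not have a finitely generated \emph{source} issue but rather requires the finiteness of $\flatdim$ of the \emph{target module} $R$ over $R[x]$; this is immediate from the Koszul complex. One should also confirm that the fibre dimensions are computed against the correct maximal ideals, since the theorem's error term $\dim(S/\mm S)$ refers to the maximal ideal of the source ring $R$ (respectively $R[x]$) in each application. I do not expect any real obstacle here, as both applications of the theorem are with the cleanest possible data; the corollary is essentially a formal consequence of Theorem~\ref{thm:glind-flatdim} together with the triviality of flat dimensions and fibre dimensions for the polynomial extension and its augmentation.
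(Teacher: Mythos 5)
Your proof is correct and follows exactly the paper's own argument: the paper likewise deduces both implications from Theorem~\ref{thm:glind-flatdim}, applied to the canonical surjection $R[x]\to R$ (with $\flatdim_{R[x]}R=1$ via the Koszul complex on $x$ and zero-dimensional fibre) and to the canonical inclusion $R\to R[x]$ (with $\flatdim_R R[x]=0$ and one-dimensional fibre $k[x]$). Your accounting of which ring plays the role of source versus target, and of the resulting bounds $\glind R[x]\le\glind R+1$ and $\glind R\le\glind R[x]+1$, is accurate.
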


\begin{proof}
Indeed, both implications can be deduced from Theorem~\ref{thm:glind-flatdim}, by applying to the canonical inclusion $R\to R[x]$ and the canonical surjection $R[x]\to R$.
\end{proof}

\subsection*{Fibre products}
Let  $S$ and $T$ be standard graded $k$-algebras, and $\varepsilon_{S}\colon S\to k$ and $\varepsilon_{T}\colon T\to k$ the canonical augmentations. The \emph{fibre product} of $S$ and $T$ is the ring 
\[
S\times_{k}T : = \{(s,t)\in S\times T\mid \varepsilon_{S}(s) = \varepsilon_{T}(t)\}.
\]
Thus, if $S=k[x_1,\ldots,x_m]/I$ and $T=k[y_1,\ldots,y_n]/J$, then
\[
S\times_k T \cong \frac{k[x_1,\ldots,x_m,y_1,\ldots,y_n]}{I+J+(x_iy_j:1\le i\le m, 1\le j\le n)}.
\]
Observe that the fibre product is itself a standard graded $k$-algebra.

\begin{thm}
\label{thm:fibre-product}
The fibre product $S\times_k T$ is absolutely Koszul if and only if  $S$ and $T$ are.
Moreover, when $\glind S$ and $\glind T$ are finite so is $\glind (S\times_k T)$.
\end{thm}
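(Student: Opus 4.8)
The plan is to handle the two directions separately: the ``only if'' part and the inequalities $\glind S,\glind T\le\glind(S\times_kT)$ come for free from the retract results already in hand, while the converse requires a structural analysis of syzygies over the fibre product.

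Set $R=S\times_k T$ and write $\mm_S,\mm_T,\mm$ for the respective homogeneous maximal ideals. The projections $\pi_S\colon R\to S$ and $\pi_T\colon R\to T$ admit the evident sections $\iota_S,\iota_T$, so that $S$ and $T$ are algebra retracts of $R$ in the sense of Proposition~\ref{prop:retracts}. Put $\mathfrak a=\ker\pi_T$ and $\mathfrak b=\ker\pi_S$; these are ideals of $R$ with $\mathfrak a\cong\mm_S$ as an $S$-module, $\mathfrak b\cong\mm_T$ as a $T$-module, $\mathfrak a+\mathfrak b=\mm$, $\mathfrak a\cap\mathfrak b=0$ and, crucially, $\mathfrak a\mathfrak b=0$. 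For the ``only if'' direction I would apply Proposition~\ref{prop:retracts}(3) to the retracts $(\pi_S,\iota_S)$ and $(\pi_T,\iota_T)$: if $R$ is absolutely Koszul then so are $S$ and $T$, and one obtains $\glind S\le\glind R$ and $\glind T\le\glind R$.

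For the converse, assume $S$ and $T$ are absolutely Koszul; then they are Koszul (since $\lind k<\infty$ forces $\lind k=0$), and hence so is $R$, because the Koszul property is inherited by fibre products. Proposition~\ref{prop:retracts}(2) then yields $\lind_R S=0$, and Proposition~\ref{prop:retracts}(1) upgrades this to the key input: for every finitely generated $S$-module $N$, regarded as an $R$-module along $\pi_S$, one has $\lind_R N=\lind_S N\le\glind S$, and symmetrically $\lind_R N'=\lind_T N'\le\glind T$ for $T$-modules $N'$. By additivity of $\lind_R$ over direct sums, any $R$-module that is a direct sum of an $S$-module and a $T$-module has linearity defect at most $\max\{\glind S,\glind T\}$. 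Since $\lind_R M\le \lind_R(\Omega^n_R M)+n$ for every $n$, it therefore suffices to show that some fixed syzygy of an arbitrary finitely generated $R$-module $M$ has this form.

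The structural heart is the relation $\mm G=\mathfrak a G\oplus\mathfrak b G$ for every graded free $R$-module $G$, in which $\mathfrak a G$ is an $S$-module and $\mathfrak b G$ a $T$-module (both because $\mathfrak a\mathfrak b=0$). Applying this to $G=F_0$ in a minimal free resolution of $M$ and intersecting the first syzygy $\Omega=\Omega^1_R M\subseteq\mm F_0$ with the two summands produces, via $\mathfrak a\Omega\subseteq\Omega\cap\mathfrak a F_0$ and $\mathfrak b\Omega\subseteq\Omega\cap\mathfrak b F_0$, an exact sequence
\[
0\to \Omega_S\oplus\Omega_T\to \Omega\to k^{a}\to 0,
\]
where $\Omega_S=\Omega\cap\mathfrak a F_0$ is an $S$-module, $\Omega_T=\Omega\cap\mathfrak b F_0$ is a $T$-module, and the cokernel is a trivial module $k^a$ accounting for the socle of $M$ inside $\mm M$. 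The plan is to iterate this for $\Omega^n_R M$, so that modulo the trivial cokernels every syzygy is assembled from an $S$-module and a $T$-module.

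The main obstacle is exactly this last point: the linearity defect is not additive along arbitrary short exact sequences, so I must control $\lind_R$ across the displayed sequence and its iterates, where the only impurity is the trivial summands $k^a$. I expect to dispose of them by using that $R$ is Koszul, so that every trivial module has linearity defect $0$ and is componentwise linear, together with a mapping-cone analysis showing that resolving a $k^a$ contributes only copies of $\Omega^1_R k=\mm_S\oplus\mm_T$, which again split as an $S$-module and a $T$-module. This should deliver a uniform estimate of the shape
\[
\glind(S\times_k T)\le \max\{\glind S,\glind T\}+c
\]
for a small absolute constant $c$, proving simultaneously that $R$ is absolutely Koszul and that $\glind R$ is finite once $\glind S$ and $\glind T$ are. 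Combined with $\glind S,\glind T\le\glind R$ from the first paragraph, this completes the proof.
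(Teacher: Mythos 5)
Your overall architecture matches the paper's: the ``only if'' direction and the inequalities $\glind S,\glind T\le \glind R$ via the retract result (Proposition~\ref{prop:retracts}), and the converse via the identifications $\lind_R N=\lind_S N$ and $\lind_R N'=\lind_T N'$ for modules restricted along the two projections, reducing everything to a structural statement about syzygies over $R=S\times_kT$. The difference is in that structural statement, and this is where your argument has a genuine gap. The paper invokes the theorem of Dress and Kr\"amer \cite{DK} that the \emph{second} syzygy of any finitely generated graded $R$-module is, on the nose, a direct sum $N\oplus P$ of an $S$-module and a $T$-module; since $\lind_R$ is additive on direct sums and $\lind_RM\le \lind_R\Omega^2_RM+2$, this immediately gives $\glind R\le\max\{\glind S,\glind T\}+2$. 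You instead establish only the weaker first-syzygy statement $0\to\Omega_S\oplus\Omega_T\to\Omega\to k^a\to 0$ (which is correct as far as it goes, since $\mm\Omega=\mathfrak a\Omega+\mathfrak b\Omega\subseteq\Omega_S\oplus\Omega_T$ forces the cokernel to be a $k$-vector space), and then the rest of the argument is announced rather than carried out.

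The unexecuted step is precisely the hard one, and as written it does not go through. The linearity defect is computed from the \emph{minimal} free resolution, and a mapping-cone or horseshoe construction over the non-split extension $0\to\Omega_S\oplus\Omega_T\to\Omega\to k^a\to 0$ produces a resolution of $\Omega$ that is in general not minimal; one cannot read off $\lind_R\Omega$ from $\lind_R(\Omega_S\oplus\Omega_T)$ and $\lind_Rk^a$ by inspecting that cone, and there is no general bound of the form $\lind_RB\le\max\{\lind_RA,\lind_RC\}+c$ for short exact sequences $0\to A\to B\to C\to 0$. Likewise, the assertion that resolving the $k^a$ factors ``contributes only copies of $\Omega^1_Rk=\mm_S\oplus\mm_T$'' to the higher syzygies of $\Omega$ presupposes control over how the extension interacts with minimality, which is exactly what is missing. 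To close the gap you would either have to prove the splitting of the second syzygy yourself (essentially reproving \cite[Bemerkung 3]{DK}), or supply an actual argument comparing the linear parts of the minimal resolutions across your exact sequence; the phrase ``I expect to dispose of them'' is standing in for the entire content of the proof.
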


\begin{proof}
It is well-known that $S\times_k T$ is Koszul if and only if $S$ and $T$ are Koszul; see \cite{BF}. Hence we assume from the beginning that $S$ and $T$, and hence also $S\times_{k}T$, are Koszul. Set $R=S\times_k T$, and note that the augmentations of $S$ and $T$ induce morphisms 
\[
\varepsilon_{S}\times_{k}T\colon R\to T\quad\text{and}\quad S\times_{k}\varepsilon_{T}\colon R\to S
\]
of graded $k$-algebras. Observe also that the composition of the inclusion $T\to R$  with $\varepsilon_{S}\times_{k}T$ is the identity on $T$, whilst that of $S\to R$ with $S\times_{k}\varepsilon_{T}$ is the identity on $S$. Proposition~\ref{prop:retracts}(3) thus applies and yields that when $R$ is absolutely Koszul, so are $S$ and $T$. The same result also implies that when $S$ and $T$ are absolutely Koszul, any $R$-module that is obtained from $S$ by restriction along $S\times_{k}\varepsilon_{T}$, or from $T$  by restriction along  $\varepsilon_{S}\times_{k}T$,  has finite linearity defect. It remains to note that the second syzygy of any finitely generated graded $R$-module is of the form $N\oplus P$ where $N$ is an finitely generated graded $S$-module and $P$ is a finitely generated graded $T$-module; see~\cite[Bemerkung 3]{DK}.
\end{proof}

\section{The Backelin-Roos property}
\label{BRprop} 

Let $\varphi\colon Q\to R$ be a surjection of standard graded $k$-algebras such that $\Ker \varphi \subseteq (Q_{\ge 1})^2$. By a result of Serre, see for example \cite[Prop.3.3.2]{Avr}, there is a coefficientwise inequality of formal power series:
\[
P^R_k(s,z) \preccurlyeq \frac{P^Q_k(s,z)}{1-z(P^Q_R(s,z)-1)}.
\]
When there is equality, $\varphi$ is said to be \emph{Golod}.  The ring $R$ is said to be Golod if some (equivalently, any) minimal presentation $k[x_{1},\dots,x_{n}]\twoheadrightarrow R$, meaning that $x_{1},\dots,x_{n}$ are indeterminates over $k$ and $n=\embdim R$, is a Golod homomorphism.

For what follows, it is helpful to have the following notation: For any finite generated graded $Q$-module $M$ and integer $i\ge 0$, set
\[
t^{Q}_{i}(M) = \max\{j\in\Z \mid \Tor^{Q}_{i}(k,M)_{j}\ne 0\}.
\]
Here is a simple test for detecting Golod maps when the source and target are Koszul; this was proved by Herzog and Iyengar~ \cite[Proposition 5.8]{HIy} in the local case, but the argument applies also in our context.

\begin{rem}
\label{rem:Golod2lin}
The following statements are equivalent:
\begin{enumerate}[\quad\rm(i)]
\item $R$ is a Koszul algebra and $\varphi$ is a Golod morphism;
\item $Q$ is a Koszul algebra and $t^{Q}_{i}(R)\leq i+1$ for each $i\ge 0$.
\end{enumerate}
In (ii), the condition on the $t^{Q}_{i}(R)$ can be stated as: $\Ker(\varphi)$ has a 2-linear $Q$-free resolution. A special case of this equivalence  is  worth noting: The ring $R$ is Koszul and Golod if, and only, if the kernel of any minimal presentation $k[x_{1},\dots,x_{n}]\twoheadrightarrow R$ has a 2-linear free resolution.
\end{rem}

The elementary observation below will prove useful.

\begin{rem}
\label{rem:tis}
Given a complex of graded $Q$-modules
\[
C\colon \cdots \to C_i \to C_{i-1} \to \cdots \to C_0\to 0\,.
\]
A simple computation, based on breaking $C$ into short exact sequences, yields that for each integer $i\geq 0$, there is an equality
\[
t_i^Q(H_0(C))\leq \max\{a_i,b_i\}
\]
where the integers $a_{i}$ and $b_{i}$ are defined as follows:
\begin{align*}
a_i & =\max\{ t_j^Q(C_{i-j}) : j=0,\dots, i\} \\
b_i &=\max\{ t_j^Q(H_{i-j-1}(C)) : j=0,\dots,i-2\}.
\end{align*}
\end{rem}

As before, let $R$ be a standard graded $k$-algebra. 

\begin{defn} 
We say that $R$ has the \emph{Backelin-Roos property} if there exists a surjective Golod morphism $Q\to R$, with $Q$ a standard graded complete intersection ring. 
\end{defn} 

For example, when $R$ is either a complete intersection or is Golod, it has the Backelin-Roos property. Observe that when $R$ is Koszul, any $Q$ as above also has to be Koszul, by Remark~\ref{rem:Golod2lin}, and hence defined by quadrics. 

The next result is a version of \cite[Theorem 5.9]{HIy} for graded rings; again the proof carries over. It is essentially the only method we know for detecting absolutely Koszul rings.

\begin{prop}
\label{prop:br=ak}
Let $R$ be a standard graded Koszul $k$-algebra. If for some algebraic extension $l$ of $k$, the ring $l\otimes_{k}R$ has the Backelin-Roos property, then $R$ is absolutely Koszul.
\end{prop}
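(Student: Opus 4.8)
The plan is to bootstrap from the field extension and then reduce to the implication that a Koszul algebra with the Backelin-Roos property is absolutely Koszul, which is the graded incarnation of \cite[Theorem 5.9]{HIy}. First I would set $S=l\otimes_k R$ and invoke Lemma~\ref{lem:field-extension}(4), which says that $R$ is absolutely Koszul precisely when $S$ is; so it suffices to prove that $S$ is absolutely Koszul. The ring $S$ is again Koszul: applying Lemma~\ref{lem:field-extension}(1) to the residue field, that is with $M=k$, gives $\lind_S l=\lind_R k=0$, since $l$ is the residue field of $S$. By hypothesis $S$ has the Backelin-Roos property. Thus everything comes down to showing that a Koszul algebra carrying a surjective Golod morphism $\pi\colon Q\to S$ from a complete intersection $Q$ is absolutely Koszul.

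For this core statement I would proceed as follows. By Remark~\ref{rem:Golod2lin}, the Koszulness of $S$ forces $Q$ to be Koszul as well, so $Q$ is a complete intersection of quadrics, and moreover $t^Q_i(S)\le i+1$ for all $i$, i.e. $S$ has an almost-linear resolution over $Q$. The argument then rests on two inputs: (a) a complete intersection of quadrics is absolutely Koszul, with $\glind Q$ finite, a fact established in \cite{HIy}; and (b) a transfer of finiteness along the Golod morphism $\pi$. For (b), given any finitely generated graded $S$-module $M$, I would regard $M$ as a finitely generated $Q$-module and build its minimal $S$-free resolution from the Golod construction applied to the minimal $Q$-free resolutions of $M$ and of $S$, via the trivial Massey operations attached to a Golod map. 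The linear part $\linp^S$ of this resolution should then be expressible, as in the proof of Proposition~\ref{prop:retracts} and \cite[Lemma 2.7]{IyR}, through the linear parts of the constituent $Q$-resolutions, whose homology is controlled by $\glind Q<\infty$ together with the almost-linearity $t^Q_i(S)\le i+1$.

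The hard part will be step (b): showing that the homology of $\linp^S$ of the Golod resolution of $M$ vanishes beyond a bound that is independent of $M$. This is precisely the delicate bookkeeping, tracking how the linear part interacts with the tensor and Massey construction underlying a Golod resolution, that constitutes the technical heart of \cite[Theorem 5.9]{HIy}. My expectation, matching the remark preceding the statement, is that this computation is insensitive to the distinction between the local and the standard graded settings, so that the entire argument of \emph{op.\,cit.}\ carries over once the reduction of the first paragraph has been made.
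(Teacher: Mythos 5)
Your proposal is correct and follows the same route as the paper: reduce to $S=l\otimes_k R$ via Lemma~\ref{lem:field-extension}(4) and then invoke the graded version of \cite[Theorem 5.9]{HIy}, whose proof carries over from the local case. The paper's proof is exactly these two steps; your additional sketch of how the Golod construction underlies \emph{op.\,cit.} is extra detail the paper does not reproduce.
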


\begin{proof}
By \cite[Theorem 5.9]{HIy}, the ring $l\otimes_{k}R$ is absolutely Koszul; now apply Lemma~\ref{lem:field-extension}(4).
\end{proof}

In the remainder of this section we establish some techniques for detecting when a ring has the Backelin-Roos property. Since the focus of this work is on Koszul algebras, we consider only this class of algebras, though some of the results (for example, Lemma~\ref{lem:br-dim} below) hold without this additional assumption. The Koszul property makes it easier to test when a morphism is Golod; see Remark~\ref{rem:Golod2lin}.

\begin{lem}
\label{lem:br-dim}
Assume that $k$ is an infinite field. Let $R$  be standard graded Koszul $k$-algebra. If $R$ has the Backelin-Roos property then there exists a Golod morphism $Q\to R$ where $Q$ is a complete intersection whose dimension and  codimension equal that of $R$. 
\end{lem}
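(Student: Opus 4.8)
The plan is to start from the hypothesis that $R$ has the Backelin-Roos property, so there exists a surjective Golod morphism $\psi\colon Q\to R$ with $Q$ a standard graded complete intersection. Since $R$ is Koszul, Remark~\ref{rem:Golod2lin} forces $Q$ to be Koszul as well, hence $Q$ is defined by quadrics and $\Ker(\psi)$ has a $2$-linear $Q$-free resolution. The issue is that a priori $\dim Q$ and $\codim Q$ may exceed $\dim R$ and $\codim R$. My goal is to cut down $Q$ by a regular sequence so as to match these invariants while preserving both the complete intersection property of the source and the Golod property of the map.

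First I would pass to a convenient presentation. Write $Q=P/(\fb)$ where $P=k[x_1,\dots,x_n]$ is a polynomial ring on $n=\embdim Q$ variables and $\fb=f_1,\dots,f_c$ is a regular sequence of quadrics, so that $\codim Q=c$ and $\dim Q=n-c$. Composing with $\psi$, one has a surjection $P\to R$, and since $\Ker(\psi)\subseteq (Q_{\ge 1})^2$ and the $f_i$ are quadrics, the whole kernel of $P\to R$ lies in $(P_{\ge 1})^2$; in particular $\embdim R\le n$. The mismatch to be corrected is captured by the differences $\dim Q-\dim R$ and $\codim Q-\codim R$, and because $\dim Q-\codim Q=n-2c$ need not equal $\dim R-\codim R$, I expect to have to adjust in two stages.

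The key mechanism is to choose, using that $k$ is infinite, a sufficiently generic linear form $\ell\in Q_1$ whose image in $R$ is a nonzerodivisor on $R$ (a generic hyperplane section, available when $\dim R\ge 1$), and simultaneously a nonzerodivisor on $Q$. Passing to $Q/(\ell)\to R/(\ell)$ lowers both dimensions by one and preserves the complete intersection property of the source. The point I would need to verify is that the Golod property descends along such a generic linear reduction: this should follow because killing a nonzerodivisor that is part of a system of parameters does not disturb the $2$-linearity of the resolution of the kernel. Concretely, if $\ell$ is a nonzerodivisor on both $Q$ and $R$, then $\Ker(Q/(\ell)\to R/(\ell))\cong \Ker(\psi)/\ell\Ker(\psi)$ inherits a $2$-linear resolution over $Q/(\ell)$, and by Remark~\ref{rem:Golod2lin} the reduced map stays Golod with $Q/(\ell)$ still Koszul and a complete intersection. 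Iterating $\dim Q-\dim R$ times brings the dimension down to $\dim R$.

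Having equalized dimensions, it remains to match the codimension. After the reductions we have a Golod map $Q'\to R$ with $Q'$ a complete intersection, $\dim Q'=\dim R$, and $Q'$ Koszul; the number of defining quadrics of $Q'$ is $\codim Q'=\embdim Q'-\dim R$. The hardest part, and the step I would scrutinize most carefully, is ensuring $\embdim Q'=\embdim R$ so that $\codim Q'=\codim R$. The danger is that $Q'$ might be defined on more variables than $R$ needs. Here I would argue that since the map $Q'\to R$ is minimal (its kernel lies in the square of the maximal ideal, which is preserved under the reductions above), one has $\embdim Q'=\embdim R$ automatically: a minimal surjection of standard graded algebras induces an isomorphism on the degree-one parts, hence $\dim_k Q'_1=\dim_k R_1$. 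Therefore $\codim Q'=\embdim R-\dim R=\codim R$, and $Q'\to R$ is the desired Golod morphism from a complete intersection whose dimension and codimension both equal those of $R$.
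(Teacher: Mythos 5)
Your reduction step does not do what you need it to do, and this is a genuine gap rather than a presentational one. If $\ell\in Q_1$ is chosen to be a nonzerodivisor on \emph{both} $Q$ and $R$, then passing to $Q/(\ell)\to R/(\ell)$ lowers $\dim Q$ and $\dim R$ by one \emph{each}, so the discrepancy $\dim Q-\dim R$ is unchanged; the iteration never closes the gap. Worse, it replaces the target: at the end you have a Golod map onto some Artinian-type quotient $R/(\ell_1,\dots,\ell_t)$, not onto $R$, whereas the lemma demands a complete intersection mapping Golodly onto $R$ itself. (You could try to climb back up via Lemma~\ref{lem:br-ascent}, but you never invoke it, and your final sentence asserts the map lands on $R$ when it does not.) Note also that your ``two stages'' are really one: since $\Ker\psi\subseteq (Q_{\geqslant 1})^2$ the embedding dimensions of $Q$ and $R$ already coincide, so $\dim Q-\dim R=\codim R-\codim Q$ automatically, and the only defect to repair is $\dim Q>\dim R$, equivalently $\codim Q<\codim R$.

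To shrink $\dim Q$ while keeping $R$ as the quotient you must kill an element of the kernel of $Q\to R$, and that kernel contains no linear forms; this is why the paper's proof instead picks a \emph{quadric} $f$ in the defining ideal of $R$ that is regular on $Q$ (prime avoidance, using $k$ infinite) and sets $Q'=Q/(f)$. That move raises the codimension by one, lowers the dimension by one, and leaves $R$ as a quotient of $Q'$ --- but now $f$ is a zerodivisor (indeed zero) in $R$, so your ``nonzerodivisor on source and target'' mechanism for descending Golodness is unavailable. The paper has to do genuine work here: it computes $\Tor^{Q}(R,Q')$, which is $R$ in degree $0$ and $R(-2)$ in degree $1$, and then runs the bookkeeping of Remark~\ref{rem:tis} together with an induction to get $t_i^{Q'}(R)\le i+1$, whence $Q'\to R$ is Golod by Remark~\ref{rem:Golod2lin}. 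Your proposal contains no substitute for this step, which is the heart of the lemma.
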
 

\begin{proof}
By hypothesis, there exists a surjective Golod morphism $P \to R$ with $P$ a standard graded complete intersection ring. The Golod property implies that the kernel of $P\to R$ is contained in $P_{\geqslant 2}$. Hence the embedding dimensions of $P$ and $R$ coincide. It thus suffices to prove that one can choose $P$ such that its dimension equals that of $R$.

Suppose $\dim P>\dim R$. Let  $I$ and $J$ be the defining ideals of $P$ and $R$ as quotients of the same polynomial ring. We have $I\subset J$ and $\height I<\height J$, and $I$ is generated by a regular sequence of quadrics. As $k$ is infinite, by prime avoidance there exists a quadric $f$ in $J$ that is regular over $P$. Set $Q=P/(f)$. Then $Q$ is a complete intersection of quadrics and hence Koszul. We prove that $Q\to R$ is Golod by verifying that $t_i^{Q}(R)\leq i+1$ for every $i$; see Remark~\ref{rem:Golod2lin}.

Let $F$ be the minimal graded free resolution of $R$ as a $P$-module and set $G=F\otimes_P Q$. Then one has $H(G)=\Tor^P(R, Q)$.  Since $f$ is regular over $P$ and $fR=0$ it follows that 
\[
H_i(G)=\left\{
\begin{array}{ll}
R & \mbox{ for } i=0\\ 
R(-2)  & \mbox{ for } i=1\\
0 & \mbox{ for } i>1
\end{array}
\right.
\]
Applying Remark~\ref{rem:tis} to the  complex of $Q$-modules $G$ and taking into consideration that  $G_i$ is $Q$-free and generated in degree $0$ for $i=0$ and in degree $i+1$ when $i>0$, one obtains: 
\[ 
t_i^{Q}(R)  \leq \max\{ i+1, t_{i-2}^{Q}(R)+2\}\,.
\]
Since $t_0^{Q}(R)=0$ and $t_1^{Q}(R)=2$ by construction, it follows by induction that $t_i^{Q}(R)\leq i+1$. 
\end{proof}

The following example shows that when $R$ has the Backelin-Roos property not all the maximal regular sequences contained in its defining ideal induce a Golod map. 

\begin{ex} Set $P=k[a,b,c,d]$, $I=(a^2, b^2, ad,  ac, bd )$ and  $R=P/I$.  Note that $(ad,  ac, bd)$ has a $2$-linear resolution over $P$. Hence, by virtue of Theorem~\ref{thm:ciplus2linear} proved in the next section,  the algebra  $R$ has the Backelin-Roos property.  On the other hand $J=(ac, bd)$ is a  maximal  regular sequence in $I$ and the map $Q:=P/J\to R$ is not Golod since $\beta_{2,4}^Q(R)=1$. 
\end{ex} 

\begin{lem}
\label{lem:br-ascent}
Let $R$ be a standard graded  $k$-algebra and $x_{1},\dots,x_{n}$ a regular sequence  in $R_{1}$. If $R/(x_{1},\dots,x_{n})$ is a Koszul algebra with the Backelin-Roos property, then so is $R$.
\end{lem}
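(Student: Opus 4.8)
The plan is to prove the statement by induction on $n$, reducing to the case of a single linear non-zerodivisor, and then to lift the Backelin--Roos data of $\bar R:=R/(x_1,\dots,x_n)$ across the quotient map. For the induction, the case $n=0$ is vacuous; assuming the result for sequences of length $n-1$, I would apply it to the ring $R/(x_1)$ equipped with the images of $x_2,\dots,x_n$---which form a regular sequence in $(R/(x_1))_1$ with quotient again $\bar R$---to learn that $R/(x_1)$ is Koszul with the Backelin--Roos property, and then invoke the case $n=1$ for $R$ and $x_1$. So it suffices to treat $n=1$; write $x=x_1$ and $\bar R=R/(x)$.

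Since $x\in R_1$ is a non-zerodivisor, it lies in $\mm\setminus\mm^2$, so $\embdim R=\embdim\bar R+1$. I would fix a minimal presentation $P\twoheadrightarrow R$ by a polynomial ring; as $P_1\cong R_1$, after a linear change of coordinates $x$ becomes one of the variables of $P$, so that $\bar P:=P/(x)$ is a polynomial ring furnishing a minimal presentation $\bar P\twoheadrightarrow\bar R$. Writing $R=P/I$ and $\bar R=\bar P/\bar I$ with $\bar I$ the image of $I$, the hypothesis supplies a Golod surjection $\bar Q\to\bar R$ with $\bar Q$ a complete intersection; because $\bar R$ is Koszul, $\bar Q$ is Koszul and hence defined by quadrics, and since a Golod kernel lies in the square of the maximal ideal one has $\embdim\bar Q=\embdim\bar R$. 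Thus $\bar Q=\bar P/\bar J$ with $\bar J=(\bar g_1,\dots,\bar g_c)\subseteq\bar I$ a regular sequence of quadrics. As $\bar I_2$ is the image of $I_2$, I would lift each $\bar g_i$ to a quadric $g_i\in I$ and set $Q=P/(g_1,\dots,g_c)$.

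Because $x$ is a non-zerodivisor of $P$ and $\bar g_1,\dots,\bar g_c$ is a regular sequence in $\bar P=P/(x)$, the sequence $x,g_1,\dots,g_c$ is regular in $P$; permuting shows $g_1,\dots,g_c$ is regular and that $x$ is a non-zerodivisor on $Q$. Hence $Q$ is a complete intersection of quadrics with $Q/(x)=\bar Q$, and since $g_i\in I$ the surjection $P\to R$ factors through a surjection $\varphi\colon Q\to R$ that reduces modulo $x$ to $\bar Q\to\bar R$. It remains to see that $\varphi$ is Golod; as $Q$ is Koszul, by Remark~\ref{rem:Golod2lin} this amounts to the inequality $t_i^Q(R)\le i+1$ for all $i$. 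Let $G$ be the minimal graded $Q$-free resolution of $R$. Since $x$ is a non-zerodivisor on $R$ one has $\Tor^Q_{\ge 1}(\bar Q,R)=0$ and $\bar Q\otimes_Q R=\bar R$, so $G\otimes_Q\bar Q=G/xG$ is a graded $\bar Q$-free resolution of $\bar R$, and it is minimal because the entries of the differentials of $G$ lie in $\mm$ and map into the maximal ideal of $\bar Q$. Therefore $\beta^Q_{i,j}(R)=\beta^{\bar Q}_{i,j}(\bar R)$ for all $i,j$, so $t_i^Q(R)=t_i^{\bar Q}(\bar R)\le i+1$, the last inequality coming from Remark~\ref{rem:Golod2lin} applied to the Golod map $\bar Q\to\bar R$. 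A final application of Remark~\ref{rem:Golod2lin} to $\varphi$ then shows that $R$ is Koszul and $\varphi$ is Golod, and as $Q$ is a complete intersection, $R$ has the Backelin--Roos property.

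The two places that need genuine care, and which I regard as the heart of the argument, are the liftings. First, that the lifted quadrics $g_1,\dots,g_c$ again form a regular sequence---so that $Q$ is truly a complete intersection and $x$ survives as a non-zerodivisor on $Q$---which rests on the standard principle that a regular sequence modulo a non-zerodivisor lifts. Second, the identity $\beta^Q_{i,j}(R)=\beta^{\bar Q}_{i,j}(\bar R)$, which hinges on $x$ being a non-zerodivisor on $R$ so that reducing the minimal $Q$-resolution of $R$ modulo $x$ remains exact and minimal; this is precisely what transports the Golod bound from $\bar Q\to\bar R$ to $\varphi$. Everything else is bookkeeping with embedding dimensions together with the two invocations of Remark~\ref{rem:Golod2lin}.
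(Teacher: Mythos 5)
Your proof is correct and follows essentially the same route as the paper's: reduce to $n=1$, lift the quadrics defining the complete intersection $\bar Q$ to quadrics in the defining ideal of $R$, check that the lifts form a regular sequence with $x$ regular on the resulting $Q$, and transfer the bound $t_i^{\bar Q}(\bar R)\le i+1$ to $t_i^{Q}(R)$ via the change-of-rings isomorphism before invoking Remark~\ref{rem:Golod2lin}. The only differences are cosmetic (you lift abstractly via the surjection $I_2\to \bar I_2$ and re-derive the $\Tor$ isomorphism by reducing the minimal resolution modulo $x$, where the paper writes the lifts explicitly as $f_i-y_eg_i$ and quotes the isomorphism directly).
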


\begin{proof}
It suffices to verify the assertion for $n=1$. So assume $x\in R_{1}$ is a non-zerodivisor and that there exists a surjective morphism of standard graded $k$-algebras 
\[
\varphi \colon B\to R/(x)
\]
such that $B$ is a complete intersection, necessarily of quadrics. Let $x_1,\dots, x_e$ be a $k$-basis of $R_1$  with  $x=x_e$. We may suppose  $B=P/J$  where $P=k[y_1,\dots,y_{e-1}]$ is the polynomial ring, $J$ is generated by a regular sequence in $P$, and that the residue class of $y_{i}$ is the residue class of $x_{i}$, for $i=1,\dots, e-1$.

Say $J=(f_1,\dots, f_c)$ and consider the morphism of $k$-algebras $\pi\colon P[y_e]\to R$ that sends $y_i$ to $x_i$ for $i=1,\dots,e$. Since $f_i(x_1,\dots,x_{e-1})=0 \mod(x_e)$, there exist homogeneous elements  $g_i$ in $P$ such that $f_i(x_1,\dots,x_{e-1})=x_eg_i(x_1,\dots,x_e)$, that is to say, $f_i-y_eg_i$ are in $\Ker\pi $. Set 
\[
A=P[y_e]/J_1\quad\text{where}\quad J_1=(f_i-y_eg_i\mid  i=1,\dots, c)\,.
\]
Since $J_1\subset \Ker \pi$ we have an induced morphism of $k$-algebras $\vartheta\colon A\to R $ sending $y_i \mod J_1$ to $x_i$ for $i=1,\dots,e$.  Now, since 
\[
J_1+(y_e)=J+(y_e)=(f_1,\dots,f_c,y_e)
\]
is generated by regular sequence, it follows that  $f_1-y_eg_1, \dots, f_c-y_eg_c$ is a regular sequence of quadrics in $P[y_{e}]$, and that $y_e$ is regular on $A$. Since $A/(y_e)=B$ there is an isomorphism of graded $k$-vector spaces
\[
\Tor_i^A(k,R)\cong \Tor_i^B(k,R/(x)).
\]
Hence $t^{A}_{i}(R) =t^{B}_{i}(R/(x))$. Noting that $A$ and $B$ are Koszul algebras, it follows from Remark~\ref{rem:Golod2lin} that since $B\to R/(x)$ is Golod, so is $A\to R$, and that $R$ is Koszul.
\end{proof}

\begin{thm}
\label{thm:br-fibre-product}
Let $S$ and $T$ be standard graded Koszul $k$-algebras. If  $S$ and $T$ have the Backelin-Roos property then so does the fibre product $S\times_k T$.
\end{thm}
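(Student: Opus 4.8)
The plan is to exhibit a concrete complete intersection $Q$ mapping onto $S\times_k T$ and to verify Golodness via the numerical criterion of Remark~\ref{rem:Golod2lin}. Since $S$ and $T$ are Koszul and have the Backelin--Roos property, Remark~\ref{rem:Golod2lin} provides Golod surjections $Q_S\to S$ and $Q_T\to T$ with $Q_S=k[\xb]/I$ and $Q_T=k[\yb]/J$ complete intersections of quadrics, where $I$ and $J$ are regular sequences of quadrics in disjoint sets of variables, and where the kernels $\mathfrak a=\Ker(Q_S\to S)$ and $\mathfrak b=\Ker(Q_T\to T)$ have $2$-linear free resolutions over $Q_S$ and $Q_T$ respectively. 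The key point is that the complete intersection to use is the \emph{tensor product} $Q:=Q_S\otimes_k Q_T=k[\xb,\yb]/(I+J)$, and \emph{not} the fibre product $Q_S\times_k Q_T$, which is in general not a complete intersection: indeed $I\cup J$ is a regular sequence of quadrics in $k[\xb,\yb]$, so $Q$ is a complete intersection of quadrics, hence Koszul. From the standard presentation of $S\times_k T$ as a quotient of $k[\xb,\yb]$ one has $I+J\subseteq \Ker(k[\xb,\yb]\to S\times_k T)$, and so there is a natural surjection $Q\twoheadrightarrow S\times_k T$; by Remark~\ref{rem:Golod2lin} it then suffices to prove $t^Q_i(S\times_k T)\le i+1$ for all $i\ge 0$.

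The technical heart is a change-of-rings estimate for $S$ and $T$ viewed as $Q$-modules. First I would record that $Q_S$ has a \emph{linear} free resolution over $Q$, that is $t^Q_i(Q_S)=i$ for all $i$. This holds because $Q_S=Q\otimes_{Q_T}k$, the ring $Q$ is free (in particular flat) as a $Q_T$-module, and $k$ has a linear resolution over the Koszul ring $Q_T$; tensoring that resolution up to $Q$ yields a linear $Q$-free resolution of $Q_S$, and symmetrically $t^Q_i(Q_T)=i$. Now regard $S$ as a $Q$-module through the composition $Q\to Q_S\to S$, and let $G$ be the minimal $Q_S$-free resolution of $S$; by hypothesis it is $2$-linear, so $G_0=Q_S$ and $G_p$ is generated in degree $p+1$ for $p\ge 1$. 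Viewing $G$ as a complex of $Q$-modules with $H_0(G)=S$ and $H_p(G)=0$ for $p>0$, Remark~\ref{rem:tis} gives $t^Q_i(S)\le\max_{0\le j\le i}t^Q_j(G_{i-j})$. Since $G_{i-j}$ is $Q_S$-free generated in degree $0$ (if $j=i$) or in degree $i-j+1$ (if $j<i$), and $t^Q_j(Q_S)=j$, each such term equals $i$ or $i+1$, whence $t^Q_i(S)\le i+1$; symmetrically $t^Q_i(T)\le i+1$.

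Finally I would pass to the fibre product itself. As $Q$-modules the augmentation ideal of $S\times_k T$ splits as $(S\times_k T)_{\ge 1}=\bar S_{\ge 1}\oplus \bar T_{\ge 1}$, where $\bar S_{\ge 1}$ and $\bar T_{\ge 1}$ are the augmentation ideals of $S$ and $T$; each is a $Q$-submodule, the variables $\yb$ acting as zero on $\bar S_{\ge 1}$ and the $\xb$ as zero on $\bar T_{\ge 1}$. The short exact sequence $0\to \bar S_{\ge 1}\to S\to k\to 0$ of $Q$-modules, together with $t^Q_i(k)=i$, gives $t^Q_i(\bar S_{\ge 1})\le\max\{t^Q_i(S),t^Q_{i+1}(k)\}=i+1$, and likewise $t^Q_i(\bar T_{\ge 1})\le i+1$. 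Feeding this into the short exact sequence $0\to \bar S_{\ge 1}\oplus \bar T_{\ge 1}\to S\times_k T\to k\to 0$ yields $t^Q_i(S\times_k T)\le\max\{i+1,i\}=i+1$, as required. By Remark~\ref{rem:Golod2lin} the surjection $Q\to S\times_k T$ is then Golod and $S\times_k T$ is Koszul, so $S\times_k T$ has the Backelin--Roos property.

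I expect the main obstacle to be the two conceptual choices rather than the homological bookkeeping: recognising that the right complete intersection is $Q_S\otimes_k Q_T$ (and checking it surjects onto the fibre product), and establishing that $Q_S$ has a linear $Q$-resolution, which is exactly what converts the $2$-linearity of $\mathfrak a$ over $Q_S$ into the estimate $t^Q_i(S)\le i+1$ over $Q$. Once $t^Q_i(Q_S)=i$ is in hand, Remark~\ref{rem:tis} and the two short exact sequences make the remaining estimates routine.
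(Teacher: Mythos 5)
Your proposal is correct and follows essentially the same route as the paper: the paper also takes $Q=Q_S\otimes_k Q_T$ and verifies Golodness by bounding $t^Q_i(S)$, $t^Q_i(T)$ and $t^Q_i(k)$ by $i+1$. The only cosmetic differences are that the paper obtains $t^Q_i(S)\le i+1$ by observing that the minimal $Q$-free resolution of $S=S\otimes_k k$ is the tensor product of the $Q_S$-resolution of $S$ with the $Q_T$-resolution of $k$ (rather than via Remark~\ref{rem:tis} and the linear $Q$-resolution of $Q_S$), and that it assembles the final estimate from the single exact sequence $0\to S\times_k T\to S\oplus T\to k\to 0$ instead of your two-step splitting of the augmentation ideal.
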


\begin{proof}
Assume that $S$ and $T$ have the Backelin-Roos property, so that there are  Golod morphisms $S_{1}\to S$ and $T_{1}\to T$ where $S_{1}$ and where $T_1$ are complete intersections, necessarily of quadrics, for $S$ and $T$ are Koszul. Set $Q=S_1\otimes_k T_1$ and $R=S\times_{k}T$. Observe that $Q$ is a complete intersection of quadrics, and in particular Koszul. Consider the composite morphism 
 \[
Q\to S \otimes_k T \to R\,.
\]
We claim that this map is Golod, and verify it by proving that $t^Q_i(R)\leq i+1$ for every $i$; see Remark~\ref{rem:Golod2lin}. To that end, consider the exact sequence of $Q$-modules
\[ 
0\to R \to S \oplus T \to k\to 0\,.
\]
From this it follows that for $i\in\N$ there are inequalities 
\[
t^Q_i(R)\leq \max\{ t^{Q}_i(S), t^{Q}_i(T), t^{Q}_{i+1}(k) \}\,.
\]
It suffices to verify that each term on the right is bounded by $i+1$.
 
Now, since $Q$ is Koszul, one has $t^Q_{i+1}(k) =i+1$ for $i\ge 0$.
 
As a $Q$-module, one can identify $S$ with $S\otimes_k k$, so its minimal graded free resolution is obtained by tensoring,  over $k$, the minimal graded free resolution of $S$ as a $S_1$-module with the minimal graded free resolution of $k$ as a $T_1$-module. It follows that 
\[
t_i^{Q}(S)\leq \max\{ t^{S_1}_j(S)+ t^{T_1}_{i-j} (k) \mid j=0,\dots i\}.
\]
Since $t^{S_1}_j(S)\leq j+1$ and $t^{T_{1}}_{i-j} (k)=i-j$, by assumption, one obtains $t_i^Q(S)\leq i+1$. 
 
By symmetry $t_i^Q(T)\leq i+1$. 
 \end{proof}
 
 \begin{lem}
\label{lem:bcr-regseq}
Let $R$ be a standard graded Koszul $k$-algebra that has  the Backelin-Roos property.  Let $Q$ be a complete intersection of quadrics and $Q\to R$ a Golod map. 
\begin{itemize}
\item[(1)]  If a quadric $f\in Q$ is $Q$-regular and $R$-regular then $Q/fQ\to R/fR$ is a  Golod map  and hence $R/fR$ has the Backelin-Roos property. 
\item[(2)] $R[x]$ and $R[x]/(x^2)$ are Koszul and have the Backelin-Roos property.
\end{itemize}
\end{lem}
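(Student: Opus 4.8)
The plan is to prove (1) by a direct base-change computation and then to deduce (2) from (1). Throughout I write $\bar Q = Q/fQ$ and $\bar R = R/fR$, and I let $F$ denote the minimal graded $Q$-free resolution of $R$. Since $Q$ is a complete intersection of quadrics and $f\in Q_2$ is $Q$-regular, lifting $f$ to the defining polynomial ring of $Q$ shows that $\bar Q$ is again a complete intersection of quadrics, and in particular Koszul. By Remark~\ref{rem:Golod2lin} it therefore suffices to prove the numerical bound $t_i^{\bar Q}(\bar R)\le i+1$ for all $i$ in order to conclude that $\bar Q\to \bar R$ is Golod and that $\bar R$ has the Backelin-Roos property.

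The key step is the claim that $\bar F := F\otimes_Q \bar Q$ is the minimal graded $\bar Q$-free resolution of $\bar R$. To see that it is a resolution I would compute its homology, which is $\Tor^Q(R,\bar Q)$, using the length-one $Q$-free resolution $0\to Q(-2)\xrightarrow{f} Q\to \bar Q\to 0$ afforded by the $Q$-regularity of $f$. Tensoring it with $R$ yields the complex $R(-2)\xrightarrow{f} R$: the $R$-regularity of $f$ gives $\Tor^Q_1(R,\bar Q)=0$, the vanishing $\Tor^Q_i(R,\bar Q)=0$ for $i\ge 2$ is automatic from the length, and $\Tor^Q_0(R,\bar Q)=\bar R$. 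Thus $\bar F$ is acyclic with $H_0(\bar F)=\bar R$. Minimality is preserved because the entries of the differentials of $F$ lie in $Q_{\ge 1}$ and $fQ\subseteq Q_{\ge 2}$, so their images lie in $\bar Q_{\ge 1}$. Consequently the graded Betti numbers are unchanged, $\beta^{\bar Q}_{i,j}(\bar R)=\beta^Q_{i,j}(R)$, whence $t_i^{\bar Q}(\bar R)=t_i^Q(R)$. Since $Q\to R$ is a Golod map of Koszul algebras, Remark~\ref{rem:Golod2lin} gives $t_i^Q(R)\le i+1$, and (1) follows.

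For (2), I would first treat $R[x]$ by the same base-change idea along the flat extension $k\to k[x]$. Set $Q[x]=Q\otimes_k k[x]$, which is a complete intersection of quadrics, hence Koszul, and observe that $F[x]=F\otimes_Q Q[x]$ is the minimal graded $Q[x]$-free resolution of $R[x]$, so that $t_i^{Q[x]}(R[x])=t_i^Q(R)\le i+1$. By Remark~\ref{rem:Golod2lin} the map $Q[x]\to R[x]$ is Golod and $R[x]$ is Koszul, so $R[x]$ has the Backelin-Roos property. To handle $R[x]/(x^2)$ I would simply apply part (1) to the Golod map $Q[x]\to R[x]$ just produced, taking the quadric $f=x^2$: it is regular on both $Q[x]$ and $R[x]$ because $x$ is, so $Q[x]/(x^2)\to R[x]/(x^2)$ is Golod with source a complete intersection of quadrics, and $R[x]/(x^2)$ is Koszul with the Backelin-Roos property.

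I expect the main obstacle to be the verification in (1) that $\bar F$ stays a resolution, that is, that $\Tor^Q_{\ge 1}(R,\bar Q)$ vanishes; this is precisely where both hypotheses on $f$ (regularity on $Q$ and on $R$) are used, and it is the hinge on which the Betti-number equality, and hence everything else, turns. The remaining points---that $\bar Q$, $Q[x]$, and $Q[x]/(x^2)$ are complete intersections of quadrics, and that minimality of resolutions is preserved under the relevant base changes---are routine.
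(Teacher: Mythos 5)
Your proposal is correct and follows essentially the same route as the paper: base change the minimal $Q$-free resolution $F$ of $R$ along $Q\to Q/fQ$, use the regularity of $f$ on both $Q$ and $R$ to see that $F\otimes_Q Q/fQ$ remains a minimal resolution, conclude $t_i^{Q/fQ}(R/fR)=t_i^{Q}(R)\le i+1$ and invoke Remark~\ref{rem:Golod2lin}; part (2) is then the flat extension $Q[x]\to R[x]$ followed by part (1) applied to $f=x^2$, exactly as in the paper. Your write-up merely makes explicit the $\Tor$ computation that the paper leaves as "clearly".
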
 

\begin{proof}
(1) Set $A=R/fR$ and $B=Q/fQ$. Let $F$ be the minimal free resolution of $R$ as a $Q$-module. Since, by assumption, $f$ is a regular over $R$ and $Q$ we have that $F\otimes B$ is the minimal free resolution of $A$ as a $B$-module. Hence $t_i^B(A)=t_i^Q(R)=i+1$ for every $i>0$. Since $A$ is Koszul and $B$ is a complete intersection of quadrics, it follows that the map $B\to A$ is Golod and hence $A$ has the Backelin-Roos property. 

(2) The extension $Q[x]\to R[x]$ of  $Q\to R$ is clearly Golod. Hence $R[x]$ has the Backelin-Roos property. Then (1) implies that    $R[x]/(x^2)$  has the Backelin-Roos property as well.
\end{proof}

\begin{rem} 
In the table below we collect what we know concerning the transfer of the absolutely Koszul and of the Backelin-Roos properties with respect to standard algebraic operations. Let $R$ be a Koszul algebra, and let $\ell \in R_{1}$ and  $q\in R_{2}$ be non-zerodivisors.

\begin{table}[htdp]
\begin{center}
\begin{tabular}{|l|c|c|}
\hline
     &  Absolutely Koszul   &  Backelin-Roos \\
\hline\hline 
Fiber product  & yes (\ref{thm:fibre-product}) & yes (\ref{thm:br-fibre-product}) \\
\hline
Ascend from $R/\ell R$ to $R$  & yes (\ref{thm:glind-flatdim}) 
 & yes (\ref{lem:br-ascent})
  \\
\hline
Descend from $R$ to $R/\ell R$ & ?? & ??\\
\hline
Ascend from $R/qR$ to $R$  & yes (\ref{thm:glind-flatdim})  & ??  \\
\hline
Descend from $R$ to $R/qR$ & ?? & ??\\
\hline
Preserved by algebra retracts & yes (\ref{prop:retracts})  & ?? \\ 
\hline
Preserved by the extension $R\to R[x]/(x^2)$ & ?? & yes (\ref{lem:bcr-regseq})  \\
\hline
Preserved by the extension $R\to R[x]$ & yes (\ref{cor:poly})   & yes (\ref{lem:bcr-regseq})  \\
\hline
\end{tabular}
\end{center}
\end{table}%
 \end{rem}

Recall that $S$ is said to an algebra retract of $R$  if there are morphisms of rings $S\xrightarrow{\sigma}R\xrightarrow{\varphi}S$ such that the composition $\varphi\sigma$ is the identify on $S$.  Perhaps the most interesting open question related to the table above is the following. 
 
 \begin{quest} 
 \label{que:retract} 
 Let $R$ be a  standard graded $k$-algebra that is Koszul and has the Backelin-Roos  property. If $S$ is an algebra retract of $R$, does it also have the Backelin-Roos property? 
 \end{quest} 
 
The following is a numerical  obstruction to the Backelin-Roos property in terms of Hilbert functions. It is  reminiscent of the one for the Koszul property itself, namely,  the formal power series $H_{R}(-t)^{-1}$ should have non-negative coefficients.

\begin{prop}
\label{prop:HilbObstruction}
Let $R$ be a standard graded Koszul $k$-algebra with the Backelin-Roos property. The following formal power series, where $c$ is the codimension of $R$ and  $h_R(z)$ its $h$-polynomial, has non-negative coefficients:
\[
1-\frac{h_R(-z)}{(1-z)^c}.
\]
\end{prop}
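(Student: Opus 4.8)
The plan is to convert the Backelin--Roos data into a single identity of Hilbert series and then recognise the coefficients of $1-h_R(-z)/(1-z)^c$ as graded Betti numbers of $R$ over a complete intersection, which are manifestly non-negative.

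First I would reduce to the case where $k$ is infinite. An algebraic field extension $k\subseteq l$ changes neither the $h$-polynomial nor the codimension of $R$, and it preserves the Backelin--Roos property: a Golod surjection $Q\to R$ with $Q$ a complete intersection of quadrics base-changes to $l\otimes_k Q\to l\otimes_k R$, which is again a complete intersection of quadrics mapping onto $l\otimes_k R$, and this map is still Golod because flat base change along $k\subseteq l$ preserves every graded Betti number entering the Golod equality (cf. Lemma~\ref{lem:field-extension}). So it suffices to prove the claim for $l\otimes_k R$, and I may assume $k$ infinite. Then I apply Lemma~\ref{lem:br-dim} to secure a Golod morphism $\varphi\colon Q\to R$ with $Q$ a complete intersection of quadrics whose codimension \emph{and} dimension agree with those of $R$; write $c=\codim R=\codim Q$ and $d=\dim R=\dim Q$, so that $\embdim Q=\embdim R=c+d$ and $Q$ is a complete intersection of $c$ quadrics in $c+d$ variables. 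Consequently
\[
H_Q(z)=\frac{(1-z^2)^c}{(1-z)^{c+d}}=\frac{(1+z)^c}{(1-z)^d},\qquad\text{i.e. } h_Q(z)=(1+z)^c.
\]

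Next I would extract two facts from the minimal graded $Q$-free resolution of $R$. Taking the alternating sum of Hilbert series along it gives the Euler-characteristic identity $H_R(z)=H_Q(z)\cdot P^Q_R(z,-1)$, hence
\[
P^Q_R(z,-1)=\frac{H_R(z)}{H_Q(z)}=\frac{h_R(z)}{(1+z)^c}.
\]
On the other hand, since $R$ is Koszul and $\varphi$ is Golod, Remark~\ref{rem:Golod2lin} gives that $\Ker\varphi$ has a $2$-linear $Q$-free resolution; therefore $\beta^Q_{i,j}(R)=0$ except for $(i,j)=(0,0)$ and for the diagonal $j=i+1$ with $i\geq 1$. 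Writing $b_i=\beta^Q_{i,i+1}(R)\geq 0$ this yields $P^Q_R(s,z)=1+\sum_{i\geq1} b_i\,s^{i+1}z^i$, so that $P^Q_R(z,-1)=1+\sum_{i\geq1}(-1)^i b_i\,z^{i+1}$. Note that this argument uses the Golod hypothesis only through the diagonal concentration of the resolution, not through the full Poincar\'e-series equality.

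Combining the two expressions for $P^Q_R(z,-1)$ gives $h_R(z)/(1+z)^c=1+\sum_{i\geq1}(-1)^i b_i z^{i+1}$. The final step is the substitution $z\mapsto -z$: the left side becomes $h_R(-z)/(1-z)^c$, while on the right the two sign factors collapse since $(-1)^i(-z)^{i+1}=-z^{i+1}$, giving
\[
\frac{h_R(-z)}{(1-z)^c}=1-\sum_{i\geq1} b_i\,z^{i+1},
\qquad\text{hence}\qquad
1-\frac{h_R(-z)}{(1-z)^c}=\sum_{i\geq1}\beta^Q_{i,i+1}(R)\,z^{i+1},
\]
which has non-negative coefficients. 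The only genuinely delicate point is the first reduction, namely arranging a complete intersection $Q$ of quadrics with codimension \emph{exactly} $c$ via Lemma~\ref{lem:br-dim} (this is why $k$ is made infinite): an arbitrary complete intersection surjecting onto $R$ would leave a spurious factor $(1\pm z)^{c-\codim Q}$ that destroys positivity, as a direct recomputation shows. Everything after the reduction is bookkeeping with Euler characteristics and the diagonal shape of the resolution of $R$ over $Q$.
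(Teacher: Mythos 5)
Your proposal is correct and follows essentially the same route as the paper: reduce to $k$ infinite, invoke Lemma~\ref{lem:br-dim} to get a Golod surjection from a complete intersection $Q$ of quadrics with the same dimension and codimension as $R$, use Remark~\ref{rem:Golod2lin} to see that the minimal $Q$-free resolution of $R$ is $2$-linear, and then equate the two expressions for $H_R(z)$ to identify the coefficients of $1-h_R(-z)/(1-z)^c$ with the Betti numbers $\beta^Q_{i,i+1}(R)$. The only cosmetic difference is that you package the Euler-characteristic computation via $P^Q_R(z,-1)$, whereas the paper writes out the alternating sum directly.
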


\begin{proof}
Enlarging the field if necessary, we can assume that $k$ is infinite. Let $Q\to R$ be a Golod morphism where $Q$ is a standard graded complete intersection, with dimension and codimension equal to that of $R$; see Lemma~\ref{lem:br-dim}. Then, by Remark~\ref{rem:Golod2lin}, the minimal graded free resolution of $R$ over $Q$ has the form
\[
\cdots \to Q(-i-1)^{\beta_i}\to Q(-i)^{\beta_{i-1}} \to \cdots \to Q(-2)^{\beta_1}\to Q \to 0\,.
\]
Therefore the Hilbert series of $R$ is
\[
H_R(z)=H_Q(z)(1-\beta_1z^2+\beta_2z^3-\cdots)=\frac{(1-z^2)^c}{(1-z)^m}(1-\beta_1z^2+\beta_2z^3-\cdots)\,,
\]
where $m=\embdim R$. On the other hand  
\[
H_R(z)=\frac{h_R(z)}{(1-z)^{m-c}}.
\]
Equating the two expressions for $H_R(z)$ yields an equality
\[
1-\frac{h_R(z)}{(1+z)^c}=\beta_1z^2-\beta_2z^3+\cdots\,.
\]
Replacing $z$ by $-z$ yields the desired statement.
\end{proof}

\begin{cor}
\label{cor:HilbObstruction}
Let $R$ be a standard graded $k$-algebra,  let $h_R(z)$ be its $h$-polynomial.  Let $a\in \N$ be the order of vanishing of $h_R(z)$ at $-1$ so that  $h_{R}(z)=g(z)(1+z)^{a}$  with  $g(z)\in \Z[z]$ and $g(-1)\neq 0$. If $R$ is Koszul with  the Backelin-Roos property and  it is not a complete intersection then 

\[
g(-1)<0.
\]
\end{cor}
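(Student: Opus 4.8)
My plan is to deduce the inequality $g(-1)<0$ from Proposition~\ref{prop:HilbObstruction} by examining the relevant power series as the real variable approaches $1$. First I would pass to an infinite field via Lemma~\ref{lem:field-extension}; this affects neither the hypotheses nor $h_R(z)$, since the Hilbert series is preserved under the flat base change $R\rightsquigarrow l\otimes_k R$. Then Lemma~\ref{lem:br-dim} provides a Golod morphism $Q\to R$ with $Q$ a complete intersection of quadrics satisfying $\dim Q=\dim R$ and $\codim Q=\codim R=c$. Writing $\beta_i=\dim_k\Tor^Q_i(k,R)$, the computation carried out in the proof of Proposition~\ref{prop:HilbObstruction} yields the identity of formal power series
\[
\psi(z):=1-\frac{h_R(-z)}{(1-z)^c}=\sum_{i\ge 1}\beta_i z^{i+1},
\]
so that $\psi$ has non-negative coefficients and, in particular, $\psi(z)\ge 0$ for real $z\in[0,1)$. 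Inserting $h_R(-z)=g(-z)(1-z)^a$ rewrites this as $\psi(z)=1-g(-z)(1-z)^{a-c}$.

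The crux is the structural claim that $\projdim_Q R\ge 2$. Since $R$ is not a complete intersection it is not isomorphic to $Q$, so $\Ker(Q\to R)\ne 0$ and $\projdim_Q R\ge 1$. Suppose $\projdim_Q R=1$. Then $\Ker(Q\to R)$, being the image of an injective map from a nonzero free module, is itself a nonzero free $Q$-module; realised as an ideal of $Q$ it therefore contains a non-zerodivisor. On the other hand $\dim R=\dim Q$ forces $\Ker(Q\to R)$ to be contained in a minimal prime of the Cohen-Macaulay ring $Q$, so all of its elements are zerodivisors --- a contradiction. Thus $\projdim_Q R\ge 2$, and consequently $\beta_1,\beta_2\ge 1$.

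To read off the sign of $g(-1)$ I would distinguish two cases according to the finiteness of $\projdim_Q R$. Note that $B(z):=h_R(z)/(1+z)^c=g(z)/(1+z)^{c-a}$ is the alternating generating function of the $\beta_i$, and it is a polynomial exactly when $a\ge c$; hence $\projdim_Q R=\infty$ is equivalent to $a<c$. In the infinite case $a<c$ I would use only the inequality $\psi(z)\ge 0$, that is $g(-z)/(1-z)^{c-a}\le 1$ for $z\in[0,1)$: as $z\to 1^-$ the left-hand side would tend to $+\infty$ were $g(-1)>0$, which is impossible, so $g(-1)<0$. In the finite case $a\ge c$ the series $\psi$ is a polynomial with $\psi(1)=\sum_{i\ge 1}\beta_i\ge\beta_1+\beta_2\ge 2$; evaluating the identity $\psi(z)=1-g(-z)(1-z)^{a-c}$ at $z=1$ gives $\psi(1)=1$ when $a>c$, contradicting $\psi(1)\ge 2$, and gives $\psi(1)=1-g(-1)$ when $a=c$, whence $g(-1)=1-\psi(1)\le -1<0$.

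The main obstacle is the structural step $\projdim_Q R\ge 2$: the whole argument turns on knowing that at least two of the Betti numbers $\beta_i$ are nonzero --- so that $\psi(1)\ge 2$ rather than merely $\psi(1)\ge 1$ --- and on excluding the degenerate alternatives $\projdim_Q R\in\{0,1\}$. Both points depend essentially on the simultaneous matching of dimension and codimension furnished by Lemma~\ref{lem:br-dim}, together with the hypothesis that $R$ is not a complete intersection. This hypothesis is genuinely needed: for a Koszul complete intersection one has $h_R(z)=(1+z)^c$, that is $a=c$ and $g(-1)=1>0$, so the conclusion fails in exactly that case.
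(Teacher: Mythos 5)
Your proof is correct, and it diverges from the paper's precisely at the one point where the hypothesis that $R$ is not a complete intersection must be used. The paper's proof invokes the multiplicity inequality of Lemma~\ref{lem:MultiIneq}: since the defining ideal of $R$ strictly contains a complete intersection of $c$ quadrics of the same height, $e(R)<2^{c}$, whence $2^{a}g(1)=h_{R}(1)=e(R)<2^{c}$ forces $a<c$; only your ``infinite'' case then occurs, and the sign of $g(-1)$ is read off from the leading coefficient of the polynomial that eventually gives the coefficients of $1-g(-z)/(1-z)^{c-a}$ (your limit as $z\to 1^{-}$ is an equivalent formulation of the same positivity constraint, and is legitimate because the only pole of that rational function is at $z=1$, so the series converges on $[0,1)$). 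You instead extract from the non-complete-intersection hypothesis the homological fact $\projdim_{Q}R\ge 2$ --- your exclusion of $\projdim_{Q}R=1$ is sound, since a nonzero free ideal is necessarily principal and generated by a non-zerodivisor, while $\dim Q=\dim R$ traps $\Ker(Q\to R)$ inside a minimal, hence associated, prime of the Cohen--Macaulay ring $Q$; even more directly, $\projdim_{Q}R=1$ would exhibit $R$ as $Q$ modulo a regular element, hence a complete intersection --- and you then dispose of the a priori possible cases $a>c$ and $a=c$ by evaluating $\psi$ at $z=1$ against the bound $\sum_{i}\beta_{i}\ge 2$. The trade-off: the paper's route is shorter because the multiplicity bound eliminates the finite-projective-dimension cases at the outset, whereas yours avoids Lemma~\ref{lem:MultiIneq} entirely and replaces it with a self-contained resolution-theoretic argument. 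Both are complete proofs.
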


\begin{proof}
We may assume $k$ is infinite. Let $c$ denote the codimension of $R$ and $e(R)$ its multiplicity. Since the defining ideal   of $R$ contains a complete intersection of $c$ quadrics, one has $e(R)\leq 2^c$. Indeed by virtue of \ref{lem:MultiIneq} we have $e(R)<2^c$ since $R$ is not a complete intersection. Hence
\[
2^{c}> e(R) = h_{R}(1) = 2^{a}g(1)
\]
 It follows that $g(1)\ge 1$ and that $c>a$. 

By Proposition~\ref{prop:HilbObstruction}, the coefficients of the powers series
\[
1-\frac{g(-z)}{(1-z)^{d}}\quad\text{where $d=c-a>0$,}
\]
are non-negative. On the other hand, they are eventually given by a polynomial of degree $d-1\geq 0$ whose leading coefficient is $-g(-1)/(d-1)!$. Thus, $g(-1)< 0$ holds, as claimed.
\end{proof} 

As an application of \ref{cor:HilbObstruction} we obtain: 
\begin{cor}
\label{cor:TensorProduct}
Let $R_1$ and $R_2$ be standard graded $k$-algebras and let $T=R_1\otimes_k R_2$. Assume that $R_1$ and $R_2$ are Koszul with the Backelin-Roos property. Then we have: 
\begin{itemize} 
\item[(1)] If $R_1$ is a complete intersection then $T$ is Koszul and has the Backelin-Roos property.
\item[(2)] If both $R_1$ and $R_2$ are not complete intersection then $T$ is Koszul and  does not have  the Backelin-Roos property.
\end{itemize} 
\end{cor}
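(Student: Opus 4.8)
The plan is to treat the two parts separately, relying in both on the fact that $T$ is Koszul. This holds because the minimal graded free resolution of $k$ over $T$ is the $k$-tensor product of the minimal (linear) resolutions of $k$ over $R_1$ and over $R_2$, and the tensor product of two linear complexes is again linear.

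For (1), I would exhibit a witnessing complete intersection directly. Since $R_2$ is Koszul with the Backelin-Roos property, Remark~\ref{rem:Golod2lin} furnishes a Golod map $Q_2\to R_2$ with $Q_2$ a complete intersection of quadrics; and since $R_1$ is Koszul and a complete intersection, its defining ideal is generated by a regular sequence of quadrics, so $R_1$ is itself a complete intersection of quadrics. Set $Q=R_1\otimes_k Q_2$. This is again a complete intersection of quadrics, because concatenating a regular sequence of quadrics in each tensor factor yields a regular sequence of quadrics in $R_1\otimes_k Q_2$. The candidate Golod map is the natural surjection $Q\to T$ given by the identity on $R_1$ tensored with $Q_2\to R_2$. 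The key computation is the resolution identity: if $G$ is the minimal $Q_2$-free resolution of $R_2$, then $R_1\otimes_k G$ is the minimal $Q$-free resolution of $T$. Exactness is preserved because $R_1\otimes_k(-)$ is exact over the field $k$; each $R_1\otimes_k G_i$ is a free $Q$-module; and minimality persists because the differentials, of the form $\id_{R_1}\otimes d_G$, still have entries in the homogeneous maximal ideal of $Q$. Consequently $t_i^Q(T)=t_i^{Q_2}(R_2)\le i+1$ for all $i$, so Remark~\ref{rem:Golod2lin} shows $Q\to T$ is Golod, and $T$ has the Backelin-Roos property.

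For (2), I would argue by contradiction using the Hilbert-series obstruction. First, $T$ is not a complete intersection: the tensor product $R_1\otimes_k R_2$ is a complete intersection if and only if both factors are, and by hypothesis neither $R_1$ nor $R_2$ is. Next I compute the $h$-polynomial. Since $H_T(z)=H_{R_1}(z)H_{R_2}(z)$ and $\dim T=\dim R_1+\dim R_2$, multiplicativity gives $h_T(z)=h_{R_1}(z)h_{R_2}(z)$. Writing $h_{R_i}(z)=g_i(z)(1+z)^{a_i}$ with $g_i(-1)\ne 0$ as in Corollary~\ref{cor:HilbObstruction}, one obtains $g_T(z)=g_1(z)g_2(z)$, so $g_T(-1)=g_1(-1)g_2(-1)$. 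Now Corollary~\ref{cor:HilbObstruction}, applied to each $R_i$ (Koszul, with the Backelin-Roos property, and not a complete intersection), yields $g_i(-1)<0$ for $i=1,2$; hence $g_T(-1)>0$. If $T$ had the Backelin-Roos property, then—being Koszul and not a complete intersection—Corollary~\ref{cor:HilbObstruction} would force $g_T(-1)<0$, a contradiction. Therefore $T$ does not have the Backelin-Roos property.

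The main obstacle is the resolution identity in part (1): verifying carefully that the external tensor product $R_1\otimes_k G$ is genuinely the \emph{minimal} $Q$-free resolution of $T$, so that the graded Betti numbers transfer verbatim and the bound $t_i^Q(T)\le i+1$ follows. Once this is secured, the Golod criterion of Remark~\ref{rem:Golod2lin} closes part (1), while part (2) reduces to the short multiplicativity computation feeding into the numerical obstruction of Corollary~\ref{cor:HilbObstruction}.
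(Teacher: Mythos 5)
Your proposal is correct and follows essentially the same route as the paper: part (2) is verbatim the paper's argument (multiplicativity of the $h$-polynomial plus Corollary~\ref{cor:HilbObstruction}), and part (1) produces the same witnessing Golod map $R_1\otimes_k Q_2\to T$ that the paper obtains by iterating Lemma~\ref{lem:bcr-regseq}, except that you verify it in one shot via the external tensor product of resolutions, which is a perfectly sound (and arguably cleaner) way to get $t_i^Q(T)=t_i^{Q_2}(R_2)\le i+1$.
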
 

\begin{proof} That $T$ is Koszul is a general fact, see \cite{BF}. If $R_1$ is a complete intersection then one deduces from \ref{lem:bcr-regseq} (1) that $T$ has the Backelin-Roos property. Finally, if $R_1$ and $R_2$ are both not complete intersections then, by virtue of \ref{cor:HilbObstruction},  the corresponding polynomials $g_1(z)$ and $g_2(z)$ satisfy $g_1(-1)<0$ and $g_2(-1)<0$. Now the polynomial $g(z)$ associated to $T$ is $g_1(z)g_2(z)$. 
Hence $g(-1)=g_1(-1)g_2(-1)>0$ and $T$ is not a complete intersection. It follows from \ref{cor:HilbObstruction} that $T$ does not have the Backelin-Roos property. 
\end{proof} 

The following well-known fact,  whose proof follows from the  additive formula for multiplicity \cite[Cor. 4.7.8]{BH},  has been used in the proof of \ref{cor:HilbObstruction}. 

\begin{lem}
\label{lem:MultiIneq}
Let $J, I$  be homogeneous ideals  in a polynomial ring $S$.  Assume that $J\subsetneq I$ and $\height J=\height I$.  Assume further that $J$ is pure i.e.~$\height  P=\height  J$ for every $P\in \Ass(S/J)$. Then $e(S/J)>e(S/I)$. 
\end{lem}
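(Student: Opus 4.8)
The plan is to prove Lemma~\ref{lem:MultiIneq} by reducing to the additive formula for multiplicity. The key observation is that since $J$ is pure of height $h:=\height J=\height I$, every associated prime $P$ of $S/J$ has $\height P=h$, so every such $P$ is a minimal prime of $J$ and contributes to the multiplicity. I would invoke the additive formula \cite[Cor.~4.7.8]{BH}, which expresses the multiplicity as a sum over the minimal primes of dimension equal to $\dim(S/J)$, weighted by the length of the localization:
\[
e(S/J)=\sum_{P}\ell_{S_P}\bigl((S/J)_P\bigr)\,e(S/P),
\]
where $P$ ranges over the minimal primes of $J$ of height $h$.

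First I would set up the comparison between the minimal primes of $J$ and those of $I$. Since $J\subseteq I$, every prime containing $I$ contains $J$, so the height-$h$ minimal primes of $I$ form a subset of those of $J$ (using $\height I=h$ as well). For each such common prime $P$, localizing the surjection $S/J\twoheadrightarrow S/I$ at $P$ gives a surjection $(S/J)_P\twoheadrightarrow (S/I)_P$ of finite-length $S_P$-modules, whence $\ell_{S_P}((S/J)_P)\ge \ell_{S_P}((S/I)_P)$. This yields the coefficientwise inequality $e(S/J)\ge e(S/I)$ term by term.

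The remaining task is to show the inequality is \emph{strict}, which is where the hypotheses $J\subsetneq I$ and the purity of $J$ genuinely enter. Here I would argue that strict containment forces a strict drop in length at some minimal prime, or the disappearance of a minimal prime altogether. Since $J$ is pure, every associated prime of $S/J$ has height exactly $h$; thus the containment $J\subsetneq I$ cannot be absorbed entirely into components of higher codimension. Concretely, if $J$ and $I$ had the same minimal primes of height $h$ with equal local lengths, then $(S/J)_P=(S/I)_P$ for every minimal prime $P$, and purity of $J$ (no embedded or higher-height components) would force $J=\bigcap_P (J_P\cap S)=\bigcap_P(I_P\cap S)\supseteq I$, contradicting $J\subsetneq I$. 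Hence at least one term strictly decreases, giving $e(S/J)>e(S/I)$.

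The main obstacle I anticipate is making the last step fully rigorous: passing from ``equality of all localizations at height-$h$ minimal primes'' back to ``equality of the ideals'' requires that the purity of $J$ guarantees $J$ is the intersection of its height-$h$ primary components with no embedded primes contributing, so that $J$ is recovered from its localizations at minimal primes. This is exactly the primary-decomposition characterization of an unmixed (pure-height) ideal, and it is the place where one must be careful that $I$ could a priori have higher-codimension components that do not affect the height-$h$ multiplicity computation; those are harmless because they contribute nothing to $e(S/I)$, but one must confirm they cannot rescue the equality case.
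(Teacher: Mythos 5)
Your proof is correct and follows exactly the route the paper indicates: the paper gives no details beyond citing the additive formula for multiplicity \cite[Cor.~4.7.8]{BH}, and your argument is precisely the elaboration of that, with the purity hypothesis correctly used to recover $J$ from its localizations at the (necessarily minimal, height-$h$) associated primes in the equality case.
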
 

The following example illustrates that the absolutely Koszul property does not descend along Golod homomorphisms. Thus, with regards to Proposition~\ref{prop:br=ak}, it is critical that the ring $Q$ in the definition of the Backelin-Roos property is a complete intersection.
 
\begin{ex}
Let $k$ be a field and, set $Q=k[x,y,z,t]/(x^2,xy,z^2)$ and let $R=Q/(zt)$. 

The ring $Q$ has the Backelin-Roos property, and hence is absolutely Koszul.  One way to verify this is to note that the ring $k[x,y]/(x^2,xy)$
is Golod (use Remark~\ref{rem:Golod2lin}), and hence has the Backelin-Roos property, and the latter is inherited by $k[x,y,z]/(x^2,xy,z^2)$; see Lemma~\ref{lem:bcr-regseq}.

The $Q$-free resolution of $R$ is given by
\[
\ldots \xrightarrow{\ z\ } Q(-4) \xrightarrow{\ z\ } Q(-3) \xrightarrow{\ z\ } Q(-2)\xrightarrow{\ zt\ } Q \to 0,
\]
so the map $Q\to R$ is Golod, by Remark \ref{rem:Golod2lin}.

Finally, it remains to recall that $R$ is not absolutely Koszul, as explained in Example~\ref{bad-Koszul}; note that $R$ is isomorphic to $k[x,y]/(x^{2},xy)\otimes_{k}k[x,y]/(x^{2},xy)$. 
\end{ex}

\section{Algebras defined by monomial relations}
\label{sect_monomial}
In this section we identify a class of Koszul algebras defined by monomial relations that have the Backelin-Roos property.  This is the content of the result below. It may be viewed as a first step towards addressing \cite[Question 4.15]{CDR} that asks for the classification of absolutely Koszul algebras defined by monomials relations.

\begin{thm}
\label{thm:ciplus2linear}
Let $L,I$ be quadratic monomial ideals in the polynomial ring $S=k[x_{1},\dots,x_{n}]$. Assume that $L$ has  a $2$-linear resolution, and $I$ is  generated by a regular sequence.  Then the ideal $(I+L)/I$ in $S/I$ also has a $2$-linear resolution.  In particular   $S/(I+L)$  has the Backelin-Roos property and hence it is absolutely Koszul.
\end{thm}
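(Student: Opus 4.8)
The plan is to prove the main structural claim — that $(I+L)/I$ has a $2$-linear resolution over $S/I$ — and then derive the Backelin--Roos and absolutely Koszul conclusions from the earlier machinery. The key point is a change-of-rings argument passing from the polynomial ring $S$ to the complete intersection $S/I$. Since $L$ has a $2$-linear resolution over $S$ by hypothesis, and $I$ is generated by a regular sequence of quadrics, the strategy is to track the degrees $t^{S/I}_i\big((I+L)/I\big)$ and show they are bounded by $i+2$.

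\medskip

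\emph{Step 1: Reduce to a statement about $\Tor$ degrees.} First I would note that $(I+L)/I \cong L/(L\cap I)$ as $S/I$-modules, but it is cleaner to work with the graded $S$-module $L$ and pass to $S/I$ by base change. The goal becomes showing $t^{S/I}_i\big((I+L)/I\big)\le i+2$ for all $i$. Since $S/(I+L)$ is the quotient of the Koszul algebra $S/I$ (a complete intersection of quadrics) by the ideal $(I+L)/I$, establishing the $2$-linearity of this ideal's resolution will, via Remark~\ref{rem:Golod2lin}, identify $S/I\to S/(I+L)$ as a Golod morphism, giving the Backelin--Roos property directly.

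\medskip

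\emph{Step 2: The change-of-rings computation.} The heart of the argument is a spectral-sequence or iterated-mapping-cone comparison relating resolutions over $S$ and over $S/I$. Let $K$ be the Koszul complex resolving $S/I$ over $S$. The plan is to compute $\Tor^{S/I}_\bullet(k, (I+L)/I)$ by exploiting that $I$ is a regular sequence, so that $\Tor^S_i(S/I, M)$ vanishes for $i>0$ whenever the regular sequence stays regular on $M$, or is otherwise controlled by Koszul homology in low degrees. Concretely, I expect to invoke Remark~\ref{rem:tis} applied to the complex $F\otimes_S (S/I)$, where $F$ is the minimal $S$-free $2$-linear resolution of $L$: because $F_i$ is generated in degree $i+2$ (by $2$-linearity) and the homology modules $H_j$ of $F\otimes_S S/I$ are concentrated in the right degrees (governed by the regularity of the quadratic regular sequence $I$ on the monomial module $L$), the formula $t^{S/I}_i \le \max\{a_i,b_i\}$ should yield the bound $i+2$ by induction on $i$, exactly as in the proof of Lemma~\ref{lem:br-dim}.

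\medskip

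\emph{Step 3: Conclude.} Once $2$-linearity of $(I+L)/I$ over $S/I$ is established, Remark~\ref{rem:Golod2lin} shows $S/I \to S/(I+L)$ is Golod with $S/I$ a complete intersection of quadrics; this is precisely the Backelin--Roos property, and Proposition~\ref{prop:br=ak} then gives absolutely Koszul (noting the monomial hypotheses are field-independent, so no field extension is needed).

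\medskip

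\emph{Main obstacle.} I expect the crux to be controlling the homology of $F\otimes_S (S/I)$ — that is, verifying that the quadratic regular sequence generating $I$ interacts with the monomial module $L$ so that the higher $\Tor^S_j(S/I, L)$, though possibly nonzero, are concentrated in internal degrees low enough (at most $j+\text{const}$) that the inductive degree bound in Step~2 closes. Since both $I$ and $L$ are monomial, one may hope to choose the regular sequence of quadrics in $I$ compatibly with the monomial structure, perhaps after a change of variables, and to analyze the Koszul homology combinatorially; making this degree bookkeeping precise, rather than the formal change-of-rings mechanics, is where the real work lies.
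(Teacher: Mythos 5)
Your plan correctly identifies the target (showing $\reg_{S/I}\bigl((I+L)/I\bigr)=2$ and then invoking Remark~\ref{rem:Golod2lin} to get a Golod map from the complete intersection $S/I$), and Step~3 is fine. But Step~2 is not a proof: the entire difficulty of the theorem is concentrated in the point you yourself flag as the ``main obstacle,'' namely controlling the homology of $F\otimes_S(S/I)$, and you do not resolve it. To make Remark~\ref{rem:tis} close the induction you would need, for every $l\ge 1$ and $j\ge 0$, a bound of the form $t_j^{S/I}\bigl(\Tor_l^S(S/L,S/I)\bigr)\le j+l+2$. This requires two things, neither of which is established or evidently true: (a) that $\Tor_l^S(S/L,S/I)$ is generated in internal degree at most $l+2$ --- it is a subquotient of a free module generated in degree $l+1$, but subquotients can acquire minimal generators in arbitrarily high degree, and the general regularity bounds for $\Tor$ only give something like $l+1+\reg(S/I)=l+1+p$; and (b) that these Tor modules then have linear resolutions over $S/I$, which is a statement of the same nature as the theorem itself. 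The analogy with the proof of Lemma~\ref{lem:br-dim} is misleading: there the higher homology is a single shifted copy of $R$, whereas here the Koszul homology $H_l(\ub;S/L)$ of the quadratic regular sequence on $S/L$ has no such a priori description. (There is also a smaller slip: $L\otimes_S S/I=L/IL$ is not $(I+L)/I=L/(I\cap L)$; one should resolve $S/L$ rather than $L$, but that is easily repaired.)

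The paper sidesteps all of this with a different and more elementary mechanism. By the theorem of Herzog--Hibi--Zheng \cite{HHZ}, a quadratic monomial ideal with $2$-linear resolution has \emph{linear quotients}: the generators $m_1,\dots,m_d$ of $L$ can be ordered so that each colon ideal $(m_1,\dots,m_{i-1}):_S m_i$ is generated by variables. Passing to $Q=S/I$, the colon ideals $J_{i-1}:_Q\overline{m_i}$ are again generated by (classes of) variables or equal to $Q$, because colons of monomial ideals are computed by an explicit formula. One then filters $(I+L)/I$ by the $J_i$ and uses the short exact sequences $0\to J_{i-1}\to J_i\to (Q/U)(-2)\to 0$ together with the fact that $Q$, being defined by quadratic monomials, is strongly Koszul --- so $Q/U$ has a linear $Q$-resolution --- to conclude $\reg_Q J_i=2$ by induction. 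If you want to salvage your approach, the combinatorial input you are missing is essentially this linear-quotients structure; without it, the degree bookkeeping in your Step~2 does not close.
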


\begin{proof}
By a result of Herzog, Hibi and Zheng \cite[Theorem 3.2]{HHZ}, the ideal $L$ has linear quotients; namely,  there exists a labelling  $m_1,\ldots,m_d$ of the minimal monomial generators of $L$ such that the ideal $(m_1,\ldots,m_{i-1}):_S m_i$ is generated by variables, for each integer $i$ with $1\le i\le d$.

Set $Q=S/I$, and writing $\overline{s}$ to denote the residue class in $Q$ of an element $s\in S$, set 
\[
J_i=(\overline{m_1},\ldots,\overline{m_i})\,, \quad\text{for $1\le i\le d$.}
\]
We claim that the colon ideal $J_{i-1}:_Q \overline{m_i}$ is either the unit ideal or an ideal of $Q$ generated by residue classes of variables.  This follows by applying to the present situation the following general rule to compute colon of monomial ideals. For monomials $a_1,\dots,a_t,a$ one has 
$$(a_1,\dots,a_t):_S a=( a_i/\gcd(a_i,a) : i=1,\dots, t).$$

 We claim next that $\reg_Q J_i=2$ for all $0\le i\le d$.

Indeed, note that by the computation above, one has exact sequences of graded $Q$-modules
\[
0\to J_{i-1} \to J_i\to \frac{Q}{U}(-2) \to 0\,
\]
where the ideal $U=J_{i-1}:_Q \overline{m_i}$ is either $Q$ or   generated by residue classes of variables. 
Since $Q$ is defined by quadratic monomials, it is strongly Koszul in the sense of \cite{HHR}, so  every ideal generated by residue classes of some variables has a linear resolution; see \cite[Sect.3.3]{CDR}. Given this and the exact sequences above, an induction on $i$ yields the desired claim.

For $i=d$ one thus obtains $\reg_Q (I+L)/I=2$, as desired. It remains to note that the map $Q \to S/(I+L)$ is Golod, by Remark~\ref{rem:Golod2lin}.
\end{proof}

The example below is intended to show that in the result above, both the complete intersection $I$ and the $2$-linear ideal $J$ need to be generated by monomials. 

\begin{ex}
Let $I$ be the complete intersection of quadrics $(x_4^2-x_1x_2,x_5^2-x_2x_3)$ and $J$ the monomial ideal $(x_4x_6,x_5x_6)$, in the polynomial ring $S=\Q[x_{1},\dots,x_{6}]$.  It is easy to verify that the minimal graded $S$-free resolution of $J$ is $2$-linear. It can also be checked (by direct computation, or using some computer algebra system) that $S/(I+L)$ is not even Koszul: the minimal graded free resolution of $k$ over it has a generator in $(3,4)$. 

Similarly, the ring defined by $(xy-zt) + (x^2,y^2,z^2,t^2)$ in the ring $\Q[x,y,z,t]$ is not Koszul.
\end{ex}

\subsection*{Universally Koszul algebras}
A ring $R$ is said to be {\em universally Koszul} if every ideal generated by linear forms has a $1$-linear resolution; equivalently, the collection of ideals generated by linear forms is a Koszul filtration for $R$; see \cite{CTV}. We recall Conca's classification~\cite[Theorems 5, 6]{Con2} of universally Koszul algebras defined by monomials relations.

\begin{rem}
\label{rem:ukmonomial}
For each $m\ge 0$, let $H(m)=k[x_1,\ldots,x_m]/(x_1,\ldots,x_{m-1})^2+x_m^2$.

When the $\chara k\neq  2$, any  universally Koszul $k$-algebra defined by monomial relations is obtained from the $H(m)$'s by repeatedly performing the following operations:
\begin{itemize}
\item polynomial extension,
\item fibre product over $k$.
\end{itemize}
When $\chara k = 2$ one may need, in addition, the following operation:
\begin{itemize}
\item extending $A$ to $A[x]/(x^2)$ where $x$ is an indeterminate, of degree one.
\end{itemize}
\end{rem}

The result below settles \cite[Question 4.13(3)]{CDR} in the affirmative, for algebras defined by monomial relations.

\begin{thm}
\label{thm:uKoszul}
Universally Koszul algebras defined by monomials have the Backelin-Roos property, and hence are absolutely Koszul.
\end{thm}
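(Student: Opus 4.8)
The plan is to induct on the classification of universally Koszul monomial algebras recorded in Remark~\ref{rem:ukmonomial}. According to that result every such algebra is built from the basic blocks $H(m)=k[x_1,\dots,x_m]/((x_1,\dots,x_{m-1})^2+x_m^2)$ by repeatedly applying polynomial extension, fibre product over $k$, and---when $\chara k=2$---the operation $A\mapsto A[x]/(x^2)$. Since universally Koszul algebras are Koszul (the maximal ideal, being generated by linear forms, has a linear resolution, whence so does $k$), it suffices to check that each $H(m)$ has the Backelin-Roos property and that each of the three operations preserves it within the class of Koszul algebras; the theorem then follows by induction on the number of operations used to build the algebra. The final assertion, that such an algebra is absolutely Koszul, is immediate from the implication ``Backelin-Roos $\Rightarrow$ absolutely Koszul'' for Koszul algebras recorded in Section~\ref{sect_intro}.

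For the base case, fix $m$ and set $S=k[x_1,\dots,x_m]$, $I=(x_m^2)$, and $L=(x_1,\dots,x_{m-1})^2$, so that $H(m)=S/(I+L)$. Both $I$ and $L$ are quadratic monomial ideals: $I$ is generated by the single quadric $x_m^2$, hence is a complete intersection; and $L$, being the square of the homogeneous maximal ideal of $k[x_1,\dots,x_{m-1}]$, has a $2$-linear resolution, since powers of the maximal ideal have linear resolutions. Theorem~\ref{thm:ciplus2linear} therefore applies and shows that $H(m)$ has the Backelin-Roos property. For $m\le 2$ one may alternatively observe that $H(m)$ is itself a complete intersection, so the property is clear.

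For the inductive step, suppose the algebras entering an operation are Koszul and already have the Backelin-Roos property. A polynomial extension $R\mapsto R[x]$ preserves both properties by Lemma~\ref{lem:bcr-regseq}(2), and the same lemma handles the extension $A\mapsto A[x]/(x^2)$ needed in characteristic $2$. A fibre product $S\times_k T$ of two Koszul algebras with the Backelin-Roos property again has the Backelin-Roos property by Theorem~\ref{thm:br-fibre-product}. In each case the output is again Koszul---indeed universally Koszul, by Remark~\ref{rem:ukmonomial}---so the inductive hypothesis is maintained and the induction closes.

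Since the transfer results already supply all three operations as well as the reduction to the base blocks, I do not expect a serious obstacle; the whole statement is essentially a corollary of the machinery assembled in Sections~\ref{BRprop} and \ref{sect_monomial}. The single point requiring a small observation is the base case, namely the recognition that the defining ideal of $H(m)$ splits as the complete intersection $(x_m^2)$ together with the $2$-linearly resolved ideal $(x_1,\dots,x_{m-1})^2$, which is exactly the shape required by Theorem~\ref{thm:ciplus2linear}.
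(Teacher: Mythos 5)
Your proof is correct and follows essentially the same route as the paper: induct over the classification in Remark~\ref{rem:ukmonomial}, handling the operations of polynomial extension, fibre product, and $A\mapsto A[x]/(x^2)$ via Theorem~\ref{thm:br-fibre-product} and Lemma~\ref{lem:bcr-regseq}. The only (harmless) divergence is the base case: the paper observes that $k[x_1,\dots,x_{m-1}]/(x_1,\dots,x_{m-1})^2$ is Golod and then applies Lemma~\ref{lem:bcr-regseq}(2) to reach $H(m)$, whereas you apply Theorem~\ref{thm:ciplus2linear} to the decomposition $(x_m^2)+(x_1,\dots,x_{m-1})^2$; both verifications are valid.
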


\begin{proof}
Observe that the ring $H(m)$  has the Backelin-Roos property: The ring 
\[
k[x_{1},\dots,x_{m-1}]/(x_{1},\dots,x_{m-1})^{2}
\]
is Golod, as is easily verified, and hence has the Backelin-Roos property, so it remains to apply Lemma~\ref{lem:bcr-regseq}. In view of Remark~\ref{rem:ukmonomial}   the desired statement is thus an immediate consequence of Theorem~\ref{thm:br-fibre-product}  and Lemma \ref{lem:bcr-regseq}.
\end{proof}

Theorem \ref{thm:uKoszul} motivates the following

\begin{quest}
Does every universally Koszul algebra have the Backelin-Roos property, at least after extending the field? 
\end{quest}
 
\section{Veronese and Segre algebras}
\label{VeroneseSegre}
In this section we turn to Veronese subrings of polynomial rings. For certain values of the parameters involved we verify that they  have the Backelin-Roos property. On the other hand  for other values we verify that they do not have the Backelin-Roos property leaving open the question of whether they are absolutely Koszul. We prove first  the following general results. 

\begin{prop}
\label{prop:BRpoints}
Let $R$ be the homogeneous coordinate ring of $e$ points in general linear position in $\mathbb{P}_k^n$, where $n\ge 1$. When $e\le 2n$, the ring $R$ is Koszul and has the Backelin-Roos property.
\end{prop}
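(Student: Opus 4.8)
The plan is to reduce to an Artinian short algebra and then invoke the ascent result Lemma~\ref{lem:br-ascent}. Write $R=S/I_X$, where $S=k[x_0,\dots,x_n]$ and $I_X$ is the ideal of the given points $X$; this is a one-dimensional Cohen--Macaulay ring of codimension $n$ and multiplicity $e$. After enlarging $k$ if necessary, which is harmless by Lemma~\ref{lem:field-extension}, a general linear form $\ell\in R_1$ is a nonzerodivisor, so $A:=R/\ell R$ is Artinian. First I would record that $R$ is Koszul: this is exactly the content of the results of Conca, Rossi and Valla~\cite{CRV} for points in general linear position with $e\le 2n$. Koszulness then passes to $A$ because $\ell$ is a regular linear form, and Lemma~\ref{lem:br-ascent}, applied to the single element $\ell$, reduces the whole statement to showing that the Artinian ring $A$ has the Backelin--Roos property.

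Next I would pin $A$ down numerically. Since $e$ points in linearly general position impose independent conditions on quadrics once $e\le 2n+1$, the Hilbert function of $R$ is $1,\,n+1,\,e,\,e,\dots$, so that of $A$ is $(1,n,s)$ with $s=e-n-1$; in particular $\mathfrak m_A^3=0$, and the hypothesis $e\le 2n$ is precisely the inequality $s\le n-1$. When $e\le n+1$ one has $s\le 0$, hence $\mathfrak m_A^2=0$, so $A$ is Golod and the Backelin--Roos property is immediate (take the ambient polynomial ring as the complete intersection); thus I may assume $1\le s\le n-1$. Because $A$ is Koszul it is defined by quadrics, and the ideal $\mathfrak a$ they generate is $\mathfrak m$-primary, as $A$ is Artinian. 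Over the now infinite field, $n$ general quadrics $q_1,\dots,q_n\in\mathfrak a$ therefore form a regular sequence; set $Q=P/(q_1,\dots,q_n)$, where $P$ is the polynomial ring on $A_1$. Then $Q$ is an Artinian complete intersection of quadrics---in particular Koszul, and Gorenstein with socle in degree $n$---and there is a surjection $Q\twoheadrightarrow A$.

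It remains to prove that $Q\to A$ is Golod, which by Remark~\ref{rem:Golod2lin} is the single requirement $t^Q_i(A)\le i+1$ for all $i$, that is, that $K:=\Ker(Q\to A)$ has a $2$-linear $Q$-free resolution. That $K$ is generated in degree $2$ is straightforward: quadraticity of $\mathfrak a$ together with $A_{\ge 3}=0$ gives $\mathfrak m_Q K_2=K_{\ge 3}$. The main obstacle is controlling the higher syzygies of $K$, and here the natural tool is Gorenstein duality in the Artinian complete intersection $Q$. The linked ideal $K'=(0:_Q K)$ satisfies $K\cong\omega_{Q/K'}$ up to a degree shift, so $2$-linearity of the resolution of $K$ is equivalent to a linearity statement for the resolution of the linked algebra $Q/K'$. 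Since $K\supseteq\mathfrak m_Q^3$, the ideal $K'$ is forced into high degrees, and the inequality $s\le n-1$---which is exactly the Hilbert-series obstruction $h_A(-1)=e-2n\le 0$ of Corollary~\ref{cor:HilbObstruction}---should make $Q/K'$ small enough, with socle concentrated in low enough degree, to carry the required linear resolution; the genericity of the $q_i$, inherited from the general position of $X$, supplies the behaviour one needs. Verifying this linearity is the technical heart of the argument and the place where the full strength of $e\le 2n$ is used; the structure theory of resolutions over short rings (those with $\mathfrak m^3=0$) is what I would use to complete it.
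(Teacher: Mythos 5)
Your reduction to an Artinian quotient $A=R/\ell R$ via Lemma~\ref{lem:br-ascent}, and the computation of its Hilbert function $(1,n,s)$ with $s=e-n-1\le n-1$, match the paper's set-up and are correct. The problem is that the heart of the argument is missing: you never actually prove that your map $Q\to A$ is Golod, i.e.\ that $K=\Ker(Q\to A)$ has a $2$-linear resolution over the Artinian complete intersection $Q$ cut out by $n$ general quadrics of the defining ideal. You reduce this to ``a linearity statement for the linked algebra $Q/K'$'' and then assert that the inequality $s\le n-1$ ``should'' force it, with genericity ``supplying the behaviour one needs.'' That is a plan, not a proof. Note in particular that $s\le n-1$ is exactly the necessary Hilbert-series obstruction of Corollary~\ref{cor:HilbObstruction}; it cannot by itself be sufficient, and there is no argument here that the specific geometric hypothesis (general linear position) translates into the required vanishing of higher-degree syzygies of $K$ over $Q$. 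The choice of $Q$ as $n$ general quadrics is also not obviously the right one: the example after Lemma~\ref{lem:br-dim} shows that even for rings with the Backelin--Roos property, not every maximal regular sequence of quadrics in the defining ideal induces a Golod map, so some care is needed in selecting the complete intersection.

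The paper closes this gap by a different, concrete mechanism. It extracts from the proof of \cite[Theorem 3.1]{CRV} that a suitable Artinian reduction $A$ admits a \emph{Conca generator}: an element $a\in A_1$ with $a^2=0$ and $\mm^2=a\mm$. The result \cite[Theorem 1.4]{AIS} --- which is precisely the ``structure theory of resolutions over short rings'' you gesture at --- then says that such an $A$ has the Backelin--Roos property, and Lemma~\ref{lem:br-ascent} lifts this to $R$. So the genericity of the points is used structurally (to produce the Conca generator), not merely numerically, and the complete intersection presenting $A$ is the one furnished by the AIS theorem rather than one you choose by hand. (Two smaller points: Koszulness of $R$ is due to Kempf \cite{K}, with \cite{CRV} proving the stronger G-quadratic statement; and your observation that $e\le n+1$ gives $\mm_A^2=0$, hence the Golod case, is fine but is subsumed by the Conca-generator argument.)
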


\begin{proof}
Kempf proved in \cite{K} that  $R$ is Koszul. In  \cite[Theorem 3.1]{CRV} it is shown that $R$ is even defined by a Gr\"obner basis of quadrics.  Indeed  the proof of \cite[Theorem 3.1]{CRV} shows  there exists an artinian reduction, say $A$, of $R$ for which there exists an element $a\in A_{1}$ with $a^{2}=0$ and $\mm^{2}=a\mm$, where $\mm=A_{\geqslant 1}$, the maximal ideal of $A$. In \cite{AIS}, such an element is called a Conca generator of $\mm$, and it follows from \cite[Theorem 1.4]{AIS} that $A$ has the Backelin-Roos property, and hence does $R$; see Lemma~\ref{lem:br-ascent}.
\end{proof}

Recall that an algebra $R$ over a field $k$ is \emph{geometrically integral} if the ring $R\otimes_k \overline{k}$  is a domain, where $\overline{k}$ is the algebraic closure of $k$.
 
\begin{thm}
\label{thm:geoint}
Let $k$ be a field of characteristic zero and $R$ a standard graded algebra that is geometrically integral. The ring $R$ is absolutely Koszul in the following cases:
\begin{enumerate}[\quad\rm(1)]
\item  $R$ is Cohen-Macaulay with $e(R)\le 2\codim R$, or
\item $R$ is a Gorenstein isolated singularity with $e(R)=2\codim R+2$ and defined by quadratic relations.
\end{enumerate}
If moreover, $k$ is algebraically closed, then $R$ also has the Backelin-Roos property.
\end{thm}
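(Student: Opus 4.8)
The plan is to reduce, in both cases, to a statement about a set of points or an Artinian algebra where the earlier results apply, and then to climb back up using Lemma~\ref{lem:br-ascent}. First I would dispose of the passage between $k$ and its algebraic closure. By Lemma~\ref{lem:field-extension} and Proposition~\ref{prop:br=ak} it suffices to prove that $\overline k\otimes_k R$ has the Backelin-Roos property: once this is known, the case of algebraically closed $k$ is immediate, while for general $k$ of characteristic zero the algebra $R$ is Koszul (the Koszul property is insensitive to field extensions, by Lemma~\ref{lem:field-extension}(1) applied to $k$), so Proposition~\ref{prop:br=ak} yields that $R$ is absolutely Koszul. Since $R$ is geometrically integral, $\overline k\otimes_k R$ is a domain, and it retains all the numerical hypotheses because $e$, $\dim$, $\codim$ and the Cohen-Macaulay and Gorenstein properties are stable under the flat base change $k\to\overline k$. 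Thus I may assume $k=\overline k$ and $R$ is a domain, so that $X=\Proj R$ is an integral projective variety, smooth in case~(2) since an isolated singularity means $R_{\mathfrak p}$ is regular for every non-irrelevant homogeneous prime $\mathfrak p$.

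The heart of the argument is the reduction by general linear sections. Set $d=\dim R$ and $c=\codim R$, and choose general linear forms $\ell_1,\dots,\ell_d$; since $R$ is Cohen-Macaulay they form a regular sequence. For case~(1) I would stop one step early: the ring $\bar R=R/(\ell_1,\dots,\ell_{d-1})$ is the homogeneous coordinate ring of the general linear section $\Gamma=X\cap L$, a reduced set of $e(R)$ points in $\mathbb P^{c}$, the reducedness and the identification of $\bar R$ with the coordinate ring of $\Gamma$ relying on $R$ being Cohen-Macaulay and $\chara k=0$. Cutting $X$ down to an integral curve by general hyperplanes, repeatedly invoking Bertini in characteristic zero to preserve irreducibility, and then applying Harris' general position theorem \cite[p.~109]{ACGH} to that curve, I obtain that the points of $\Gamma$ lie in general linear position. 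As $e(R)\le 2c$, Proposition~\ref{prop:BRpoints} shows that $\bar R$ is Koszul with the Backelin-Roos property, and Lemma~\ref{lem:br-ascent}, applied to $\ell_1,\dots,\ell_{d-1}$, transfers both properties to $R$.

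For case~(2) I would instead pass to the full Artinian reduction $A=R/(\ell_1,\dots,\ell_d)$, which is Gorenstein with $\embdim A=c$ and $\dim_k A=e(R)=2c+2$. Graded Gorenstein duality forces a symmetric Hilbert function beginning $1,c,\dots$, and comparison with $2c+2=1+c+c+1$ pins it down to $(1,c,c,1)$; hence $A$ has socle degree $3$, is a compressed Gorenstein algebra, and is still defined by quadrics. Here the smoothness of $X$ is used to guarantee that the general section $A$ is sufficiently general, so that by the results of Conca, Rossi and Valla \cite{CRV} the algebra $A$ is Koszul, and by the theorem of Henriques and \c{S}ega on Gorenstein Artinian algebras with socle in degree $3$ it has the Backelin-Roos property. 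Lemma~\ref{lem:br-ascent}, now applied to $\ell_1,\dots,\ell_d$, lifts these conclusions to $R$, completing both cases and, through the first paragraph, the statement on absolute Koszulness.

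The main obstacle I anticipate lies in the two geometric passages underlying the reduction. First, one must ensure that the general linear section ring $R/(\ell_1,\dots,\ell_{d-1})$ really is the reduced coordinate ring of $\Gamma$ and that these points lie in general linear position; this is exactly where characteristic zero enters, through Bertini's irreducibility theorem and Harris' general position theorem, neither of which survives unchanged in positive characteristic. Second, and more delicate in case~(2), is the claim that the isolated singularity hypothesis forces the Artinian reduction to be a \emph{generic} compressed Gorenstein algebra, generic enough for the Conca-Rossi-Valla and Henriques-\c{S}ega results to apply. Verifying that smoothness of $X$ propagates to genericity of the dual cubic of $A$, rather than merely to the compressed Hilbert function $(1,c,c,1)$, is the step I expect to require the most care.
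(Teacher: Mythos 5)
Your reduction to an algebraically closed field and your treatment of case~(1) -- cutting down by general linear forms with Bertini, invoking Harris' general position theorem, and then applying Proposition~\ref{prop:BRpoints} and Lemma~\ref{lem:br-ascent} -- coincide with the paper's argument and are fine. The problem is case~(2), where your plan has a genuine gap that you have correctly located but not filled. Passing to the full Artinian reduction $A$ and computing the Hilbert function $(1,c,c,1)$ gives far too little: a quadratic Artinian Gorenstein algebra with this Hilbert function need not be Koszul, let alone have the Backelin-Roos property, so no theorem of the form ``socle degree $3$ plus quadratic relations implies Backelin-Roos'' is available. The result of Henriques and \c{S}ega that the paper uses, \cite[Proposition 3.9]{HS}, requires a specific structural input: an exact zero divisor, i.e.\ a linear form $a$ with $a^{2}=0$ and $(0\colon a)=(a)$, such that $A/(a)$ has a Conca generator. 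Your proposal offers no mechanism for producing such an $a$, and ``genericity of the dual cubic'' is not something that the isolated singularity hypothesis hands you; even granting genericity, you would still need to prove that a generic compressed Gorenstein algebra with socle degree $3$ admits this structure, which is precisely the content you are missing.

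The paper's route through this point is quite different and is the essential idea of case~(2). Normality of the two-dimensional Gorenstein isolated singularity identifies $R$ (via \cite{Eis}) as the homogeneous coordinate ring of a canonical curve; the quadratic hypothesis and the discussion preceding \cite[Theorem 5.3]{CRV}, together with Theorem~\ref{thm:glind-flatdim}, then allow one to replace $R$ by the coordinate ring of a very particular configuration of $2n+2$ points in $\PC^{n}$: two groups of $n+1$ points lying on two hyperplanes $H_{1},H_{2}$, with $H_{i}$ missing the points of the other group. It is exactly this configuration that \cite[Theorem 5.3]{CRV} exploits to produce, in an Artinian reduction $A=R/lR$, a linear form $a$ with $a^{2}=0$ and $\rk(A_{1}\xrightarrow{\ a\ }A_{2})=n-1$, whence $(0\colon a)=(a)$ by \cite[Proposition 2.13(a)]{CRV}; Gorenstein duality then gives $A/(a)$ the Hilbert series $1+(n-1)z+z^{2}$, and \cite[Lemma 3]{Con0} supplies the Conca generator needed for \cite[Proposition 3.9]{HS}. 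To repair your proof you would need to replace the appeal to genericity by this (or an equivalent) construction of an exact zero divisor.
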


\begin{proof}
In view of Proposition~\ref{prop:br=ak}, it suffices to consider the case when  $k$ is algebraically closed, and verify that $R$ is Koszul and has the Backelin-Roos property. In particular, we may assume $R$ is a Cohen-Macaulay domain.  Also $\dim R\ge 2$, else by Hilbert's Nullstellensatz $R$ is the homogeneous coordinate ring of a point or $R\cong k$, so $\codim R=0$, contradicting our hypotheses. Furthermore, going modulo $(\dim R-2)$ general linear forms and using that by  \ref{lem:br-ascent}  the Koszulness and the Backelin-Roos property  ascend from the quotient to the original ring, we may further assume $\dim R = 2$. Observe that this process preserves the hypotheses of the desired result; for the domain property, this uses Bertini's theorem.

\bigskip

Case (1).  Let $x$ be a general linear form in $R$. By Harris' general position theorem \cite[page 109]{ACGH},  the ring $R/(x)$ is the homogeneous coordinate ring of $e(R)$ points in general linear position. Since $e(R)\le 2\codim R$,  Proposition \ref{prop:BRpoints} yields that $R/(x)$, and hence also $R$, is Koszul and has the Backelin-Roos property.

\bigskip

Case (2). Since $R$ has dimension two and is an Gorenstein isolated singularity, it is normal. By the discussion at the end of page 7 in \cite{Eis}, it then follows that  $R$ is the homogeneous coordinate ring of a canonical curve. As $R$ is quadratic, by the discussion before Theorem 5.3 in \cite{CRV} and Theorem \ref{thm:glind-flatdim}, we can further replace $R$ by the homogeneous coordinate ring of a set $X$ of $2n+2$ distinct points in $\PC^n$ with the following properties:
\begin{enumerate}
\item $X=Y_1\cup Y_2$ where $Y_1$, $Y_2$ are sets of $n+1$ distinct points such that the points in $Y_i$ belong to a hyperplane $H_i$ in $\PC^n$ for $i=1,2$, and,
\item $H_1$ contains no point of $Y_2$ and $H_2$ contains no point of $Y_1$.
\end{enumerate}
In particular, $X$ and $R$ satisfy the condition of \cite[Theorem 5.3]{CRV}. From the last paragraph in the proof of \emph{ibid.}, there exists an $R$-regular linear form $l$ such that in the ring $A=R/lR$ there exists a non-trivial linear form $a$ with $a^2=0$ and $\rk (A_1\xrightarrow{\ a\ } A_2)=n-1$. By the argument in the proof  of \cite[Proposition 2.13(a)]{CRV}, one  deduces that $(0\colon a)=(a)$. Then, since $A$ is Gorenstein, so is the ring $A/(a)$, with Hilbert series $1+(n-1)z + z^{2}$. Since $k$ is algebraically closed, it follows from \cite[Lemma 3]{Con0} that $A/(a)$ has a Conca generator, in the sense recalled in the proof of Proposition~\ref{prop:BRpoints}. It now remains to apply \cite[Proposition 3.9]{HS} to deduce that $A$,
and so also $R$, has the Backelin-Roos property.
\end{proof}

\begin{rem}
Concerning the preceding result, and Proposition~\ref{prop:HilbObstruction}, if $h(z)=1+az+bz^2$ for positive integers $a,b$ the coefficients of power series expansion of the rational function $1-h(-z)/(1-z)^a$ are all non-negative if and only if $a>b$. This is exactly the condition $h(1)\leq 2 a$, that is, $e(R)\leq 2\codim(R)$. 
\end{rem}

The following is a direct corollary of Theorem \ref{thm:geoint}.

\begin{cor}
\label{cor:Veronese}
Assume $\chara k=0$. Let $S=k[x_1,\ldots,x_n]$ be a polynomial ring in $n$ variables and $c$ an integer $\ge 2$.
The Veronese subring $S^{(c)}$ of $S$ is absolutely Koszul in the following cases:
\begin{enumerate}[\quad\rm(1)]
\item $n\le 3$ and any $c$;
\item $n\le 4$ and $c\le 4$;
\item $n\le 6$ and $c=2$.
\end{enumerate}
When, in addition, $k$ is algebraically closed, $S^{(c)}$ also has the Backelin-Roos property.
\end{cor}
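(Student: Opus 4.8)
The plan is to deduce everything from Theorem~\ref{thm:geoint} applied to $R=S^{(c)}$, so all the work lies in verifying its hypotheses for each listed pair $(n,c)$.

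First I would assemble the standard facts about the Veronese. The ring $R=S^{(c)}$ is a normal Cohen--Macaulay domain, and it is geometrically integral because $\overline{k}\otimes_k R$ is again the $c$-th Veronese subring of a polynomial ring over $\overline{k}$, hence a domain. Since $R$ is the homogeneous coordinate ring of the $c$-uple Veronese embedding of $\PC^{n-1}_k$, which is smooth, the affine cone $\Spec R$ is regular away from its vertex; thus $R$ is an isolated singularity. Moreover every Veronese algebra is Koszul and so defined by quadrics, and $R$ is Gorenstein precisely when $c$ divides $n$. I would then record the numerology: $\dim R=n$, the embedding dimension is $\binom{n+c-1}{c}$, so that $\codim R=\binom{n+c-1}{c}-n$, and the multiplicity equals the degree of the Veronese variety, namely $e(R)=c^{n-1}$.

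With these in hand the proof reduces to comparing $e(R)$ with $2\codim R$ in each case. For $n=1$ the ring $R$ is a polynomial ring in one variable, hence regular and trivially a complete intersection, so it has the Backelin--Roos property. For the remaining families in (1), the inequality $e(R)\le 2\codim R$ becomes $c\le 2(c-1)$ when $n=2$ and $c^{2}\le c^{2}+3c-4$ when $n=3$, both valid for $c\ge 2$; likewise one checks $e(R)\le 2\codim R$ directly for $(n,c)\in\{(4,2),(4,3),(5,2)\}$. In all of these Theorem~\ref{thm:geoint}(1) applies. The only remaining pairs are $(n,c)=(4,4)$ and $(n,c)=(6,2)$, where instead one finds the equality $e(R)=2\codim R+2$: explicitly $e=64$ and $\codim=31$ for $(4,4)$, and $e=32$ and $\codim=15$ for $(6,2)$. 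In both of these $c$ divides $n$, so $R$ is a Gorenstein isolated singularity defined by quadrics, and Theorem~\ref{thm:geoint}(2) applies. This yields absolute Koszulness in every case, and the Backelin--Roos property when $k$ is in addition algebraically closed.

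The main obstacle is organizational rather than conceptual: one must recognize that parts (1) and (2) of Theorem~\ref{thm:geoint} together account for exactly the listed pairs, with the two borderline pairs $(4,4)$ and $(6,2)$---where part (1) just fails because $e(R)>2\codim R$---being caught by the Gorenstein clause (2). The point worth double-checking is that these are precisely the listed cases satisfying both $e(R)=2\codim R+2$ and $c\mid n$, and that the qualitative hypotheses of (2), namely the isolated-singularity and quadratic-relations conditions, hold for every Veronese algebra.
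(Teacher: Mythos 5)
Your proposal is correct and follows essentially the same route as the paper: reduce to Theorem~\ref{thm:geoint} via the numerology $e(S^{(c)})=c^{n-1}$ and $\codim S^{(c)}=\binom{n+c-1}{c}-n$, using part (1) for all pairs except $(4,4)$ and $(6,2)$, which are handled by the Gorenstein clause (2) since there $e(R)=2\codim R+2$. Your numerical values for the two borderline cases are the correct ones (the paper inadvertently swaps them), and your verification of the qualitative hypotheses of (2) is sound.
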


\begin{proof}
As is well-known, $S^{(c)}$ is geometrically integral, Cohen-Macaulay, and satisfies
\[
 e(S^{(c)})=c^{n-1} \quad\text{and}\quad \codim S^{(c)}=\binom{n+c-1}{c}-n\,.
 \]
A direct verification shows that when $n\le 3$ and $c\ge 2$, or when $n=4$ and $c=3$, or when $n=5$ and $c=2$, one has $e(S^{(c)})\le 2\codim S^{(c)}$ holds; in fact these are the only values for which the inequality holds. In these cases, desired statement follows from Theorem \ref{thm:geoint}(1).

It remains to tackle the case $n=4=c$, and $n=6$ and $c=2$; set $R=S^{(c)}$. In the first case $e(R)=32$ and $\codim R=15$, whilst in the second $e(R)=64$ and $\codim R=31$. In either case $R$ is a Gorenstein, quadratic with isolated singularity and with $e(R)=2\codim(R)+2$, so Theorem \ref{thm:geoint}(2) applies and yields the desired conclusion.
\end{proof}

Using a Gr\"obner basis argument, we can show that second Veronese subrings of polynomial rings in $n\le 6$ variables satisfy the Backelin-Roos property for any field $k$.  The key observation is the following: 

\begin{lem}
\label{lem:GBdef}
Let $I$ be an ideal of a polynomial ring $S$ and $<$ a term order. If $\ini(I)=U+V$ where $U$ and $V$ are monomial quadratic ideals such that $U$ is a complete intersection and $V$ has a $2$-linear resolution, then $S/I$ is Koszul and has the Backelin-Roos property.
\end{lem}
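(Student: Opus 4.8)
The plan is to exploit the general machinery of Gr\"obner degeneration, which says that passing from $I$ to its initial ideal $\ini(I)$ can only increase Betti numbers, so that Koszulness and the vanishing $t^{Q}_{i}(R)\le i+1$ can be checked on the monomial model. First I would recall that if $<$ is a term order and $\ini(I)=U+V$ as in the hypothesis, then the standard upper-semicontinuity of graded Betti numbers under Gr\"obner degeneration gives a coefficientwise inequality between the graded Betti numbers of $S/I$ and those of $S/\ini(I)$; more precisely, $\beta^{S}_{i,j}(S/I)\le \beta^{S}_{i,j}(S/\ini(I))$ for all $i,j$. This reduces the problem to showing that $S/\ini(I)=S/(U+V)$ is Koszul with the Backelin-Roos property, and then transferring these numerical consequences back to $S/I$.

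The key middle step is Theorem~\ref{thm:ciplus2linear}: since $U$ is a complete intersection of quadratic monomials and $V$ is a quadratic monomial ideal with a $2$-linear resolution, that theorem gives directly that $S/(U+V)$ has the Backelin-Roos property, with an explicit Golod morphism $Q\to S/\ini(I)$ where $Q=S/U$ is a complete intersection of quadrics. In particular $t^{Q}_{i}(S/\ini(I))\le i+1$ for all $i$, by Remark~\ref{rem:Golod2lin}. The plan is then to keep the \emph{same} complete intersection $Q=S/U$ as the source of the candidate Golod map onto $S/I$: because $U$ is a complete intersection of quadrics whose generators are monomials, and a Gr\"obner degeneration of $I$ produces an initial ideal containing $U$, one can choose a lift of a regular sequence of quadrics generating $U$ to a regular sequence of quadrics inside $I$, giving a complete intersection quotient $Q'$ of $S$ surjecting onto $S/I$ with $Q'$ abstractly isomorphic to $Q$.

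Next I would establish the numerical test. Using the degeneration inequality at the level of the resolution over the complete intersection $Q'$ (equivalently, comparing $\Tor^{Q'}(k,S/I)$ with $\Tor^{Q}(k,S/\ini(I))$ via the induced term order on $Q'$), one obtains $t^{Q'}_{i}(S/I)\le t^{Q}_{i}(S/\ini(I))\le i+1$ for every $i$. Since $Q'$ is a complete intersection of quadrics it is Koszul, so Remark~\ref{rem:Golod2lin} then yields simultaneously that $S/I$ is Koszul and that the map $Q'\to S/I$ is Golod. By definition this says $S/I$ has the Backelin-Roos property.

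The main obstacle I anticipate is the bookkeeping in the Gr\"obner-degeneration comparison relative to the complete intersection $Q'$ rather than to the polynomial ring $S$: one needs that the chosen quadrics lifting the monomial generators of $U$ actually form an $S$-regular sequence inside $I$ (so that $Q'$ is genuinely a complete intersection), and that the term order restricts sensibly to $Q'$ so that the Betti-number inequality $\beta^{Q'}_{i,j}(S/I)\le\beta^{Q}_{i,j}(S/\ini(I))$ holds. The cleanest route is probably to stay over $S$ throughout---deduce Koszulness of $S/I$ from $\beta^{S}_{i,j}(S/I)\le\beta^{S}_{i,j}(S/\ini(I))$ and linearity of the resolution of $k$---and separately invoke that $I$ contains a complete intersection of $c$ quadrics reducing mod the monomial part $U$, so that the Golod test $t^{Q'}_{i}(S/I)\le i+1$ can be verified directly as in the proof of Lemma~\ref{lem:br-dim} by an inductive estimate on the Tor modules. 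I would keep the argument at the level of these Betti-number comparisons rather than constructing explicit resolutions.
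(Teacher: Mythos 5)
Your primary route is the same as the paper's: lift the monomial generators of $U$ to elements $g_1,\dots,g_p$ of a Gr\"obner basis of $I$ with $\ini(g_i)=u_i$, observe that $J=(g_1,\dots,g_p)$ is a complete intersection of quadrics with $\ini(J)=U$, apply Theorem~\ref{thm:ciplus2linear} to get $\reg_{S/U}(S/\ini(I))=1$, and then transfer this bound from the initial-ideal side to conclude $t^{S/J}_i(S/I)\le i+1$, whence Remark~\ref{rem:Golod2lin} gives both Koszulness of $S/I$ and Golodness of $S/J\to S/I$ in one stroke (so no separate Koszulness argument over $S$ is needed). The one step you flag as the ``main obstacle''---the comparison of $\Tor^{S/J}(k,S/I)$ with $\Tor^{S/U}(k,S/\ini(I))$---is exactly where the paper invokes the relative Gr\"obner deformation inequality $\reg_{S/J}(S/I)\le \reg_{S/\ini(J)}(S/\ini(I))$ from \cite[Prop.~3.13]{BC}; this is a citable fact, so your argument closes once you appeal to it. Your worry (a) is easily dispatched: $U\subseteq\ini(J)$ forces $\height J=p$, so the $g_i$ form a regular sequence, and equality of Hilbert series of $S/J$ and $S/U$ then forces $\ini(J)=U$.

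Two of your hedges, however, should be dropped because they do not work. First, $Q'=S/J$ need not be ``abstractly isomorphic'' to $Q=S/U$; all that is needed (and all that is true) is that it is a complete intersection of quadrics, hence Koszul. Second, neither fallback in your last paragraph is viable: the inequality $\beta^S_{i,j}(S/I)\le\beta^S_{i,j}(S/\ini(I))$ concerns the resolution of $S/I$ over $S$ and says nothing about the resolution of $k$ over $S/I$, so it cannot by itself yield Koszulness; and the inductive Tor estimate of Lemma~\ref{lem:br-dim} relies on an explicit computation of $\Tor^P(R,Q)$ for a single regular quadric, which has no analogue here. The relative deformation result over the quotient $S/J$ is genuinely the load-bearing ingredient and should be cited rather than circumvented.
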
 

\begin{proof}  
By assumption $U=(u_1,\dots,u_p)$ where $u_i$ is a quadratic monomials and $\gcd(u_i,u_j)=1$ whenever $i\neq j$. Furthermore $V=(v_1,\dots,v_t)$ where the $v_i$'s are quadratic monomials. Consider a  Gr\"obner basis of  $I$, say  $g_1,\dots,g_p,f_1,\dots,f_t$, such  that $\ini(g_i)=u_i$ and $\ini(f_j)=v_j$ for every $i$ and $j$.  Let $J=(g_1,\dots,g_p)$.  It follows immediately that $g_1,\dots,g_p$ is a regular sequence and that $\ini(J)=U$.  Since $\ini(I)=U+V$ we may apply Theorem~\ref{thm:ciplus2linear} and deduce that $\reg_{S/\ini(J)}(S/\ini(I))=1$. Then the standard Gr\"obner deformation \cite[Prop.3.13]{BC} argument yields that  
\[
\reg_{S/J}(S/I)\leq \reg_{S/\ini(J)}(S/\ini(I))=1\,.
\]
It follows that the map $S/J\to S/I$ is Golod. 
\end{proof} 

\begin{cor}
\label{cor:anyk}
Let $k$ be an arbitrary field and $S=k[x_1,\dots,x_n]$. Then $S^{(2)}$ has the Backelin-Roos property for every  $n\leq 6$. 
\end{cor}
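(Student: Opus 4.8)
The plan is to apply Lemma~\ref{lem:GBdef}. Write $S^{(2)}=T/I$, where $T=k[z_{ij}\mid 1\le i\le j\le n]$ is a polynomial ring and $I$ is the kernel of the $k$-algebra surjection $T\to S^{(2)}$ sending $z_{ij}$ to $x_ix_j$; this is the toric defining ideal of the second Veronese, generated by the $2\times 2$ minors of the generic symmetric $n\times n$ matrix. The decisive structural feature is that $I$ is a \emph{toric} ideal: it is generated by binomials with coefficients $\pm 1$, and both $I$ and its reduced Gr\"obner basis with respect to any fixed term order are determined by purely combinatorial data. In particular the initial ideal $\ini(I)$ is \emph{independent of the field $k$}. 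This independence is exactly what lets one upgrade a verification over a single field to a statement over an arbitrary $k$.

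First I would fix, for each $n\in\{2,3,4,5,6\}$, a convenient term order on $T$ — for instance a reverse lexicographic order relative to a suitable ordering of the variables $z_{ij}$, or a sorting order adapted to Veronese ideals — for which $I$ admits a quadratic Gr\"obner basis, so that $\ini(I)$ is a monomial ideal generated in degree two. This reduces the problem to a finite combinatorial question about the monomial ideal $\ini(I)$. The heart of the argument is then to produce, for each such $n$, a decomposition $\ini(I)=U+V$ in which $U$ is a monomial complete intersection of quadrics (a set of quadratic generators with pairwise coprime supports) and $V$ is a quadratic monomial ideal with a $2$-linear resolution. I would build $U$ by selecting a maximal family of quadratic generators of $\ini(I)$ with pairwise disjoint supports and take $V$ generated by the remaining quadrics; the verification that $V$ is $2$-linear is then carried out by checking that $V$ has linear quotients, via the Herzog--Hibi--Zheng criterion \cite[Theorem 3.2]{HHZ} already invoked in the proof of Theorem~\ref{thm:ciplus2linear}. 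Once such a decomposition is exhibited, Lemma~\ref{lem:GBdef} gives at once that $S^{(2)}=T/I$ is Koszul and has the Backelin-Roos property, and since $\ini(I)$, and hence the entire decomposition, is characteristic-free, the conclusion holds over every field $k$.

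The main obstacle is the existence of the decomposition itself, and this is where the hypothesis $n\le 6$ is essential rather than incidental: the result is genuinely sharp, since $S^{(2)}$ fails the Backelin-Roos property for $n=7$, violating the Hilbert-series inequality of Proposition~\ref{prop:HilbObstruction} (compare Corollary~\ref{cor:HilbObstruction} and the case $(7,2)$). Consequently no uniform construction can possibly work, and the real task is a finite but careful case analysis for $n=2,3,4,5,6$, in which the term order must be chosen so as to split off from $\ini(I)$ a complete intersection $U$ that is as large as possible — at most $\codim S^{(2)}=\binom{n}{2}$ quadrics, since $U\subseteq\ini(I)$ forces $\codim U\le\codim\ini(I)=\binom{n}{2}$ — while leaving a remainder $V$ that is still $2$-linear. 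Balancing this numerical budget, which is precisely the count governed by Proposition~\ref{prop:HilbObstruction}, is what can be achieved for $n\le 6$ and what breaks down for $n\ge 7$.
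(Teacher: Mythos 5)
Your strategy coincides with the paper's: realize $S^{(2)}=T/I$ as a quotient of $T=\Sym(S_2)$ by the toric ideal of $2\times 2$ minors of the generic symmetric matrix, use the fact that the quadratic binomials form a characteristic-free Gr\"obner basis for degrevlex orders, split $\ini(I)$ as a monomial complete intersection plus a $2$-linear monomial ideal, and invoke Lemma~\ref{lem:GBdef}. However, as written your argument has a genuine gap: the entire content of the corollary is the existence of a term order for which the decomposition $\ini(I)=U+V$ actually works, and you never exhibit one nor verify that the residual ideal $V$ is $2$-linear; you explicitly defer this as ``a finite but careful case analysis'' and assert it ``can be achieved for $n\le 6$.'' Moreover, your recipe --- pick any convenient order, let $U$ be a maximal family of generators of $\ini(I)$ with pairwise coprime supports, and hope the remainder has linear quotients --- is not justified: for a generic degrevlex order there is no reason the leading terms should contain a regular sequence of the full codimension $\binom{n}{2}$, nor that the leftover squarefree part should be $2$-linear, and maximality of $U$ does not control $V$.

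What the paper does, and what your proposal is missing, is the explicit construction for $n=6$: order the variables so that the diagonal variables $x_{11},\dots,x_{66}$ are smallest, followed by $x_{12},x_{34},x_{56}$, and take degrevlex. Then each binomial $g_{ij}=x_{ij}^2-x_{ii}x_{jj}$ has leading term $x_{ij}^2$, so $U=(x_{ij}^2: i<j)$ is a complete intersection of $15$ quadrics, and the residual ideal $L$ is generated by squarefree quadrics in the off-diagonal variables. The $2$-linearity of $L$ is then proved via Fr\"oberg's theorem by showing the complement graph is chordal; this uses a count ($15$ vertices and exactly $15$ edges, namely $6+5+4$ edges emanating from $x_{12}$, $x_{34}$, $x_{56}$) which depends delicately on placing $x_{12},x_{34},x_{56}$ immediately after the diagonal variables. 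Without this (or some equivalent explicit verification) your argument establishes only that the statement \emph{would} follow from Lemma~\ref{lem:GBdef} if a suitable order existed, not that it does. Your observation that the initial ideal is characteristic-free, which is what removes the hypothesis $\chara k=0$ needed in Corollary~\ref{cor:Veronese}, is correct and is indeed the point of this corollary.
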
 

\begin{proof} 
Let $T=k[x_{ij}:i\le j]=\Sym(S_2)$ and $\phi\colon T\to S^{(2)}$ be the canonical morphism mapping the variable $x_{ij}$ to the monomial $x_ix_j$ for $1\le i\le j\le n$.  We will  apply Lemma~\ref{lem:GBdef}  to the $\ker \phi$. We do it for $n=6$. For smaller values of $n$ the argument is easier. 
  The kernel of $\phi$ contains all the binomials of the from  $x_{ab}x_{cd}-x_{ef}x_{gh}$
where $\{a,b,c,d\}=\{e,f,g,h\}$ as  multisets. Indeed these binomials form Gr\"obner basis of $\ker \phi$ with respect to every degree reverse lexicographic term order on $T$. 
Let $<$ be the degree reverse lexicographic term order on $T$ associated to the following total order of the variables
\[
x_{11}<x_{22}<\dots<x_{66}<x_{12}<x_{34}<x_{56}< \dots \mbox{ all the other variables in any order}.
\]
For every $1\leq i<j\leq 6$ consider the the binomial
\[
g_{ij}=x_{ij}^2-x_{ii}x_{jj}
\]
of $\ker \phi$ whose leading term is $x_{ij}^2$.  
By what we have said above and taking into consideration the underlying  multigraded graded structure  we have that 
\[
\ini(\ker \phi)=( x_{ij}^2 : 1\leq i<j\leq 6)+L
\]
where $L$ is generated   by square-free monomials of degree $2$ in the variables $x_{ij}$ with $i<j$.  We have to show that $L$ has a $2$-linear resolution over the polynomial ring $T$. In order to do that we associate to $L$ a graph $G$ with vertices $x_{ij}$ with $1\leq i<j\leq 6$ and edges connecting $x_{ij}$ to $x_{hl}$ if and only if $x_{ij}x_{hl}\not\in L$. According to Fr\"oberg theorem \cite{F}, it is enough to show that $G$ is chordal, i.e.~every cycle of length $\ge 4$ has a chord. A simple  count shows that $G$ has $15$ vertices and $15$ edges. 
 It is also clear that the following are edges of $G$:  
\[
\begin{array}{l}
 \text{ $6$ edges of the form }   x_{12}-x_{ab}, ~ \text{with $\{a,b\}$ disjoint from $\{1,2\}$,}\\
 \text{ $5$ edges of the form } x_{34}-x_{ab},  ~ \text{with $\{a,b\}$ disjoint from $\{3,4\}$ and ${(a,b)}\neq {(1,2)}$,}\\
\text{ $4$ edges of the form }  x_{56}-x_{ab},  ~ \text{with $\{a,b\}$ disjoint from $ \{5,6\}$ and ${(a,b)}\neq {(1,2)}$  and ${(3,4)}$  }.
\end{array}
\]
Since $6+5+4=15$, these are all the edges of $G$. We can now see that the graph $G$ is chordal, since every vertex which is different from
$x_{12}, x_{34}$ and $x_{56}$ has only degree $1$. For example $x_{35}$ is connected to $x_{12}$ but not to $x_{34}$ or to $x_{56}$ and so on. This finishes the proof of the statement.
 \end{proof} 

However most Veronese subrings  do not have  the Backelin-Roos property.

\begin{lem}
\label{lem:obstructedVeronese}  
Set $S=k[x_1,\dots,x_n]$ and let $h_{n,c}(z)$ denote the $h$-polynomial of $S^{(c)}$.  One has an inequality $h_{n,c}(-1)>0$ for the following values $(n,c)=(7,2), (5,4), (5,3), (4,c)$ with $c>4$. In particular $S^{(c)}  $ does not have the Backelin-Roos property from those values on $(n,c)$. 
\end{lem}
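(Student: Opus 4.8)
The plan is to reduce the claim to a finite, explicit verification. By Corollary~\ref{cor:HilbObstruction}, any standard graded Koszul $k$-algebra with the Backelin-Roos property that is not a complete intersection satisfies $g(-1)<0$, where $h_R(z)=g(z)(1+z)^a$ with $g(-1)\neq 0$ and $a$ the order of vanishing of $h_R$ at $-1$. The Veronese $S^{(c)}$ is Koszul (it is defined by a Gr\"obner basis of quadrics) and, for the values $(n,c)$ in question, is visibly not a complete intersection, since its codimension is large. Thus it suffices to produce, for each listed $(n,c)$, the concrete $h$-polynomial $h_{n,c}(z)$ and to check that $h_{n,c}(-1)>0$: this already forces $a=0$ (so $g=h_{n,c}$ and $g(-1)=h_{n,c}(-1)>0$), contradicting the inequality $g(-1)<0$ that the Backelin-Roos property would entail. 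Hence $S^{(c)}$ cannot have the Backelin-Roos property for those $(n,c)$.

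First I would recall the standard formula for the Hilbert series of the Veronese subring. Writing $H_S(z)=(1-z)^{-n}$ for the polynomial ring, the $c$-th Veronese has Hilbert series obtained by extracting every $c$-th coefficient, so its numerator $h$-polynomial in the variable $z$ (with denominator $(1-z)^{\dim S^{(c)}}=(1-z)^n$) is the $h$-polynomial encoding the dimensions $\dim_k (S^{(c)})_i=\binom{ci+n-1}{n-1}$. Concretely, I would use the clean description
\[
H_{S^{(c)}}(z)=\frac{h_{n,c}(z)}{(1-z)^n},
\]
and evaluate $h_{n,c}(-1)$ for each pair. This is purely a numerical computation: for $(7,2)$, $(5,4)$, $(5,3)$, and $(4,c)$ with $c>4$, I would compute $h_{n,c}(z)$ explicitly (for the infinite family $(4,c)$ one needs a formula in $c$ rather than a single number) and check positivity of the value at $-1$.

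The main obstacle, and the only genuinely non-routine part, is the infinite family $(4,c)$ with $c>4$, since here one cannot simply tabulate finitely many polynomials. For this I would derive a closed expression or a manageable recursion for $h_{4,c}(-1)$ as a function of $c$ and prove it is positive for all $c>4$. One convenient route is the relation $h_{n,c}(-1)=\sum_i (-1)^i \dim_k(S^{(c)})_i$ interpreted via the generating function $H_{S^{(c)}}(z)(1-z)^n$ evaluated at $z=-1$, which amounts to an alternating-sum-of-binomials identity in $c$; I expect this to reduce, after simplification, to a positive quantity (for $n=4$, a low-degree polynomial in $c$ that is manifestly positive for $c>4$). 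The finite cases $(7,2)$, $(5,4)$, $(5,3)$ are then dispatched by direct evaluation, and combining these with Corollary~\ref{cor:HilbObstruction} completes the argument.
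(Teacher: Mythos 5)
Your proposal is correct and follows essentially the same route as the paper: compute the $h$-polynomial of $S^{(c)}$ from its Hilbert function $\dim_k(S^{(c)})_i=\binom{n-1+ic}{n-1}$, evaluate at $z=-1$ for each listed pair, and invoke Corollary~\ref{cor:HilbObstruction} (after noting these Veroneses are not complete intersections, so $h_{n,c}(-1)>0$ forces $a=0$ and $g(-1)>0$, a contradiction). The one piece you leave as an expectation, a closed positive expression for the infinite family, is exactly what the paper supplies, namely $h_{4,c}(-1)=\tfrac{1}{3}(c-4)(c^{2}+4c-6)$, which is visibly positive for $c>4$.
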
 

\begin{proof} 
Set $R=S^{(c)}$. The Hilbert function of $R$ is given by 
\[
\HF(R,i)=\binom{n-1+ic}{n-1}\,.
\]
It coincides with the Hilbert polynomial for all $i\geq 0$. This implies that $\deg h_{n,c}(z)<n$, say 
\[
h_{n,c}(z)=\sum_{i=0}^{n-1} h_i z^i
\]  
where 
\[
h_i=\sum_{j\geq 0} (-1)^{i-j} \binom{n-1+jc}{n-1} \binom{n}{i-j}.
\]
When $(n,c)$ equals $(7,2)$, $(5,4)$, and $(5,3)$ this gives
\begin{align*}
h_{7,2}(z) &=1+21z+35z^2+7z^3 \\
h_{5,4}(z) &=1+65z+155z^2+35z^3\\
h_{5,3}(z) &=1+30z+45z^2+5z^3 
\end{align*}
These functions are  positive when evaluated at $z=-1$.  Finally for $(n,c)=(4,c)$ one uses the formula above to compute $h_0,h_1,h_2,h_3$ and then obtains: 
\[
h_{4,c}(-1)=1/3(c-4)(c^2 + 4c - 6)
\]
that  is clearly positive when $c>4$. 
\end{proof} 

\begin{rem} 
\label{rem:whythis}
The set $T=\{ (7,2),  (5,4), (5,3), (4,c) : c>4\}$ of pairs  we have tested in \ref{lem:obstructedVeronese} is chosen so that for any pair $(n,c)$ that does not satisfy the conditions of  Corollary~\ref{cor:Veronese} there exists a pair   $(m,c)$ in $T$ with $m\leq n$, so that  $k[x_1,\dots,x_m]^{(c)}$ is an algebra retract of $k[x_1,\dots,x_n]^{(c)}$. Therefore a positive answer to Question \ref{que:retract} in combination with Lemma~\ref{lem:obstructedVeronese} would imply that the list of  Veronese algebras with the Backelin-Roos property given in  Corollary~\ref{cor:Veronese} is complete.

\medskip

A caveat:  $h_{n,c}(-1)\leq 0$ for certain values of $(n,c)$. Indeed, a direct computation yields
\[
h_{6,7}(z)=6z^5 + 1251z^4 + 7872z^3 + 6891z^2 + 786z + 1
\]
so $h_{6,7}(-1)=-521$. Nevertheless,  
\[
1-\frac{h_{6,7}(-z)}{(1-z)^{786}}= 301614z^2 +156453836z^3+\cdots +\alpha z^{121} +\cdots 
\]
where $\alpha\simeq -(1.5)  10^{152}$.  Hence, by virtue of Proposition~\ref{prop:HilbObstruction},  the corresponding Veronese subring  does not have the Backelin-Roos property.   

\medskip

Indeed, computational evidence suggests that the only Veronese algebras that satisfy the conditions of Proposition~\ref{prop:HilbObstruction} are  those listed in Corollary~\ref{cor:Veronese}. 
\end{rem}

\subsection*{Segre products} 
Let $S_{m,n}$ denote the Segre product of polynomial rings $A=k[x_1,\ldots,x_m]$ and $B=k[y_1,\ldots,y_n]$ where $m\le n$, and let $h_{m,n}$ denote its $h$-polynomial. Obviously the Hilbert function of $S_{m,n}$ is 
\[
\HF(S_{m,n},i)= \dim A_i\dim B_i=\binom{m-1+i}{m-1}\binom{n-1+i}{n-1}\,.
\]
One deduces immediately that 
 \begin{gather*}
e(S_{m,n})=\binom{m+n-2}{m-1}\quad\text{and} \\
h_{m,n}(z)=\sum_{i=0}^{m-1} \binom{m-1}{i}\binom{n-1}{i}z^i.
\end{gather*}
In analogy with the Veronese case we have: 

\begin{prop}
Assume $k$ has characteristic $0$. The ring $S_{m,n}$ is absolutely Koszul when 
\begin{enumerate}[\quad\rm(1)]
\item $m\leq 2$,
\item $m=3$ and $n\le 5$,
\item $m=n=4$.
\end{enumerate}
When, in addition, $k$ is algebraically closed, these $S_{m,n}$ also have the Backelin-Roos property.
\end{prop}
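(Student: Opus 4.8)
The plan is to proceed exactly as in the proof of Corollary~\ref{cor:Veronese}, by verifying that each ring in the list satisfies the hypotheses of Theorem~\ref{thm:geoint}. First I would record the standard facts about the Segre product: $S_{m,n}$ is the homogeneous coordinate ring of the Segre embedding $\PC^{m-1}\times\PC^{n-1}\hookrightarrow\PC^{mn-1}$, so it is geometrically integral and Cohen-Macaulay, and it is defined by the $2\times 2$ minors of the generic $m\times n$ matrix, hence quadratic. From $\dim S_{m,n}=m+n-1$ and $\embdim S_{m,n}=mn$ one computes
\[
\codim S_{m,n}=mn-(m+n-1)=(m-1)(n-1),
\]
while the multiplicity $e(S_{m,n})=\binom{m+n-2}{m-1}$ is recorded above. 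By Proposition~\ref{prop:br=ak} it suffices to treat the case $k=\overline{k}$ and establish the Backelin-Roos property, from which the absolutely Koszul conclusion follows.

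For cases (1) and (2) I would simply check the inequality $e(S_{m,n})\le 2\codim S_{m,n}$ and invoke Theorem~\ref{thm:geoint}(1). When $m=1$ the ring is a polynomial ring and the assertion is trivial, so assume $m\ge 2$. For $m=2$ the inequality reads $n\le 2(n-1)$, which holds for every $n\ge 2$; for $m=3$ it becomes $\binom{n+1}{2}\le 4(n-1)$, i.e.\ $n^{2}-7n+8\le 0$, which holds precisely for $3\le n\le 5$. This settles cases (1) and (2).

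For case (3), where $m=n=4$, one has $e(S_{4,4})=\binom{6}{3}=20$ and $\codim S_{4,4}=9$, so $e=2\codim+2$ and Theorem~\ref{thm:geoint}(1) does not apply; instead I would verify the hypotheses of part (2). The Segre product $S_{m,n}$ is Gorenstein exactly when $m=n$ (the canonical module of the Segre product is the Segre product of the canonical modules $A(-m)$ and $B(-n)$, which is a twist of $S_{m,n}$ precisely when $m=n$), so $S_{4,4}$ is Gorenstein; it is quadratic as noted; and since the Segre variety $\PC^{3}\times\PC^{3}$ is smooth, the affine cone $\Spec S_{4,4}$ is regular away from the irrelevant ideal, so $S_{4,4}$ is an isolated singularity. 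Theorem~\ref{thm:geoint}(2) then applies and yields the Backelin-Roos property. The hard part is precisely this last case: the work lies in confirming all three structural requirements of part (2)---Gorensteinness (which is what forces $m=n$), the isolated-singularity property, and quadraticity---whereas cases (1) and (2) reduce to the elementary inequalities above.
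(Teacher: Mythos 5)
Your proof is correct and is exactly the argument the paper intends: the Proposition is stated with no written proof beyond the phrase ``in analogy with the Veronese case,'' and your verification---checking $e(S_{m,n})=\binom{m+n-2}{m-1}\le 2(m-1)(n-1)$ for cases (1) and (2) so that Theorem~\ref{thm:geoint}(1) applies, and checking that $S_{4,4}$ is a quadratic Gorenstein isolated singularity with $e=2\codim+2$ so that Theorem~\ref{thm:geoint}(2) applies---mirrors the proof of Corollary~\ref{cor:Veronese} step for step, with the numerics done correctly.
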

 
The Segre products $S_{3,6}, S_{4,5}$  do not have the Backelin-Roos property, as can be verified by computations similar to the one for Lemma~\ref{lem:obstructedVeronese}. And the argument of Remark~\ref{rem:whythis} applies to this situation too. 

\section{Global linearity defects}
\label{sect_glind}
In this section we investigate bounds on the linearity defect of modules. We begin with the following observation.

\begin{lem}
\label{lem:deg2}
Let $R$ be a standard graded $k$-algebra and $N$ a finitely generated graded $R$-module. If $x\in R$ is an $N$-regular element of degree $d\ge 2$, then there is an equality
\[
\lind_R(N/xN)=\lind_R N+1\,.
\]
\end{lem}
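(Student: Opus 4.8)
The goal is to relate the linearity defect of $N$ to that of $N/xN$ when $x$ is an $N$-regular element of degree $d\ge 2$. The plan is to work directly with minimal graded free resolutions and their linear parts, exploiting the fact that the degree of $x$ is at least $2$, so that multiplication by $x$ contributes only ``non-linear'' entries to the differentials.

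First I would fix a minimal graded free resolution $F$ of $N$, with differential $\partial$. Since $x$ is $N$-regular, the mapping cone of the map $F(-d)\xrightarrow{x} F$ is a minimal graded free resolution $G$ of $N/xN$: minimality holds precisely because $x\in\mm$ (indeed $\deg x\ge 2$ guarantees $x\in\mm^2$, but $x\in\mm$ is all that is needed for minimality of the cone). Explicitly, $G_i = F_i \oplus F_{i-1}(-d)$, with differential built from $\partial$ on each summand together with the connecting map $\pm x$ from $F_{i-1}(-d)$ into $F_{i-1}$.

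The key step is to compute the linear part $\linp^R G$. Recall from the discussion at the start of Section~\ref{sect_general} that the matrices of the differential of $\linp^R G$ are obtained from those of $G$ by deleting all entries of degree $\ge 2$. The crucial point is that the component of the differential of $G$ given by multiplication by $x$ has degree $d\ge 2$, hence is \emph{erased} in passing to the linear part. Therefore the linear part splits as a direct sum of complexes:
\[
\linp^R G \cong \linp^R F \oplus (\linp^R F)(-d)[-1]\,,
\]
where $[-1]$ denotes the shift in homological degree coming from the mapping cone structure. Taking homology then yields, for each $i$,
\[
H_i(\linp^R G) \cong H_i(\linp^R F)\oplus H_{i-1}(\linp^R F)(-d)\,.
\]
From this direct sum decomposition the equality $\lind_R(N/xN)=\lind_R N+1$ follows at once by taking suprema over the (homological) indices $i$ with nonvanishing homology: the top nonzero homology of $\linp^R G$ sits in homological degree $(\lind_R N)+1$, coming from the shifted copy.

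The main obstacle I anticipate is the verification that the linear part of the mapping cone genuinely decomposes as the stated direct sum, i.e.\ that no interaction survives between the two copies of $\linp^R F$ after erasing degree-$\ge 2$ entries. This is exactly where the hypothesis $d\ge 2$ is indispensable: were $\deg x = 1$, the connecting map would be linear and would contribute to $\linp^R G$, coupling the two summands and invalidating the clean splitting (and indeed the conclusion would generally fail). I would therefore be careful to track degrees in the mapping-cone differential and confirm that the only off-diagonal block is multiplication by $x$, of internal degree $d\ge 2$, which is discarded. A minor secondary point is to confirm minimality of $G$ so that $\linp^R G$ is computed from $G$ itself and no cancellation occurs; this is routine once $x\in\mm$ is noted.
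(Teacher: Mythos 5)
Your proof is correct and takes essentially the same route as the paper's: both form the mapping cone $G$ of $F(-d)\xrightarrow{\ x\ } F$, observe that the connecting map lands in $\mm^2 F$ because $\deg x\ge 2$ and is therefore erased in the linear part, and conclude from the resulting splitting $\linp^R G\cong \linp^R F\oplus(\linp^R F)[-1]$ (up to an internal twist, which you track and the paper suppresses) that the top nonvanishing homology shifts up by exactly one.
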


\begin{proof}
Consider the exact sequence of $R$-modules
\[
0\to N(-d) \xrightarrow{\ x\ } N \to N/xN \to 0.
\]
Let $F$ be the minimal graded free resolution of $N$ over $R$. The mapping cone, say $G$, of the morphism $\phi\colon F(-d) \xrightarrow{\ x\ } F$ of complexes is then the minimal graded $R$-free resolution of $N/xN$. Since $\Img(\phi) \subseteq \mm^2 F$, where $\mm$ is the homogeneous maximal ideal of $R$, it is easy to see that 
\[
\linp^R(G)=\linp^R F \oplus (\linp^R F)[-1]\,.
\]
The desired equality is now immediate. 
\end{proof}

\begin{thm}
Let $R$ be a standard graded $k$-algebra. Then $\glind R[x]=\glind R+1.$
\end{thm}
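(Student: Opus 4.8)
The plan is to prove the equality $\glind R[x] = \glind R + 1$ by establishing the two inequalities separately, using the two lemmas developed just before the statement together with the structural results about polynomial extensions.

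First I would prove the inequality $\glind R[x] \geq \glind R + 1$. The idea is to exhibit a single $R[x]$-module whose linearity defect exceeds $\glind R$ by exactly one. Let $M$ be a finitely generated graded $R$-module achieving (or approaching) the supremum $\glind R$. View $M$ as an $R[x]$-module via the augmentation $R[x] \to R$ sending $x$ to $0$; equivalently, $M = M'/xM'$ where $M' = M \otimes_R R[x]$ is the extension of $M$ to $R[x]$. The element $x$ is a nonzerodivisor on $R[x]$ of degree $d=1$, so Lemma~\ref{lem:deg2} does not apply directly, since that lemma requires $\deg x \geq 2$. The natural remedy is to work instead with the variable in degree raised, or more carefully to compute $\lind_{R[x]} M$ directly: since $M'$ is a flat (indeed free-on-a-basis) extension of $M$, its minimal $R[x]$-free resolution is $F \otimes_R R[x]$ where $F$ is the minimal $R$-free resolution of $M$, and one checks that $\linp^{R[x]}(F\otimes_R R[x]) \cong \linp^R F \otimes_R R[x]$, giving $\lind_{R[x]} M' = \lind_R M$. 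Then the short exact sequence $0 \to M'(-1) \xrightarrow{x} M' \to M \to 0$ and a mapping-cone computation analogous to Lemma~\ref{lem:deg2} — but now tracking that the multiplication by the degree-one form $x$ contributes a \emph{linear} entry, hence does survive in the linear part — must be analysed to extract the $+1$. The cleanest route is to argue that the linear part of the mapping cone splits as $\linp^{R[x]} F' \oplus (\linp^{R[x]} F')[-1]$ only when the connecting map lands in $\mm^2$; since here it does not, I expect instead to get that $\lind_{R[x]} M = \lind_R M + 1$ via a direct homology computation on the mapping cone.

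Next I would prove the reverse inequality $\glind R[x] \leq \glind R + 1$. Here the tool is Theorem~\ref{thm:glind-flatdim} applied to the surjection $R[x] \to R[x]/(x) \cong R$. However that theorem produces bounds on $\glind$ of the \emph{source} in terms of the target, so I would rather apply it to the flat inclusion $R \to R[x]$ in combination with Corollary~\ref{cor:poly}. Concretely, $\flatdim_R R[x]$ is zero (it is free over $R$), but the relevant morphism is $\varphi\colon R[x]\to R$ with $\flatdim_{R[x]} R = 1$ (resolved by the Koszul complex on $x$) and $\dim(R/\mm_{R[x]}R) = 0$. Feeding these into Theorem~\ref{thm:glind-flatdim} gives $\glind R[x] \leq \glind R + 1 + 0 = \glind R + 1$, provided $R$ is absolutely Koszul; and if $R$ is not absolutely Koszul then $\glind R = \infty$ and the inequality is vacuous, while Corollary~\ref{cor:poly} guarantees $R[x]$ is not absolutely Koszul either, so both sides are infinite.

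The main obstacle I anticipate is the lower bound, specifically the degree-one subtlety: Lemma~\ref{lem:deg2} is stated only for regular elements of degree $d \geq 2$, precisely because a degree-one multiplication contributes to the linear part rather than to the degree-$\geq 2$ part of the differential, so the clean mapping-cone splitting fails. I expect to need a careful direct argument showing that, for the augmentation module $M = R[x]/(x)\text{-module}$, the homology of $\linp^{R[x]}$ of the mapping cone acquires exactly one extra top-degree nonvanishing homology group. An alternative and perhaps safer strategy for the lower bound is to choose $M$ over $R$ with $\lind_R M = \glind R$ and show that the $R[x]$-module $M$ (augmented) satisfies $\lind_{R[x]} M \geq \lind_R M + 1$ by comparing the two linear parts after restricting scalars along $R[x] \to R$, invoking the isomorphism $\linp^R(\text{restriction}) \cong \linp^{R[x]}(\,\cdot\,)$ in the spirit of Lemma~\ref{lem:field-extension}(2) and Proposition~\ref{prop:retracts}(1). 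Reconciling these two computations to pin down the exact value $+1$, rather than merely a bound, is where the real care is required.
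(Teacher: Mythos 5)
Your upper bound is correct and is essentially the paper's argument: Proposition~\ref{prop:retracts}, applied to the retraction pair $R\hookrightarrow R[x]\twoheadrightarrow R$, gives $\glind R\le \glind R[x]$, and Theorem~\ref{thm:glind-flatdim}, applied to the surjection $R[x]\to R$ with $\flatdim_{R[x]}R=1$ and $\dim(R/\mm' R)=0$ for $\mm'$ the maximal ideal of $R[x]$, gives $\glind R[x]\le \glind R+1$; your handling of the case $\glind R=\infty$ is also fine.

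The lower bound, however, has a genuine gap. The module you propose --- $M$ viewed as an $R[x]$-module through the augmentation $x\mapsto 0$, that is $N/xN$ with $N=R[x]\otimes_R M$ --- does not in general satisfy $\lind_{R[x]}(N/xN)=\lind_R M+1$, and the ``direct homology computation on the mapping cone'' you anticipate would come out the wrong way. Concretely, take $R=k$ and $M=k$: then $N/xN=k$ and $\lind_{k[x]}k=0$, not $1$. The reason is exactly the degree-one subtlety you flag: multiplication by $x$ contributes a \emph{linear} entry of the differential, so it survives in $\linp^{R[x]}$ of the mapping cone and tends to kill homology there rather than create it; your ``alternative, safer strategy'' asserting $\lind_{R[x]}M\ge\lind_R M+1$ for the augmented module fails for the same reason. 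You correctly diagnose that Lemma~\ref{lem:deg2} is unavailable for the degree-one element $x$, but the remedy is not to force a degree-one analogue of it: the paper's one-line fix is to apply Lemma~\ref{lem:deg2} to the element $x^{2}$, which is $N$-regular of degree $2$. Since $\lind_{R[x]}N=\lind_R M$ by flatness of $R\to R[x]$, one gets $\lind_{R[x]}(N/x^{2}N)=\lind_R M+1$, and choosing $M$ with $\lind_R M=\glind R$ finishes the proof. (In the toy example above this produces $k[x]/(x^{2})$, whose two-term resolution has differential $x^{2}$, hence linearity defect exactly $1$.)
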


\begin{proof}
To begin there, there are inequalities
\[
\glind R \le \glind R[x] \le \glind R+1
\]
where the one on the left is from Proposition~\ref{prop:retracts}, applied to the canonical surjection $R[x]\to R$, and the one on the right is from Theorem~\ref{thm:glind-flatdim}, applied to the inclusion $R\to R[x]$. We may thus assume that $\glind R$ is finite and then it suffices to check that $\glind R[x] \ge \glind R+1$.

Let $M$ be a finitely generated graded $R$-module with $\lind_R M=\glind R$. Setting $S=R[x]$ and $N=S\otimes_{R}M$, there are equalities
\[
\lind_{S}(N/x^2N) =\lind_S N +1 = \lind_{R}M + 1
\]
where the one on the left is by Lemma \ref{lem:deg2} and the one on the right holds because $R\to S$ is flat. This justifies the desired inequality.
\end{proof}

\begin{prop}
\label{prop:glind-inequality}
Let $R$ be a standard graded $k$-algebra, $M$ a finitely generated graded $R$-module. Then there is an inequality
\[
\lind_R M+\depth_{R} M \le \glind R.
\]
In particular, $\glind R \ge \depth R$.
\end{prop}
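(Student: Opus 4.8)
The plan is to prove the inequality $\lind_R M + \depth_R M \le \glind R$ by constructing, from $M$, an auxiliary module whose linearity defect exceeds that of $M$ by exactly $\depth_R M$, thereby witnessing the bound on $\glind R$. The natural tool is Lemma~\ref{lem:deg2}, which raises the linearity defect by one upon quotienting by a regular element of degree $\ge 2$. Since $\depth_R M$ counts the length of a maximal $M$-regular sequence, the idea is to iterate the lemma $\depth_R M$ times. The technical subtlety is that Lemma~\ref{lem:deg2} requires a regular element of degree at least two, whereas a maximal $M$-regular sequence in a standard graded algebra is typically chosen in degree one. I would circumvent this by passing to a polynomial extension.

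Concretely, set $d=\depth_R M$. First I would enlarge the field if necessary so that $k$ is infinite (permissible by Lemma~\ref{lem:field-extension}, which leaves both $\lind$ and $\glind$ unchanged), ensuring the existence of an $M$-regular sequence of linear forms. Next, introduce indeterminates and work over $S=R[t_1,\dots,t_d]$ with $N=S\otimes_R M$; since $R\to S$ is flat one has $\lind_S N=\lind_R M$. The point of adjoining the $t_i$ is that the elements $t_i^2$ are genuine degree-two non-zerodivisors available to feed into Lemma~\ref{lem:deg2}. However, merely killing the $t_i^2$ only increases the defect without reflecting the depth of $M$. The correct move is to combine the linear $M$-regular sequence $x_1,\dots,x_d$ in $R$ with the $t_i$ to form degree-two regular elements $u_i=x_i t_i$ (or, more robustly, $u_i = x_i + t_i^2$-type combinations) on $N$, and then apply Lemma~\ref{lem:deg2} successively.

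The key steps, in order, are then as follows. Establish that $u_1,\dots,u_d$ is an $N$-regular sequence in $S$, each $u_i$ of degree $\ge 2$, using that $x_1,\dots,x_d$ is $M$-regular and that the $t_i$ are transcendental over $R$. Then apply Lemma~\ref{lem:deg2} once for each $u_i$, obtaining
\[
\lind_S\!\big(N/(u_1,\dots,u_d)N\big) = \lind_S N + d = \lind_R M + \depth_R M.
\]
Since the left-hand side is the linearity defect of a single finitely generated graded $S$-module, it is bounded above by $\glind S$, and by Corollary~\ref{cor:poly} (iterated) together with the accompanying bound $\glind R[x]=\glind R+1$ — wait, that would overshoot — so instead I would invoke that $N/(u_1,\dots,u_d)N$ is annihilated by the $t_i$ up to the regular-sequence structure, forcing it to be, or restrict to, an $R$-module, giving a bound by $\glind R$ directly. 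The final statement $\glind R\ge\depth R$ is the special case $M=R$, where $\lind_R R=0$.

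The main obstacle will be the degree-one versus degree-two mismatch in Lemma~\ref{lem:deg2}: I must produce a genuinely degree-$\ge 2$ regular sequence of the right length whose quotient still carries linearity-defect information that transfers back to $\glind R$ (rather than to $\glind S$, which is strictly larger). Handling this transfer cleanly — ensuring the final quotient module is accounted for by $\glind R$ and not merely by the inflated $\glind$ of the polynomial extension — is the delicate point, and is presumably where the authors' actual argument inserts a careful choice of regular elements combining the $M$-regular linear sequence with the adjoined variables.
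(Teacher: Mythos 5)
Your overall strategy --- iterate Lemma~\ref{lem:deg2} to raise the linearity defect by $\depth_R M$ and thereby witness the bound on $\glind R$ --- is the right one, and is what the paper does (phrased as an induction on depth). But your execution has a genuine gap, and it sits exactly at the point you flag as delicate. The detour through $S=R[t_1,\dots,t_d]$ creates a transfer problem that your argument does not solve: the module $N/(u_1,\dots,u_d)N$ with $u_i=x_it_i$ is a genuine $S$-module, not an $R$-module --- it is certainly not annihilated by the $t_i$ --- so the only bound available is $\lind_S\bigl(N/(u_1,\dots,u_d)N\bigr)\le\glind S$, and since $\glind S=\glind R+d$ this yields nothing. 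There is no mechanism for converting a bound on the linearity defect of an $S$-module into a bound on $\glind R$ unless the module actually descends to $R$, and yours does not; the sentence ``forcing it to be, or restrict to, an $R$-module'' is where the proof breaks.

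The fix is that the polynomial extension is unnecessary: the degree-one versus degree-two mismatch you are trying to circumvent is not an obstacle. If $\depth_R M>0$ then $\mm\notin\Ass_R M$, so $\mm^2$ is not contained in any associated prime of $M$ and homogeneous prime avoidance produces an $M$-regular homogeneous element $x\in R$ of degree $\ge 2$ (if $k$ is infinite, simply take the square of a linear $M$-regular form). Quotienting by such an $x$ still drops depth by exactly one, so Lemma~\ref{lem:deg2} gives $\lind_R(M/xM)=\lind_R M+1$ with $M/xM$ still a finitely generated graded $R$-module. Induction on depth (equivalently, iterating $d=\depth_R M$ times) then yields $\lind_R M+\depth_R M=\lind_R\bigl(M/(x_1,\dots,x_d)M\bigr)\le\glind R$, which is the paper's argument. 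Your reduction of the last statement to the case $M=R$ is fine.
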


\begin{proof}
We proceed by induction on $\depth M$. The base case $\depth_{R} M=0$ is trivial. When $M$ has positive depth, choose an $M$-regular element $x\in R$ of degree $\ge 2$. Then Lemma \ref{lem:deg2} gives the first equality below
\begin{align*}
\lind_R M+\depth M 
&= \lind_R(M/xM) - 1 + \depth_{R}(M/xM) + 1 \\
&= \lind_R (M/xM)  + \depth_{R}(M/xM)  \\
& \le \glind R
\end{align*}
and the last one is the induction hypothesis. 
\end{proof}

\begin{cor}
Let $R$ be a Cohen-Macaulay standard graded $k$-algebra satisfying $\reg R=1$. Then $R$ has minimal multiplicity and $\glind R=\dim R$.

In particular, if $f$ is a non-zero quadratic form in $k[x_1,\ldots,x_n]$, then $\glind (k[\xb]/(f)) = n-1.$
\end{cor}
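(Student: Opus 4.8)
The plan is to prove the two assertions of the general statement separately and then read off the hypersurface case. Throughout I would first replace $k$ by an infinite (indeed algebraically closed) extension, which is harmless by Lemma~\ref{lem:field-extension}(3), since $\glind$, the multiplicity, and the regularity are all unchanged under such a flat base change. For the minimal multiplicity claim I would use the identity $\reg R=\deg h_R(z)$, valid for Cohen-Macaulay standard graded algebras, where $h_R(z)$ is the $h$-polynomial. Hence $\reg R=1$ forces $h_R(z)=1+cz$ with $c=\codim R\ge 1$, and therefore $e(R)=h_R(1)=c+1=\codim R+1$, which is exactly minimal multiplicity.

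For the equality $\glind R=\dim R$ I would establish the two inequalities. The lower bound is immediate from Proposition~\ref{prop:glind-inequality}: taking $M=R$ gives $\glind R\ge\depth R=\dim R$, using that $R$ is Cohen-Macaulay. For the upper bound $\glind R\le\dim R$, set $d=\dim R$ and choose a linear system of parameters $\ell_1,\dots,\ell_d$ (available since $k$ is infinite). As $R$ is Cohen-Macaulay this is a regular sequence, and the Artinian reduction $A=R/(\ell_1,\dots,\ell_d)$ has Hilbert series $h_R(z)=1+cz$. In particular $\mm^2 A=0$, so $A\cong k[y_1,\dots,y_c]/(y_1,\dots,y_c)^2$. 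Every differential in a minimal free resolution over such an $A$ has entries in $A_1$ and is thus linear, so $\linp^A F=F$ for every minimal resolution $F$; hence $\lind_A M=0$ for all $M$ and $\glind A=0$, exactly as in Example~\ref{bad-Koszul}. I would then apply Theorem~\ref{thm:glind-flatdim} to the quotient map $R\to A$: here $\flatdim_R A=d$ via the Koszul complex on $\ell_1,\dots,\ell_d$, the ring $A$ is absolutely Koszul with $\glind A=0$, and $\dim(A/\mm A)=\dim k=0$. The theorem yields
\[
\glind R\le\glind A+\flatdim_R A+\dim(A/\mm A)=0+d+0=d,
\]
completing the proof that $\glind R=\dim R$. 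For the hypersurface case, note that $R=k[x_1,\dots,x_n]/(f)$ with $f$ a non-zero quadric is a complete intersection, hence Cohen-Macaulay of dimension $n-1$, with $h$-polynomial $1+z$ and thus $\reg R=1$; the general statement then gives $\glind R=n-1$.

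The main obstacle, and the point where a naive approach stumbles, is the upper bound. One would like to pass to the Artinian reduction, but the reduction is by linear (degree-one) forms, and Lemma~\ref{lem:deg2} controls linearity defect only for regular elements of degree $\ge 2$; there is no direct analogue in degree one. The device that sidesteps this is to view the reduction not as an iterated quotient inside the module category but as a single finite-flat-dimension algebra map $R\to A$ and to invoke Theorem~\ref{thm:glind-flatdim}. Verifying $\glind A=0$ and computing the two correction terms $\flatdim_R A$ and $\dim(A/\mm A)$ is then routine. A minor point to confirm is the identity $\reg R=\deg h_R$ in the generality needed, which is standard for Cohen-Macaulay graded algebras via the $a$-invariant.
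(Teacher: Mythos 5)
Your proof is correct and follows essentially the same route as the paper: both arguments get the lower bound from Proposition~\ref{prop:glind-inequality} and the upper bound by passing to an Artinian reduction via Theorem~\ref{thm:glind-flatdim}, then observing that the reduction is $k[y_1,\dots,y_c]/(y_1,\dots,y_c)^2$ and hence has global linearity defect zero. The only cosmetic differences are that you kill the whole linear system of parameters in one application of Theorem~\ref{thm:glind-flatdim} rather than one form at a time, and you identify the Artinian reduction via $\reg R=\deg h_R$ instead of the paper's Hilbert-series computation from the $2$-linear resolution of the defining ideal.
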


\begin{proof}
We may assume $k$ is infinite; see Lemma~\ref{lem:field-extension}. Given Proposition~\ref{prop:glind-inequality}, it remains to show $\glind R \le \dim R$. This was claimed in \cite[Remark, p.~ 21]{HIy}; we include a proof for the benefit of the second author who could not recollect the details of the argument. Note that $\glind R \le \glind (R/Rx) +1$ if $x\in R_1$ is $R$-regular; this is by Theorem~\ref{thm:glind-flatdim}. We may thus reduce to the case when $\dim R=0$. Note that the regularity of $R$ and its multiplicity remain unchanged.

Let $R=P/I$ where $P$ is a polynomial ring and $I\subseteq \mm^2$, where $\mm=P_{\geqslant 1}$.  Since $I$ has $2$-linear resolution and $\projdim_PR=n$, there is an equality of Hilbert series
\[
H_R(z)(1-z)^n=1-\beta_1z^2+\cdots+(-1)^{n}\beta_nz^{n+1},
\]
where $\beta_i\neq 0$ is the $i$th Betti number of $R$ over $P$. Therefore by comparing degrees of the polynomials, $R_i=0$ for $i\ge 2$, so $I=\mm^2$. Then every $R$-module is Koszul, so $\glind R=0$.

The last statement holds as $k[\xb]/(f)$ is Cohen-Macaulay of dimension $n-1$.
\end{proof}

\begin{thm}
If $R$ is defined by monomial relations, then $\glind R \ge \dim R$.
\end{thm}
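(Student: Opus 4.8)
The plan is to apply Proposition~\ref{prop:glind-inequality} to a carefully chosen module of large depth. That proposition supplies the inequality $\lind_R M + \depth_R M \le \glind R$ for every finitely generated graded $R$-module $M$, and since $\lind_R M \ge 0$ whenever $M\ne 0$, it suffices to exhibit a nonzero module $M$ with $\depth_R M \ge \dim R$. Thus the whole problem reduces to producing a high-depth module, and the monomial structure is exactly what lets us do this canonically.

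Write $R=S/I$ with $I\subseteq S=k[x_1,\dots,x_n]$ a monomial ideal. The first step is to use the monomial structure to locate a top-dimensional minimal prime of a convenient shape. Since $\sqrt I$ is a squarefree monomial ideal, every minimal prime of $I$ is of the form $P_F=(x_i : i\notin F)$ for some subset $F\subseteq\{1,\dots,n\}$, and $\dim(S/P_F)=|F|$. As $\dim R$ equals the maximum of $\dim(S/P)$ over the minimal primes $P$ of $I$, I would choose $F$ with $|F|=d:=\dim R$ and set $P=P_F$.

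The second step is to compute the depth of $M:=R/P$ as an $R$-module. As a ring, $M\cong S/P_F=k[x_i : i\in F]$ is a polynomial ring in $d$ variables, hence Cohen-Macaulay of depth $d$. Because depth depends only on the action of the homogeneous maximal ideal and $R\twoheadrightarrow M$ is a surjection of standard graded algebras, one has $\depth_R M=\depth_M M=d$; concretely, the variables $\{x_i : i\in F\}$ form an $M$-regular sequence of length $d$ sitting inside $\mm$. Combining these observations gives
\[
\glind R \ \ge\ \lind_R M + \depth_R M \ \ge\ 0 + d \ =\ \dim R,
\]
as desired.

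There is no genuine obstacle here; the only point requiring care is the identification $\depth_R(R/P)=\dim R$, which rests on two facts: that a top-dimensional minimal prime of a monomial ideal cuts out a polynomial subring (guaranteeing the Cohen-Macaulay property, so that depth reaches the dimension), and that depth is invariant under restriction of scalars along the surjection $R\to R/P$. Everything else is a direct appeal to Proposition~\ref{prop:glind-inequality}.
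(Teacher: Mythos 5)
Your proof is correct, and it takes a genuinely different (and leaner) route than the paper's. You produce a maximal Cohen--Macaulay $R$-module --- the polynomial ring $R/P$ cut out by a top-dimensional minimal prime, which for a monomial ideal is automatically generated by variables --- and feed it into Proposition~\ref{prop:glind-inequality}, using only that $\lind_R(R/P)\ge 0$ and $\depth_R(R/P)=\dim R$. The paper starts from essentially the same subring $S=R/P$ but instead exhibits a module whose linearity defect \emph{equals} $\dim R$, namely $S/(x_{s+1}^2,\dots,x_n^2)$, computing the defect via the retract result Proposition~\ref{prop:retracts} together with the strong Koszulness of quadratic monomial rings (which yields $\lind_R S=0$) and Lemma~\ref{lem:deg2}; this forces a preliminary reduction to the case of quadratic $I$. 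The two arguments share their engine --- Lemma~\ref{lem:deg2} is what drives the induction inside Proposition~\ref{prop:glind-inequality}, and unwinding that induction along the regular sequence $x_i^2$, $i\in F$, recovers exactly the paper's module $R/J$ --- but yours dispenses with the quadratic reduction, the strong Koszul input, and the retract argument, at the cost of getting only a lower bound for $\glind R$ rather than a module realizing the value $\dim R$. Your approach also makes transparent the remark the paper places after its closing question: $\glind R\ge\dim R$ holds whenever $R$ admits a maximal Cohen--Macaulay module, and your observation is precisely that rings defined by monomials always do. The only delicate point, which you handle correctly, is that $\depth_R(R/P)$ may be computed over $R/P$ itself, where it equals $\dim R$ because $R/P$ is a polynomial ring.
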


\begin{proof}
Suppose $R=P/I$ where $P=k[x_1,\ldots,x_n]$ is a polynomial ring and $I$ is a monomial ideal; we may assume it is quadratic, for else $\lind_{R}k$ is infinite. Reordering the variables if necessary we may assume that in the primary decomposition of $I$ the component of minimal height is $(x_1^2,\ldots,x_q^2,x_{q+1},\ldots,x_s)$, where $s=n-\dim R$.

We claim that $\lind_R (R/J) = \dim R$ where $J=(x_{1},\dots,x_{s},x_{s+1}^2,\ldots,x_n^2)$. 

Indeed, set $S=k[x_{s+1},\ldots,x_n]$ and let $R\to S$ be the canonical surjection. Note that the composition of the inclusion $S\to R$ with the map $R\to S$ is the identity on $S$. Moreover $\lind_R S=0$, since $R$ is strongly Koszul. Therefore, noting that the action of $R$ on $R/J$ factors through $S$, from Proposition \ref{prop:retracts} one gets the first equality below:
\[
\lind_R (R/J) =\lind_S(R/J) = \lind_S (S/(x_{s+1}^2,\ldots,x_n^2))=n-s\,.
\]
The last equality is a direct computation; one can get it from Lemma~\ref{lem:deg2}.
\end{proof}

Motivated by the last result, we ask

\begin{quest}
Does the inequality $\glind R \ge \dim R$ always hold?
\end{quest}

By Proposition \ref{prop:glind-inequality} this is the case for Cohen-Macaulay rings; more generally, it holds when $R$ has a maximal Cohen-Macaulay module, and in particular when $\dim R\le 2$.

\end{document}